\documentclass[11pt,reqno]{amsart}

\usepackage[utf8]{inputenc}
\usepackage{graphicx}
\usepackage[dvipsnames]{xcolor}
\usepackage[colorlinks=true,linkcolor=BrickRed,citecolor=blue]{hyperref}
\usepackage{caption}
\usepackage{subcaption}
\usepackage{float}
\usepackage[width=150mm,top=35mm,bottom=25mm,bindingoffset=6mm]{geometry}
\usepackage{fancyhdr}
\usepackage{setspace}
\usepackage{amsmath}
\usepackage{amsthm}
\usepackage{amssymb}
\usepackage{amsfonts,mathrsfs}
\usepackage{dsfont}
\usepackage[shortlabels]{enumitem}
\usepackage{orcidlink}
\usepackage{mathtools}

\usepackage{tikz}
\usetikzlibrary{arrows.meta,positioning,calc,decorations.pathreplacing}

\newcommand{\vect}[1]{\boldsymbol{#1}} 

\renewcommand{\epsilon}{\varepsilon}
\newcommand{\eps}{\epsilon}

\newcommand{\E}{\mathbb{E}}
\newcommand{\Prob}{\mathbb{P}}
\renewcommand{\P}{\mathbb{P}}
\newcommand{\Z}{\mathbb{Z}}
\newcommand{\N}{\mathbb{N}}
\newcommand{\R}{\mathbb{R}}

\newcommand{\ceil}[1]{\lceil #1 \rceil}
\newcommand{\abs}[1]{|#1|}
\newcommand{\Or}{O}
\newcommand{\norm}[1]{\left\lVert#1\right\rVert}
\newcommand{\e}{\mathrm{e}}
\newcommand{\1}{\mathds{1}}
\newcommand{\dd}{\mathrm d} 
\newcommand{\B}{\mathfrak{B}}
\newcommand{\F}{\mathscr{F}}

\newtheorem{definition}{Definition}[section]

\newtheorem{thm}[definition]{Theorem}

\newtheorem{lem}[definition]{Lemma}
\newtheorem{prop}[definition]{Proposition}
\newtheorem{assump}[definition]{Assumption}

\theoremstyle{remark}
\newtheorem{rem}[definition]{Remark}

\numberwithin{equation}{section}  

\begin{document}

\title{Markov renewal theory for transfer operators and point processes on the line}
\date\today
\author{Yoon Jun Chan\orcidlink{0000-0002-7071-2885}}
\author{Markus Heydenreich\orcidlink{0000-0002-3749-7431}}
\address{Universität Augsburg, Department of Mathematics, 86135 Augsburg, Germany}
\email{yoon.chan@uni-a.de, markus.heydenreich@uni-a.de}

\author{Sabine Jansen\orcidlink{0000-0002-9611-0356}}
\address{Universität München, Mathematisches Institut, Theresienstr.\ 39, 80333 München, Germany}
\email{jansen@math.lmu.de}

\begin{abstract}
    We prove exponential decay of pair correlations for 1D stationary point processes when spacings satisfy a Markov condition, geometric ergodicity, and a condition on exponential moments. The conditions are phrased for stationary sequences of spacings (intervals between consecutive points) whose law comes from the Palm distribution of the point process. The key technical ingredient is a Markov renewal theorem with exponential convergence rate. The proofs combine classical regeneration techniques with the notion of geometric ergodicity for Markov chains with general state space. We apply the result to two models from statistical mechanics: (1) Gibbs point processes with a hard-core, finite-range pair potentials and (2) a harmonic chain of atoms, related to an autoregressive Gaussian process.
\end{abstract}

\keywords{Markov renewal theory, geometric ergodicity, transfer operator, Palm theory, Gibbs measures, harmonic chain}
\subjclass[2020]{60G55, 60K05, 82B21}

\date{July 27, 2026}

\maketitle

\vskip2em

\section{Introduction}\label{sec:intro}

Palm calculus is of fundamental importance in the theory of point processes. For point processes on the line, each point may represent an event -- e.g., the arrival of a new customer in queuing theory or the failure of a light bulb in renewal theory. Palm theory establishes a relation between stationary point processes $\Phi$ and stationary sequences $(Z_n)_{n\in \Z}$ of non-negative variables, roughly the waiting times between consecutive events for the point process conditioned on having a point at $t=0$. Independent waiting times give rise to renewal point processes. 

The present article is dedicated to a simple question: If the waiting times are not independent but the sequence of waiting times is stationary and has good mixing properties, does the point process inherit mixing properties of the same type? We investigate this question for exponential mixing and ask, more concretely: if 
\begin{equation}\label{eq:covspacdec}
	\Bigl| \mathrm{Cov}(Z_n, Z_{n+k})\Bigr| \leq C \e^{- ck }
\end{equation} 
for all $n\in \Z$ and uniform constants $c,C>0$, can we infer that the point process has exponential decay as 
\begin{equation}\label{eq:phidec}
	\Bigl| \mathrm{Cov}(\Phi(A),\Phi(B+t)) \Bigr| \leq C' \e^{- c't} 
\end{equation} 
for bounded $A,B\subset \R$? Put differently, we ask for a quantitative version of the cross-ergodic theorem from Palm theory \cite[Section 7.6]{Bremaud}. The latter states that the sequence $(Z_n)_{n\in \Z}$ is ergodic if and only if $\Phi$ is, but does not say anything about rates of mixing. 

Our principal motivation comes from statistical mechanics for particles on the line. Points of the point process represent particle locations and the $Z_n$'s represent spacings between nearest neighbours. There is a treasure-trove of techniques to prove exponential decay of correlations for sequences with Gibbs distribution but the situation for Gibbs point processes is less satisfying. A thorough understanding of how to go from decay of correlations for sequences to decay of correlations for point processes may help, in the future, to adapt techniques from one setting to the other.

Renewal theory indicates that the answer to our question is in general negative: In the simplest case of independent spacings, exponential decay occurs if and only if spacings have finite exponential moments. In discrete time, this is the content of Kendall's renewal theorem \cite[Theorem 15.1.1]{MeynTweedie}; for continuous time, see Nummelin and Tuominen \cite[Theorem 4.3]{NummelinTuominen} (that finite exponential moments are sufficient is already part of Stone's theorem \cite{Stone1966}). It is therefore clear that we have to impose additional conditions on the spacings. 

We answer the question formulated above under more restrictive conditions on $(Z_n)_{n\in \Z}$. Crucially we impose that the sequence is a $k$-step Markov chain for some finite $k\in \N$, i.e., the sequence of blocks $(Z_{n-k+1},\ldots, Z_n)$, $n\in \Z$, is Markov.  We allow for negative $Z_n$'s in order to cover the nearest-neighbour harmonic chain of atoms from statistical mechanics (see Section \ref{sec:harmonic-chain}). 
Our main result, Theorem \ref{thm:NewMain}, is succinctly summarised as follows: If the $k$-step Markov chain is geometrically ergodic and if the spacings satisfy $\limsup_{n\to \infty} \frac 1 n \log \E[\exp( t(Z_1+\cdots + Z_n))]<\infty$ for all sufficiently small $t$, then indeed~\eqref{eq:phidec} holds true. Geometric ergodicity strengthens condition~\eqref{eq:covspacdec} and the condition on exponential moments naturally generalises the condition $\E[\exp( t Z_1)]<\infty$ for i.i.d.\ spacings. (The theorem needs additional technical conditions, notably a kind of absolute continuity condition, known in renewal theory as \emph{spread out}, see e.g.\ \cite{AlsmeyerHoefsMRT}.) Thus, we identify sufficient conditions under which the answer to our question is positive.

The key technical ingredient is a Markov renewal theorem with exponential convergence rate (Theorem \ref{thm:renewal}). 
Markov renewal theorem is the analogue of the renewal theorem for a Markov renewal process, where the increments are no longer i.i.d.\ but modulated by a Markov chain.
Exponential convergence was established by Alsemeyer and Hoefs \cite{AlsmeyerHoefsMRT} in the case where the spacings form $m$-\emph{block factors} for some $m\in\N$.
Their main tool was the \emph{splitting (or regeneration) technique}, which was discovered independently by Nummelin \cite{Nummelin78} and Athreya and Ney \cite{AthreyaNey78}.
It constructs an embedded renewal process for the Markov chain and reduces the analysis to the embedded renewal process, and admits lifts of established results for Markov chains on a countable state space to Markov chains on uncountable state spaces.
The main challenge of proving Theorem \ref{thm:renewal} via the splitting technique is to identify moment conditions on the increments $(Z_n)_n$ \cite{AlsmeyerHoefsMRT}.
We were able to translate the moment condition for a renewal process \cite{Stone1966} to a suitable moment condition (Assumption \ref{MAssu:ExpMoment}) for a Markov renewal process by combining the \emph{cyclic decomposition} with Lemma \ref{lem:tau_geom_tail}, which states that the regeneration introduced by the splitting technique happens exponentially fast.
Fuh \cite{Fuh} proved exponential convergence for a more general class of models via an analytical approach.

We apply our main result to two classes of models and obtain correlation decay in the sense of \eqref{eq:phidec}.
The first is one-dimensional Gibbs point processes with a finite-range pair potential with hard core.
Gibbs point processes constitute a central object in modern statistical mechanics and remain an active area of research (e.g., \cite{Dereudre2019,Georgii+2011,Ruelle-book} and references therein).
The second is harmonic chains of atoms, often studied in elasticity theory, see e.g.\ \cite{AllmanBetzHairer, AurzadaBetzLifshits}.
These are a finite chains with one end fixed, where atoms interact with their neighbours and next-nearest neighbours via a quadratic potential.
They also form a first-order autoregressive process with Gaussian noise.
The Palm inversion formula for one-dimensional systems is usually formulated in the literature only for the case where $Z_n>0$ almost surely.
The study of this autoregressive process demonstrates that our method is not restricted to such cases.
A similar approach has been carried out by Athreya, Tweedie and Vere-Jones \cite{ATVJ} to prove the Markov renewal theory without convergence rate for the Wold process, where the interparticle distances form a Markov chain, and later extended by Chong \cite{FSChong} to the case of $2$-step Markov chains.

The article is organised as follows.
In Section \ref{sec:results} we explain the set-up of our model and state our main result and we apply it to the two examples mentioned above in Section \ref{sec:StatMech}.
We summarise the splitting technique and discuss the implications of geometric ergodicity in Section \ref{sec:splitting}.
Utilising the embedded renewal process, we prove our first main result in Section \ref{sec:ProofNewMain}.
We explain in detail the applicability of our main results on the two examples in Section \ref{sec:ProofStatMech} and Section \ref{sec:AR1}.

\section{Main result}\label{sec:results}
Let $(Z_n)_{n\in \Z}$ be a stationary and ergodic sequence of real-valued random variables. 
``Stationary'' means $(Z_{n-1})_{n\in \Z}$ is equal in distribution to $(Z_n)_{n\in \Z}$, and ``ergodic'' means that the $\sigma$-algebra of events invariant under shift of indices is trivial.
We assume throughout that $\P(Z_i =0) = 0$  and that the $Z_i$'s have a finite, strictly positive expected value. Let $(X_n)_{n\in \Z}$ be the sequence defined by 
\begin{equation}
    \label{eq:defX}
    X_{0} = 0, \  X_n - X_{n-1} = Z_{n}, \quad n\in \mathbb{Z}.
\end{equation}
We would like to describe the configuration of particles whose positions are $(X_n)$ as a point process in the sense of \cite[Chapter 9]{DaleyJones032}.
We recall the following result on the existence and uniqueness of the translation invariant point process constructed from $(Z_n)_{n\in \Z}$.
\begin{lem} 
	There exists a stationary point process $\Phi$ with intensity $1/\E[Z_1]$ and Palm version 
	\[
		\Phi^0 = \sum_{n\in \Z}\delta_{X_n}. 
	\]
	The law of $\Phi$ is uniquely determined.
\end{lem}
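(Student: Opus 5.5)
The plan is to construct $\Phi$ explicitly by Palm inversion, that is, by the classical cycle formula.  Then verify that it is stationary with the right intensity and Palm version, and finally get uniqueness from the inversion formula.  I would write $\theta$ for the shift of indices acting on sequences $z=(z_n)_{n\in\Z}$ and $\mu(z)=\sum_n \delta_{x_n(z)}$ for the configuration built from $z$ via \eqref{eq:defX}.

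Since $\E[Z_1]\in(0,\infty)$ and the sequence is ergodic, Birkhoff's theorem gives $X_n/n\to\E[Z_1]>0$ almost surely as $n\to\pm\infty$.  Hence $X_n\to\pm\infty$, and only finitely many $X_n$ lie in any bounded set, so $\Phi^0$ is a.s.\ locally finite.  Because $Z_n$ may be negative, the points need not be ordered, which is why I would not use the classical ordered-cycle picture.  Instead I would define the law of $\Phi$ by
\[
	\E[f(\Phi)] := \frac{1}{\E[Z_1]}\,\E\Bigl[\int_0^{Z_1} f\bigl(\mu(Z)-t\bigr)\,\dd t\Bigr],
\]
with the signed integral convention $\int_0^{a} = -\int_a^0$ for $a<0$.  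Taking $f\equiv 1$ gives total mass one.  The main obstacle is positivity, since the integrand has negative mass on $\{Z_1<0\}$.

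To establish positivity together with stationarity, I would rewrite the formula as an ergodic average.  For nonnegative bounded $f$, stationarity of $(Z_n)$ gives
\[
	\frac1N\sum_{n=0}^{N-1}\int_{X_n}^{X_{n+1}} f\bigl(\mu(Z)-t\bigr)\,\dd t \;\to\; \E\Bigl[\int_0^{Z_1} f(\mu(Z)-t)\,\dd t\Bigr]
\]
almost surely.  This holds because the $n$-th term equals the $0$-th term evaluated at $\theta^n Z$, using $\mu(\theta^n Z)=\mu(Z)-X_n$.  The left side telescopes to $\frac1N\int_{0}^{X_N} f(\mu(Z)-t)\,\dd t$, which is nonnegative for large $N$ because $X_N\to+\infty$.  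So the limit is nonnegative, and the defined measure is a probability.

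The same representation gives stationarity.  Replacing $f$ by $f(\cdot - s)$ shifts the integration window to $[s,X_N+s]$, which changes the average by $O(s\|f\|_\infty/N)\to0$.  The intensity follows from $f(\varphi)=\varphi([0,1])$ by a Fubini computation: each point $X_m$ contributes Lebesgue measure one to $\{t\in[0,X_N]: X_m-t\in[0,1]\}$ up to boundary terms.  After normalising by $X_N/N\to\E[Z_1]$, this yields intensity $1/\E[Z_1]$.  To check that the Palm version is $\Phi^0$, I would use the Campbell–Palm formula $\E\sum_{x\in\Phi\cap[0,1]} g(\Phi-x)=\lambda\,\E[g(\Phi^0)]$ and compute the left side by the same ergodic-average argument.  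Each point $X_m$ sees the configuration $\mu(Z)-X_m=\mu(\theta^m Z)$, and averaging over $m$ gives $\E[g(\mu(Z))]$ by Birkhoff again.

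For uniqueness, any stationary $\Phi'$ with intensity $\lambda$ and Palm distribution equal to that of $\Phi^0$ satisfies the Palm inversion formula (e.g.\ \cite[Chapter 13]{DaleyJones032} or \cite{Bremaud}).  With the measurable choice $h(\varphi,t)$ equal to the indicator of the event that $0$ is the point of $\varphi$ closest to the right of $-t$, this formula expresses $\E f(\Phi')$ purely through $\lambda$ and the Palm law.  Hence the law of $\Phi'$ is determined, noting that $\lambda=1/\E[Z_1]$ is itself forced by the Palm law via the exchange formula.  The delicate point throughout is the signed cycles; the averaging trick above is how I would handle them.
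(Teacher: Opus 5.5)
Your proof is correct, but it takes a genuinely different route from the paper. The paper gives no construction. It cites standard Palm inversion (\cite[Theorem 13.3.I]{DaleyJones032}, \cite[Corollary 6.2.12]{baccelli}) for $Z_n>0$, and Berbee's theory of random walks with stationary increments \cite{Berbee} for signed spacings.

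When $Z_1>0$, your signed cycle formula reduces to the classical Ryll-Nardzewski--Slivnyak inversion formula. For signed spacings, the telescoping identity $\sum_{n<N}F(\theta^n Z)=\int_0^{X_N} f(\mu(Z)-s)\,\dd s$ combined with Birkhoff replaces the appeal to Berbee with a self-contained argument. It shows transparently that ergodicity and $\E[Z_1]>0$ are exactly what make the signed cycle measure nonnegative, through $X_N\to+\infty$. The same device also gives stationarity, the intensity and the Palm law in one stroke.

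The citation route is shorter and comes with the full correspondence between increment sequences and point processes. Your construction does not need that correspondence. What it buys instead is an explicit, checkable formula.

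Two technical points need tidying before this is airtight.

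\begin{itemize}
\item $\varphi([0,1])$ and $\sum_{x\in\varphi\cap[0,1]}g(\varphi-x)$ are unbounded. You cannot apply Birkhoff directly to the signed integrand, because its integrability is not clear a priori. Truncate at level $K$. The upper bound then follows by monotone convergence. For the lower bound, control the discarded mass by the ergodic average of $\1\{\mu(\theta^m Z)([-1,1])>K\}$, whose limit $\P(\mu(Z)([-1,1])>K)$ vanishes as $K\to\infty$. The boundary terms in these averages are $o(N)$ because $X_m/m\to\E[Z_1]$ as $m\to\pm\infty$.
\item Signed spacings allow coincidences $X_n=X_m$ with $|n-m|\geq 2$. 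So in your uniqueness argument the weight $h$ must be divided by the multiplicity $\varphi(\{0\})$. Otherwise the inversion formula counts the first point to the right with multiplicity.
\end{itemize}
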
 

\begin{proof}
	For non-negative spacings $(Z_n)$, this follows directly from standard Palm theory, see e.g.\ \cite[Theorem 13.3.I]{DaleyJones032} and \cite[Corollary 6.2.12]{baccelli}.
	For the general case, Berbee studied exactly the random walk with stationary increments \cite[Section 2.1]{Berbee}.
	In particular, he showed that the one-to-one correspondence between the stationary sequence of non-negative increments and a stationary point process extends to a stationary and ergodic sequence of increments.
\end{proof}

We now state five assumptions,  which we need to formulate our main result.
\begin{assump}[$k$-step Markov] \label{ass:markov}
	The process $(Z_n)_{n\in \Z}$ is $k$-step Markov, i.e., the chain 
        \begin{equation}
            \label{eq:defM}
            M_n = (Z_{n-k+2},\ldots, Z_{n+1})
        \end{equation}
	is a Markov chain for some transition kernel $P\colon\R^k\times \B_{\R^k}\to [0,1]$ with Borel $\sigma$-algebra $\B_{\R^k}$.
\end{assump}

\begin{figure}
    \centering
    \begin{tikzpicture}[x=0.8\linewidth,y=5cm,>=Stealth]

        \draw[->] (0.1,0.7) -- (0.9,0.7);
    
        \foreach \PP in {-3,-2,-1,1,2,3}
            \draw (0.5+\PP*0.1,0.72) -- (0.5+0.1*\PP,0.68) node[below=2pt] {$X_{\PP}$};
    
        \draw (0.5,0.72) -- (0.5,0.68) node[below=2pt] {$X_{0}=0$};
    
        \foreach \PP in {-2,-1,0,1,2,3}
            \draw[->] (0.405+\PP*0.1,0.77) -- (0.495+\PP*0.1,0.77) node[midway,above] {$Z_{\PP}$};

        \node at (0.15,0.77) {$\cdots$};
        \node at (0.85,0.77) {$\cdots$};

        \draw[decorate, decoration={brace,amplitude=10pt}] (0.2,1.05)--(0.49,1.05) node[right,above=10pt] {$M_{-1}=(Z_{-2},Z_{-1},Z_0)$};
        \draw[decorate, decoration={brace,amplitude=10pt}] (0.3,0.85)--(0.59,0.85) node[above=10pt] {$M_{0}=(Z_{-1},Z_0,Z_1)$};
        
    \end{tikzpicture}
    \caption{An example configuration of the Markov random walk defined by \eqref{ass:markov} and \eqref{eq:defX} with $k=3$. Notice that $M_i$ and $M_{i+1}$ have overlapping components.}
    \label{fig:inversion}
\end{figure}

The index choice in \eqref{eq:defM} is explained in Remark \ref{rem:indexM}. 
Let $\pi$ be the distribution of $M_0$. As $(Z_n)_{n\in \Z}$ is stationary, $(M_n)_{n\in \Z}$ is stationary as well and $\pi$ is an invariant measure for the chain, $\pi P = \pi$. 

\begin{assump}[Geometric ergodicity] \label{MAssu:RegularityMC}
	The chain $(M_n)_{n\in\N_0}$ is \emph{geometrically ergodic}, i.e., 
	there exists $q\in (0,1)$ and, for every $\vect z \in \R^k$, a constant $C(\vect z)<\infty$ such that 
	\[
		|| P^n(\vect z, \cdot) - \pi ||_\mathsf{TV} \leq C(\vect z) \, q^n
	\] 
	for all $\vect z\in \R^k$ and all $n\in \N$. 
\end{assump} 

The next definition is borrowed from Markov renewal theory. Allowing for initial laws for $M_0$ different from $\pi$ and focusing on $n\geq 0$, the chain $(M_n,X_{n})_{n\in \N_0}$ (with $X_{0}=0$) is an example of a  \emph{Markov random walk}, see e.g.\ \cite{Alsmeyer97} for a precise definition. The Markov random walk has \emph{positive drift} if  $\E[Z_i]>0$ . For i.i.d.\ spacings $(Z_n)$, the chain $(X_{n})_{n\in \N_0}$ is a standard random walk.
We say that the Markov random walk is spread out if there exists $n_0\in \N$ such that $X_{n_0}$ has an absolutely continuous component with respect to Lebesgue measure. For Markov random walks, a similar condition is imposed on $(M_n, X_{n})_{n\in \N_0}$, taking into account the driving chain. See Alsmeyer and Hoefs \cite{AlsmeyerHoefsMRT}, and Niemi and Nummelin \cite{NiemiNummelin}. 
\begin{assump}[Spread out] \label{Massu:spread_out}
    The Markov random walk $(M_n,X_{n})_{n\in \N_0}$ is spread out, i.e., 
	there exists $n_0\in \N$ such that for all initial values $\vect z$ of $M_0$ the distribution of $(M_{n_0},X_{n_0})$  under $\P_{\vect z}$ ($(M_n)_{n\geq 0}$ started in $\vect z$) is non-singular with respect to $\pi\otimes \mathrm{Leb}$, where $\mathrm{Leb}$ is Lebesgue measure on $\R$. 
\end{assump} 
A sufficient condition is that $P(\vect z, \cdot)$ is absolutely continuous with respect to Lebesgue measure on $\R^k$. This simpler sufficient condition is satisfied in all of our concrete examples. 
We also need that the growth rate of the exponential moments of $Z_0+\cdots+Z_n$ is finite in $n$.
\begin{assump}[Exponential moments] \label{MAssu:ExpMoment} 
	There exists $\eps>0$ such that for all $t\in (-\eps,\eps)$, the stationary sequence $(Z_n)_{n\in \Z}$ satisfies 
	\begin{equation} \label{eq:kMGF}
		\limsup_{n\to \infty}\frac 1n \log \E\bigl[\e^{t (Z_1+\cdots + Z_n)}\bigr] <\infty. 
	\end{equation}  	
\end{assump} 

Finally, in the case where $Z_n$ might be negative, we assume reversibility.
\begin{assump}[Positive or reversible] \label{MAssu:Inversion}
	The stationary sequence $(Z_n)$ satisfies
	\begin{itemize} 
		\item $Z_n>0$ almost surely, or 
		\item the sequence is reversible, i.e., $ (Z_{-n})_{n\in \Z}\stackrel{d}{=} (Z_n)_{n\in \Z}$.
	\end{itemize} 
\end{assump} 

\begin{rem} \label{rem:indexM}
    The shift in the index is motivated by the application of renewal theory.
    The index choice is $(Z_{n-k+2},\ldots, Z_{n+1})$ is such that at regeneration times $\tau_j$, to be defined in Section \ref{sec:splitting}, the two components of $(M_{\tau_j},X_{\tau_j})$ are independent to each other and thus $(X_{\tau_j} - X_{\tau_{j-1}})_{j\in\N}$ forms an i.i.d.\ sequence.
\end{rem}

Our main result establishes exponential decay of correlations at the level of second-order moment measures. 
\begin{thm}[Exponential decay of correlations]
    \label{thm:NewMain}
    Let $(Z_n)_{n\in\mathbb{Z}}$ be a stationary process sequence that satisfies Assumptions \ref{ass:markov}-\ref{MAssu:Inversion} and let  $(X_n)_{n\in \Z}$ be defined as in~\eqref{eq:defX}.  Then the point process $\Phi$ with Palm version $\Phi^0=\sum_{n\in\mathbb{Z}} \delta_{X_n}$ satisfies the following:  there exists $\epsilon >0$ such that, for all bounded intervals $A,B \subset \R$,
    \begin{equation*}
		\E\bigl[ \Phi(A) \Phi(B+t)\bigr] - \E\bigl[ \Phi(A) \bigr]\E\bigl[ \Phi(B) \bigr] = \Or(\e^{-\epsilon t})  \quad \textrm{as } t\to\infty.
    \end{equation*}
\end{thm}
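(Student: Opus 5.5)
The plan is to reduce the covariance to a statement about the Palm version and then apply a Markov renewal theorem with exponential rate (Theorem~\ref{thm:renewal}, to be established via the splitting technique of Section~\ref{sec:splitting}).

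\textbf{Step 1: Palm inversion.} Write $\lambda = 1/\E[Z_1]$. By the refined Campbell theorem for the stationary process $\Phi$,
\[
\E\bigl[\Phi(A)\Phi(B+t)\bigr] = \lambda \int_A \E\bigl[\Phi^0(B+t-x)\bigr]\,\dd x,
\]
so the second factorial moment measure is expressed through $\Phi^0$. This is standard when $Z_n>0$. Under the reversibility alternative of Assumption~\ref{MAssu:Inversion}, I would justify it via Berbee's correspondence, using reversibility to handle the points with negative index.

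Since $\E[\Phi(A)]\E[\Phi(B)] = \lambda^2|A||B|$, it suffices to prove that
\[
U(s) := \E\bigl[\Phi^0(B+s)\bigr] = \sum_{n\in\Z}\P(X_n\in B+s)
\]
satisfies $U(s) = \lambda|B| + O(\e^{-\eps s})$ as $s\to\infty$. Integrating this over $x\in A$, with $A$ bounded, then gives the claim.

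\textbf{Step 2: negative indices.} Split $U(s)$ into the sum over $n\ge 0$ and the sum over $n<0$. The terms with $n<0$ (and, in the signed case, the contributions of $X_n$ for $n\ge0$ that are far from their typical range) are controlled by Chernoff bounds. Assumption~\ref{MAssu:ExpMoment}, applied with negative $t$, together with the positive drift, gives
\[
\P(X_{-n}\ge s) \le \e^{-\eta s}\e^{-\eta' n} \quad\text{for small } t.
\]
Indeed, $\E[\e^{t(Z_1+\dots+Z_n)}] \le \e^{n(t\E Z_1 + O(t^2))}$ for $t$ small, once convexity of the limsup is combined with differentiability at $0$. I expect this differentiability to need care: I would obtain it from geometric ergodicity via a Nagaev-type perturbation, or more simply via the regeneration cycles from Lemma~\ref{lem:tau_geom_tail}. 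Summing over $n$ yields $O(\e^{-\eta s})$.

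\textbf{Step 3: the renewal sum over $n\ge0$.} This is the Markov renewal measure of $(M_n,X_n)$ started from $M_0\sim\pi$, and it is where the main work lies.

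Using Assumptions~\ref{ass:markov}--\ref{Massu:spread_out}, I would construct a Nummelin splitting with regeneration times $\tau_j$. The index convention of Remark~\ref{rem:indexM} makes the cycle increments $X_{\tau_j}-X_{\tau_{j-1}}$ i.i.d. The spread-out assumption makes them spread out, so Stone's theorem with exponential rate applies to the embedded renewal process, provided the cycle lengths $X_{\tau_j}-X_{\tau_{j-1}}$ have exponential moments.

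The main obstacle is verifying these exponential moments. By Lemma~\ref{lem:tau_geom_tail}, $\tau$ has a geometric tail, and Hölder's inequality gives
\[
\E\bigl[\e^{t X_\tau}\bigr] \le \sum_n \E\bigl[\e^{2tX_n}\bigr]^{1/2}\,\P(\tau\ge n)^{1/2},
\]
which is finite once $|t|$ is so small that the growth rate in \eqref{eq:kMGF} at $2t$ is smaller than half the geometric decay rate of $\tau$. This works because the growth rate tends to $0$ as $t\to0$, again by convexity.

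\textbf{Step 4: cyclic decomposition.} Write the renewal sum as the delay contribution (times before $\tau_0$) plus the convolution of the embedded renewal measure with the within-cycle occupation measure $\E\sum_{n<\tau_1-\tau_0}\1\{X_n\in\cdot\}$. Both pieces have exponential tails by the same Hölder argument. Exponential convergence of the renewal measure convolved with a directly Riemann integrable function with exponential tail then gives the limit. Finally, I would identify the constant as $|B|/\E[Z_1]$ via the cycle formula $\E[X_{\tau_1}-X_{\tau_0}] = \E[Z_1]\,\E[\tau_1-\tau_0]$.
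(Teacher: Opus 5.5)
Your overall plan is the paper's. Steps~1, 3 and~4 match it: Theorem~\ref{thm:BBK}, Nummelin splitting, the Cauchy--Schwarz bound of Lemma~\ref{lem:EeXtau}, a cyclic decomposition with an exponentially decaying occupation function (Lemma~\ref{lem:hdecay}), Stone's decomposition, and the cycle formula for the constant. Two steps are not justified as written.

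\textbf{Step~2.} Your Chernoff bound needs $\alpha(-\eta)<0$ for some $\eta>0$, where $\alpha(t)=\limsup_n \frac1n\log\E[\e^{tX_n}]$. Assumption~\ref{MAssu:ExpMoment} and convexity give only finiteness and continuity of $\alpha$ near $0$. Jensen gives $\alpha(t)\ge t\,\E[Z_1]$, which is the wrong direction. A kink at $0$ with $\alpha\ge 0$ on $(-\eps,0)$ is not ruled out by these facts. So the differentiability you flag is a genuine extra lemma, and you have not proved it.

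The paper needs no such lemma. $X_{-n}$ has the law of $-X_n$; the paper cites reversibility, but for these one-dimensional marginals stationarity suffices. Hence the negative-index sum is at most $U_\pi\bigl(\R^k\times(-\infty,-t+c]\bigr)$ for a constant $c$ depending on $A,B$. This is $\Or(\e^{-\eps t})$ by the $t\to-\infty$ half of Theorem~\ref{thm:renewal}. That half comes for free from Stone's decomposition: $u_1(-t)$ and $U_2((-\infty,-t])$ decay exponentially, and $h$ decays on both sides. In other words, the machinery of your Steps~3--4 already closes Step~2.

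\textbf{Step~3.} The claim that the index convention of Remark~\ref{rem:indexM} makes the cycle increments i.i.d.\ holds only when the minorisation~\eqref{eq:minor} holds with $r=1$. For $k\ge2$ this essentially never happens. $M_{n+1}$ shares $k-1$ coordinates with $M_n$, so $P(m,\cdot)$ lives on a line through $m$. A uniform lower bound $\lambda\nu$ on a set of positive $\pi$-measure would then force $(Z_1,\dots,Z_{k-1})$ to have an atom.

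With $r\ge2$, splitting only the skeleton $(M_{rn})$ does not decouple the cycles. The increment $X_{rn}-X_{r(n-1)}$ depends on the intermediate states, and at a regeneration these are correlated with the freshly drawn $M_{rn}$. So the embedded walk does not have independent increments, and Stone's theorem does not apply.

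The paper repairs this in Sections~\ref{sec:splitMX} and~\ref{sec:genproof}. It splits the bivariate chain $(M_{rn},X_{rn}-X_{r(n-1)})$ under Niemi's product minorisation $K^{\ast r}\ge\lambda\,\nu\otimes\eta$. This is where Assumption~\ref{Massu:spread_out} is really used, not just for spreadness of the embedded walk. It then reruns the cyclic decomposition with the block function $g^A$ (Lemmas~\ref{lem:H_cyclic}--\ref{lem:H_integral}). Your plan needs this ingredient to cover general $k$.
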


The study of decay of correlation using discrete renewal theory was done by Giacomin \cite{Giacomin} for homogeneous pinning models, whereas we apply the continuous renewal theory to study systems in continuum.
Mixing properties of point processes have been studied under various notions of mixing; we refer to \cite{Bradley} for an introduction.
Although decay of the second-order moment measure is in general weaker than the widely studied $\alpha$-mixing, it can be useful in certain cases.
For example, Poinas, Delyon and Lavancier \cite{PoinasDelyonLavacier} gave a bound on the $\alpha$-mixing coefficient for associated processes using the truncated correlation function.
For Gibbs point processes in the Dobrushin uniqueness region, $\alpha$-mixing can be deduced from a bound on the covariance, see Jensen \cite{Jensen}.

\section{Application to one-dimensional Gibbs measures}\label{sec:StatMech}
We discuss two examples of point processes to which we apply Theorem \ref{thm:NewMain}: Gibbs point processes on the line for finite-range pair potentials with a hard core and one-dimensional harmonic chains of atoms with next-nearest neighbour interaction.

\subsection{Gibbs point processes on the line} 
Let $\mathbf{N}$ be the space of locally finite counting measures on $\R$,
equipped with the $\sigma$-algebra generated by the sets $\{\eta \in\mathbf N\colon \eta(B) = k\}$, $B\in \B_\R$, $k\in \N_0$, see \cite[Chapter 9]{DaleyJones032}.
A point process is an $\mathbf{N}$-valued random variable.
Let $v\colon\R_+\to \R\cup \{\infty\}$ be a measurable function that is bounded from below. 
A point process $\Phi$ on the line is a Gibbs point process at activity $\zeta>0$ and inverse temperature $\beta >0$ for the pair potential $v$ if, for every measurable test function $F\colon\R\times \mathbf N \to \R_+$, 
\begin{equation}
	\label{eq:GNZ}
	\E\Bigl[ \int_\R F(x,\Phi) \Phi(\dd x) \Bigr] = \int_\R \E\Bigl[ \zeta\, \e^{ - \beta W(x,\Phi)} F(x,\Phi)\Bigr] \dd x,
\end{equation}
where 
\[
	W(x,\Phi) = \begin{cases}
		\int_\R v(|x-y|) \Phi(\dd y) &\quad\text{if } \int_\R |v(|x-y|)|\Phi(\dd y) <\infty,\\
		\infty&\quad \text{else}. 
	\end{cases} 
\] 
Equation~\eqref{eq:GNZ} is called the GNZ equation, and it is equivalent to the DLR equation, see \cite{NZ_1979}.

A probability measure $\P$ on $\mathbf N$ is a Gibbs measure if it is the law of a Gibbs point process. The set of Gibbs measures at $\beta,\zeta$ is denoted $\mathscr G(\beta,\zeta)$. It is well known that if $v$ is superstable \cite{Ruelle_Superstable} and $\int_R^\infty r |v(r)| \dd r <\infty$ for some $R>0$, then for every $\beta,\zeta>0$, the set $\mathscr G(\beta,\zeta)$ consists of a single measure $\mathsf P = \mathsf P_{\beta,\zeta}$ that is translationally invariant and ergodic \cite{Papangelou_1987}. This is the case, for example, when $v$ has a finite range $R<\infty$ (it vanishes on $[R,\infty)$) and a hard core $r_{\mathrm{hc}}>0$ ($v\equiv \infty$ on $[0,r_\mathrm{hc}]$). 
 
Let $\Phi$ be a Gibbs point process i.e.\ $\Phi\sim \mathsf P_{\beta,\zeta}$ and the spacings $(Z_n)_{n\in \Z}$ form a stationary sequence so that $\sum_{n\in \Z} \delta_{X_n}$ with $X_0=0,\,X_n - X_{n-1} = Z_n$  is a Palm version of $\Phi$. 

\begin{thm} \label{thm:gibbs-markov}
	Assume that the pair potential $v$ has a hard core $r_\mathrm{hc} >0$, finite range $R<\infty$, and is finite and bounded from below on $[r_\mathrm{hc}, R]$.  Let $k\in \N$ with $(k+2)r_\mathrm{hc} \geq R$. Let $(Z_n)_{n\in \Z}$ be a stationary sequence of spacings associated with a Gibbs point process $\Phi \sim\mathsf P_{\beta,\zeta}$. Then: 
    \begin{enumerate}
    \item [(a)]
	$\bigl((Z_{kn-k+1},\ldots, Z_{kn})\bigr)_{n\in\Z}$, is a geometrically ergodic stationary Markov chain with state space $(r_\mathrm{hc},\infty)^{k}$. 
    \item [(b)] For all bounded intervals $A,B\subset \R$ and some $\eps>0$
	\begin{equation*}
		\E\bigl[ \Phi(A) \Phi(B+t)\bigr] - \E\bigl[ \Phi(A) \bigr]\E\bigl[ \Phi(B) \bigr] = \Or(\e^{-\epsilon t}) \quad \textrm{as } t\to\infty.
    \end{equation*}
    \end{enumerate}
\end{thm}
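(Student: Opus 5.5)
We first establish the Markov structure of the spacings via the Palm version of the Gibbs measure, then verify Assumptions \ref{ass:markov}--\ref{MAssu:Inversion}, and finally invoke Theorem \ref{thm:NewMain} for part (b).

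\emph{Step 1 (Palm law as a transfer-operator chain).} By the GNZ equation~\eqref{eq:GNZ} combined with the Mecke/Palm correspondence, the reduced Palm measure of $\mathsf P_{\beta,\zeta}$ is again Gibbsian, now with an additional fixed point at $0$. Because of the hard core, any interval of length $R$ contains at most about $R/r_\mathrm{hc}$ points. The condition $(k+2)r_\mathrm{hc}\ge R$ therefore ensures that a point interacts only with points lying within $k$ spacings on either side. Consequently the energy of a configuration decomposes as a sum of terms $\Psi(Z_{n-k+1},\dots,Z_n;Z_{n+1},\dots,Z_{n+k})$ coupling consecutive $k$-blocks only.

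Write $Y_n=(Z_{kn-k+1},\dots,Z_{kn})$. Finite-volume Gibbs measures for the spacings then take the form of products $\prod_n K(Y_n,Y_{n+1})$ with kernel
\[
K(\vect y,\vect y')=\zeta^k \exp\bigl(-\beta H(\vect y,\vect y')\bigr)
\]
on $(r_\mathrm{hc},\infty)^k$. We introduce the transfer operator $T f(\vect y)=\int K(\vect y,\vect y')f(\vect y')\,\dd\vect y'$ on a weighted $L^2$ space. Since $v$ is bounded on $[r_\mathrm{hc},R]$ and vanishes beyond $R$, $K$ is bounded above and below by constant multiples of $\zeta^k\prod_i \1_{\{y'_i>r_\mathrm{hc}\}}$, up to a factor depending only on the first coordinates. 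Working with the kernel $K$ symmetrised by $\e^{-\beta\cdot(\text{pressure})}$ (i.e.\ inserting $\e^{-p\sum y_i'}$, where $p$ is the pressure), $T$ becomes a positive integral operator with Hilbert--Schmidt kernel. Jentzsch/Krein--Rutman then supplies a simple top eigenvalue $\lambda$ with strictly positive eigenfunctions $h$ (right) and $\tilde h$ (left), together with a spectral gap. The stationary spacing law is identified as the Doob transform
\[
P(\vect y,\dd\vect y')=\lambda^{-1}K(\vect y,\vect y')\,\frac{h(\vect y')}{h(\vect y)}\,\dd\vect y',
\qquad \pi\propto \tilde h h,
\]
which gives the Markov property. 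The identification is done by taking the thermodynamic limit of finite-volume Palm measures (for instance in a box with free boundary conditions) and using uniqueness of $\mathsf P_{\beta,\zeta}$. I expect this identification to be the main technical obstacle, because it requires matching the infinite-volume Palm measure with the two-sided stationary Markov chain built from $T$. To handle it, I plan to compute cylinder probabilities of finitely many spacings in finite volume as ratios of iterated $T$-kernels and pass to the limit with the spectral gap.

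\emph{Step 2 (geometric ergodicity).} The lower bound $K(\vect y,\vect y')\ge c\,\e^{-p'\sum y_i'}$, uniform in $\vect y$, makes $P$ satisfy a Doeblin condition on the whole state space, provided $h$ is bounded above and below. Boundedness of $h$ follows from $h=\lambda^{-1}Th$ together with the uniform two-sided kernel bounds. The Doeblin condition yields uniform ergodicity, hence in particular geometric ergodicity, which proves (a).

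\emph{Step 3 (part (b)).} The chain of $k$-blocks $Y_n$ is Markov. The shifted chain $M_n$ of~\eqref{eq:defM} is also Markov: since $(Z_n)$ is $2k$-step Markov, we may apply Theorem \ref{thm:NewMain} with $2k$ in place of $k$, and geometric ergodicity transfers along this block reindexing. The remaining assumptions are checked as follows.
\begin{itemize}
\item Assumption \ref{Massu:spread_out} holds because $P$ has a density with respect to Lebesgue measure.
\item For Assumption \ref{MAssu:ExpMoment}, the Doeblin/kernel bounds give a uniform conditional tail $\P(Z_{n+1}>s\mid\text{past})\le C\e^{-p' s}$. Hence $\E[\e^{t(Z_1+\dots+Z_n)}]\le C_t^n$ for $|t|<p'$, and negative $t$ is trivial because $Z_i>r_\mathrm{hc}>0$.
\item Assumption \ref{MAssu:Inversion} holds since $Z_n>0$.
\end{itemize}
Theorem \ref{thm:NewMain} then gives (b).
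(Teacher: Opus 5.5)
Step~1 has a genuine gap, and it sits exactly where the content of (a) lies: identifying the Palm spacing law of $\mathsf P_{\beta,\zeta}$ with one specific Doob-transformed chain.

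\emph{The choice of $p$ is missing.} Because $v$ vanishes beyond $R$, your grand-canonical kernel $\zeta^k\e^{-\beta H(\vect y,\vect y')}$ does not decay as $y'_i\to\infty$. Inserting $\e^{-p\sum_i y'_i}$ is therefore not a symmetrisation; it produces a different operator for each $p$. After the Doob transform the factor $\zeta^k$ cancels, and each $p>0$ gives a different stationary chain, whose mean spacing $\ell(p)$ runs over $(r_\mathrm{hc},\infty)$. So $\zeta$ can only enter through the choice of $p$, and you never say which $p$. It must be the one with $\zeta^k\lambda_0(\beta,p)=1$ for the kernel $K_{\beta,p}$ of Section~\ref{sec:transfer}, equivalently $\ell(p)=1/\rho(\zeta)$.

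\emph{The limit argument does not work as stated.} In a box of fixed length at activity $\zeta$, the number of spacings is random and only their sum is constrained. Cylinder probabilities are then ratios of partition functions $\Xi(L-x\mid\vect y)/\Xi(L\mid\cdot)$, not ratios of iterated $T$-kernels with a fixed number of factors. The spectral gap controls $\langle f,T^n g\rangle$ as $n\to\infty$. What you actually need is that $\e^{-\beta pL}\,\Xi(L\mid\vect y)$ converges to a multiple of $h(\vect y)$ as $L\to\infty$. A Laplace transform in $L$ only locates the pole at $\zeta^k\lambda_0(\beta,p)=1$. Turning that into asymptotics is a Markov renewal statement in the continuous length variable, and it is absent from your plan.

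\emph{How the paper avoids this.} It never takes a thermodynamic limit. It starts from $P_{\beta,p}$ for arbitrary $p$ and proves the DLR equations for the spacings (Lemma~\ref{lem:spacinggibbs}). Conditioning on $Z_1+\cdots+Z_N$ then shows that $\Phi$ is a canonical Gibbs measure (Lemmas~\ref{lem:restricted}--\ref{lem:canonical2}). Georgii's equivalence of ensembles writes its law as a mixture of the $\mathsf P_\zeta$, and ergodicity collapses the mixture. Finally, the prescribed $\zeta$ is matched using the monotonicity of $\ell(p)$ and the injectivity of $\zeta\mapsto\rho(\zeta)$. To keep your route instead, you would need to fix $p$ by the relation above. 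You would also need to replace the spectral-gap step by a renewal argument for the walk driven by $P_{\beta,p}$ (for instance Theorem~\ref{thm:renewal}), with a two-sided version for the Palm limit.

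\emph{Smaller points.}
\begin{itemize}
\item The interaction range is $k+1$ spacings, not $k$. Your block structure still survives, because the configuration beyond a point interacts only with the last $k$ spacings.
\item Your uniform Doeblin bound needs $v$ bounded \emph{above}, at least on $[2r_\mathrm{hc},R]$. The hypothesis (finite and bounded from below) does not give this. The paper's argument in Lemma~\ref{lem:LJgeoErg} ($L^2$ spectral gap plus Cauchy--Schwarz to pass to sup-norms) uses only the lower bound. It yields geometric, not uniform, ergodicity, with constant proportional to $1/\varphi_0(\vect z)$.
\item A $k$-block chain alone is only periodically Markov. Your claim that $(Z_n)$ is a homogeneous $2k$-step chain needs the unit-shift stationarity of the Palm sequence, which in fact gives the $k$-step property.
\end{itemize}
With these repairs, reducing (b) to Theorem~\ref{thm:NewMain} via overlapping windows is a legitimate alternative to the paper's adaptation of Section~\ref{sec:genproof} to non-overlapping blocks. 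Note, though, that spread-outness for overlapping windows needs $n_0$ at least the window length; a one-step density is not enough.
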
 

Exponential decay of correlations for Gibbs point processes with finite-range potentials that have a hard core is known. Although no explicitly stated so, it should readily follow, for example, from the classical work of Gallavotti and Miracle-Sole \cite{GallavottiMiracleSole} using transfer operators. 
There are various other methods to show exponential decay of correlations for both lattice and continuum systems, for example, the Witten Laplacian method by Helffer, Sj\"ostrand and Bach, M{\o}ller \cite{BachMoller,HelfferSjostrand}, cluster expansion by Procacci, Scoppola and Ueltschi \cite{procacci2001decay,Ueltschi} and the Dobrushin uniqueness technique by Gross, Heinrich and Klein \cite{gross1979decay,Heinrich,klein1982dobrushin}.

Our contribution is a novel purely probabilistic proof that clarifies the relation with Palm theory and renewal theory and is of relevance in the theory of point processes. In addition, we show the Markovian nature of the spacings explicitly; this is of no surprise in mathematical physics, but it is, to the best of our knowledge, not proven explicitly in the literature. The proof is of interest in its own right as it exploits the notion of canonical Gibbs measure and the equivalence of ensembles proven by Georgii \cite{georgii1976canonical,georgii-canonical-book}. 

\medskip
Let us briefly sketch how to obtain the transition kernel of the Markov chain in Theorem \ref{thm:gibbs-markov} for the simplest case $3 r_\mathrm{hc} \geq R$, i.e.\ $m=2$. Given $\beta,p>0$, define
$K_{\beta,p}\colon \R_+\times \R_+ \to \R_+$ by 
\[
	K_{\beta,p}(z_1,z_2) = \exp\Bigl( - \beta \Bigl[ \frac 1 2 (pz_1+ v(z_1)) + v(z_1+z_2) + \frac 12 (p z_2 + v(z_2))\Bigr]\Bigr). 
\]  
The integral operator with kernel $K_{\beta,p}$ is the \emph{transfer operator}, a useful tool for studying statistical mechanics models, whose eigenvalues can be used to show correlation decay and absence of phase transitions, see Chapter 5.6 in Ruelle \cite{Ruelle-book}. 
More importantly for us, it provides a way to connect one-dimensional point processes with Markov chains such that the law of the sequence of spacing and the law of the Markov chain coincide. 

In Section~\ref{sec:transfer} we shall see that, by  the Krein-Rutman theorem \cite[Theorem 19.3]{Deimling}, there exist $\lambda(\beta,p)>0$ and a non-negative function $\varphi_{\beta,p}\colon\R_+\to \R_+$ that vanishes on $[0, r_\mathrm{hc}]$ and is strictly positive on $[r_\mathrm{hc},\infty)$ so that 
\[
	\int_{\R_+} K_{\beta,p}(z_1,z_2) \varphi_{\beta,p}(z_2) \dd z_2 = \lambda(\beta,p) \varphi_{\beta,p}(z_1)
\] 
for all $z_1\geq 0$. Define 
\begin{equation}\label{eq:Transfer_to_transition}
	P_{\beta,p}(z,A):= \frac 1 {\lambda(\beta,p) \varphi_{\beta,p}(z)} \int_A K_{\beta,p}(z,z') \varphi_{\beta,p}(z') \dd z'.    
\end{equation}
Then given $\beta,\zeta\geq 0$ there exists $p=p(\beta,\zeta)$ so that $P_{\beta,p}(x,\dd y)$ 
is a transition kernel for the chain $(Z_n)_{n\in \Z}$ in Theorem \ref{thm:gibbs-markov} (for $m=2$). 

\subsection{Harmonic chain of atoms} \label{sec:harmonic-chain}
Our next example is a harmonic chain of atoms with next-nearest neighbour interaction in equilibrium, also called a harmonic crystal and closely related to the discrete Gaussian free field, see Chapter 8 in Friedli and Velenik \cite{FV}.
These are systems of particles on the real line coupled via a quadratic potential, as if they are attached to their neighbours (and next-nearest neighbours) with a spring, with one end fixed. These models, with possibly more general interaction, are invariant measures for dynamics driven by noise, are used to model rupture of materials \cite{AllmanBetzHairer,AurzadaBetzLifshits}.
Let $k_1,k_2>0$ and $a>0$. 
A configuration with $n+1$ atoms with locations $x_1,\ldots, x_n$ and one particle pinned at a deterministic position $x_0$ (e.g.\ $x_0=0$) has energy 
\begin{equation}\label{eq:eharmonic}
	U_{n}^{x_0}(x_1,\ldots, x_n ) = \sum_{i=1}^n \frac 12 k_1 (x_{i} - x_{i-1} - a)^2 + \sum_{i=2}^n \frac 12 k_2 (x_{i} - x_{i-2} - 2 a)^2.
\end{equation}
The associated Gibbs measure at inverse temperature $\beta >0$ is the  probability measure $\mu_{n,\beta}^{x_0}$ on $\R^n$ with probability density function proportional to $\exp( - \beta U_n)$. The measure favours ordered $x_i$'s with $z_i = x_{i} - x_{i-1}\simeq a >0$ but it allows for negative increments $z_i  <0$. 
Let 
\begin{equation}\label{eq:cgammadef}
	c:= \sqrt{\Bigl( \frac {k_1}{2}+k_2 \Bigr)^2 - k_2^2},\quad \gamma:= \frac{k_1}{2}+k_2 + c
\end{equation}
and 
\begin{equation}\label{eq:pharmonic}
	P_\beta (z,B):= \sqrt{\frac{\beta \gamma}{2\pi}}\, \int_B \exp\Bigl( - \frac 12 \beta \gamma \Bigl(z'-a + \frac{k_2}{\gamma}(z-a)\Bigr)^2  \Bigr)  \dd z'.
\end{equation} 
The kernel $P_\beta$ is the transition kernel for a Markov chain, and the normal law $\mathcal N(a, 1/(2 \beta c))$ is reversible. Let $(Z_n)_{n\in \Z}$ be the associated stationary Markov chain and $\Phi$ the translationally invariant point process with Palm version $\Phi^0 = \sum_{n\in \Z} \delta_{X_n}$ where $X_n- X_{n-1} =Z_n$ and $X_0=0$. 

The next theorem identifies the law of $\Phi$ as the limit of the Gibbs measures $\mu_{n,\beta}^{x_0}$ when we let $n\to \infty$ first and then $x_0 \to -\infty$. Thus we may think of $\Phi$ as the locations of the atoms in an infinite harmonic chain.
A function $f$ on $\mathbf{N}$ is called local if there exists a bounded measurable region $\Lambda \subset \R$ such that for all $\eta,\gamma \in \mathbf{N}$ with $\eta(\cdot \cap \Lambda) = \gamma(\cdot \cap \Lambda)$ we have $f(\eta)=f(\gamma)$.

\begin{thm} \label{thm:harmonic}
    Let $\mu_{n,\beta}^x, \Phi$ be defined as above.
    \begin{enumerate}
        \item [(a)] For every local bounded measurable function $F\colon\mathbf N\to \R$, 
            	\[
            	   \lim_{x_0\to -\infty} \lim_{n\to \infty} \int_{\R^n} F(\delta_{x_1}+\cdots +\delta_{x_n}\bigr) \dd \mu_{n,\beta}^{x_0}(x_1,\ldots,x_n) = \E\bigl[F(\Phi)]. 
                \] 
        \item [(b)] For all bounded intervals $A,B\subset \R$ and some $\eps>0$,
        	\begin{equation*}
        		\E\bigl[ \Phi(A) \Phi(B+t)\bigr] - \E\bigl[ \Phi(A) \bigr]\E\bigl[ \Phi(B) \bigr] = \Or(\e^{-\epsilon t})  \quad \textrm{as } t\to\infty.
            \end{equation*}
    \end{enumerate}
\end{thm}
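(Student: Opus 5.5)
We prove (a) and (b) separately: (a) by an explicit identification of finite-volume marginals, and (b) by verifying Assumptions \ref{ass:markov}--\ref{MAssu:Inversion} for $k=1$ and then invoking Theorem \ref{thm:NewMain}.

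\emph{Part (b).} By \eqref{eq:pharmonic}, $Z_{n+1}-a = -\frac{k_2}{\gamma}(Z_n-a)+\xi_{n+1}$ with i.i.d.\ $\xi_n\sim\mathcal N(0,1/(\beta\gamma))$. This is a Gaussian AR(1) process with coefficient $\rho=-k_2/\gamma$. Since $\gamma>k_2$, we have $|\rho|<1$.

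The assumptions are then checked one by one.
\begin{itemize}
\item Assumption \ref{ass:markov} holds with $k=1$.
\item Assumption \ref{MAssu:RegularityMC} holds because $P^n(z,\cdot)=\mathcal N(a+\rho^n(z-a),\sigma^2(1-\rho^{2n}))$ with $\sigma^2=1/(2\beta c)$. A direct bound on the total variation distance between Gaussians gives $\|P^n(z,\cdot)-\pi\|_{\mathsf{TV}}\le C(1+|z-a|)|\rho|^n$. Alternatively, the drift function $V(z)=1+(z-a)^2$ together with the fact that compact sets are small gives the same conclusion.
\item Assumption \ref{Massu:spread_out} holds because $P(z,\cdot)$ has a Lebesgue density.
\item Assumption \ref{MAssu:Inversion} holds because the stationary Gaussian AR(1) process is reversible: its law is determined by the symmetric covariance $\rho^{|m-n|}\sigma^2$.
\end{itemize}

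For Assumption \ref{MAssu:ExpMoment}, note that $S_n=Z_1+\dots+Z_n$ is Gaussian with mean $na$. Its variance is
\[
\mathrm{Var}(S_n)=\sum_{i,j}\sigma^2\rho^{|i-j|}\le n\sigma^2\frac{1+|\rho|}{1-|\rho|}.
\]
Hence $\frac1n\log\E e^{tS_n}=ta+\frac{t^2}{2n}\mathrm{Var}(S_n)$ stays bounded for every $t$. Theorem \ref{thm:NewMain} now yields (b). None of this is hard.

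\emph{Part (a).} The main work is identifying the limit. I would change variables to spacings $z_i=x_i-x_{i-1}$ and write $x_{-1}:=x_0-a$ as a fictitious convention. The energy \eqref{eq:eharmonic} is then a quadratic form in $(z_1,\dots,z_n)$ with nearest-neighbour coupling: the terms are $\frac{k_1}2(z_i-a)^2$ and $\frac{k_2}2(z_i+z_{i-1}-2a)^2$. So $\mu^{x_0}_{n,\beta}$ is the law of a Gaussian chain, i.e.\ a one-dimensional transfer operator with kernel $\exp(-\beta[\frac{k_1}4 u^2+\frac{k_1}4 w^2+\frac{k_2}2(u+w)^2 + \text{boundary}])$ in centred variables. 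Its top eigenfunction is a Gaussian $e^{-\beta\alpha u^2/2}$. Solving the resulting quadratic equation for $\alpha$ should produce exactly $\gamma$ and $c$ from \eqref{eq:cgammadef}, and the Doob transform as in \eqref{eq:Transfer_to_transition} should give $P_\beta$.

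The hardest step is controlling the boundary terms uniformly as $n\to\infty$. For this I would use the explicit Gaussian structure. The law of $(z_1,\dots,z_m)$ under $\mu_n$ is Gaussian, with mean and covariance obtained by inverting a tridiagonal (Jacobi) matrix. As $n\to\infty$ these converge exponentially (rate $\rho^2$) to those of the chain $P_\beta$ started from some explicit initial Gaussian law of $z_1$ that depends on the left boundary. Thus $\lim_n\mu^{x_0}_{n,\beta}$ is the law of $x_0+X'_j$, where $X'$ is a Markov random walk with kernel $P_\beta$ and a non-stationary start.

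Finally I would let $x_0\to-\infty$ and apply a renewal-type argument. A local $F$ only sees the points in a bounded window $\Lambda$, which lies at distance about $|x_0|$ from the pinned atom. The process seen from a far point converges to the stationary version $\Phi$. This follows from Theorem \ref{thm:renewal} (the Markov renewal theorem), applied to the geometrically ergodic chain started from a non-stationary law, or equivalently from the Palm inversion together with the coupling of $P_\beta$ to stationarity at rate $|\rho|^n$. Some care is needed because spacings can be negative, so points may enter $\Lambda$ out of order. This is handled by the exponential moment bound, which shows that only $O(|x_0|)$ indices can contribute, up to exponentially small error.
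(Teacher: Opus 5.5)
\textbf{Part (b).} This is correct and follows the paper. You verify Assumptions \ref{ass:markov}--\ref{MAssu:Inversion} for the Gaussian AR(1) chain with $k=1$ and then apply Theorem \ref{thm:NewMain}. Your direct variance computation for Assumption \ref{MAssu:ExpMoment} is cleaner than the paper's.

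\textbf{Main gap: the outer limit in part (a).} The step $x_0\to-\infty$ is not established.
\begin{enumerate}
\item Theorem \ref{thm:renewal} is a first-moment statement. It describes $U_\mu(A\times[t-q,t])$, the expected number of atoms in a window, and only for $\mu\in\{\pi,\nu\}$. Convergence of the intensity of $\Theta_{x_0}\Phi^+$ near the origin does not give convergence of $\E[F(\Theta_{x_0}\Phi^+)]$ for a general bounded local $F$. A void probability $F(\eta)=\1\{\eta([0,1])=0\}$ is a counterexample.
\item Your alternative, coupling the $P_\beta$-chain to stationarity at rate $|\rho|^n$, only matches the spacing sequences after some index $T$. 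The two configurations remain offset by the random shift $X_T-X'_T$.
\item Removing that offset needs a coupling \emph{in space} that uses the spread-out property. With lattice spacings the index-time coupling still works, yet the conclusion fails. Palm inversion does not supply this spatial coupling.
\end{enumerate}
This is exactly what the paper imports from Berbee's Theorem 6.3.1. For this aperiodic, positive Harris, spread-out chain, $\Theta_t\Phi^+|_{\R_+}$ converges in total variation to $\Phi|_{\R_+}$ for $\pi$-a.e.\ starting state. Dominated convergence then handles the initial law $\zeta\ll\pi$. Note that $\dd\zeta/\dd\pi\propto G_\beta$ is unbounded, so $\zeta$ is not covered by the $\pi,\nu$ cases of Theorem \ref{thm:renewal} anyway. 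To close the gap you need either such a total-variation renewal/coupling theorem, or a genuine key renewal theorem for non-additive functionals; with non-monotone positions the latter is substantially more work than your sentence suggests.

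\textbf{Smaller omission: the inner limit.} Your Jacobi-matrix/Doob-transform computation gives convergence of the finite-dimensional spacing marginals under $\mu^{x_0}_{n,\beta}$. That is a legitimate alternative to the paper's spectral-gap Lemma \ref{lem:mixing_ineq_G}, but it only controls atoms of index at most $m$. Because spacings can be negative, you also need
\[
\sup_n \mu^{x_0}_{n,\beta}\bigl(\exists\, i>m\colon x_i\in\Lambda\bigr)\to 0 \quad\text{as } m\to\infty.
\]
The exponential-moment bound you cite concerns the stationary chain, not $\mu^{x_0}_{n,\beta}$, under which atoms with index near $n$ feel the right boundary. The paper gets the needed uniformity from $\Sigma_n\le(\beta k_1)^{-1}I_n$, i.e.\ $\mathrm{Var}(X_i)\le i/(\beta k_1)$ uniformly in $n$ (term (A) of its $\eps/3$ argument).

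\textbf{Minor point.} Drop the fictitious $x_{-1}=x_0-a$. The $k_2$-sum in \eqref{eq:eharmonic} starts at $i=2$, so extending it to $i=1$ would change the measure. The correct left-boundary factor $\e^{-\beta k_1(z_1-a)^2/4}\varphi_\beta(z_1)$ produces the initial law $\zeta$.
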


\begin{rem}
    \label{rem:AR1}
	The chain $(Z_n-a)_{n\in \N}$ is an autoregressive process of order 1 with Gaussian noise: Indeed, let $(\xi_n)_{n\in\Z}$, be i.i.d.\ standard normal random variables. Then, given $Z_0$, the chain $(Z_n)_{n\geq 0}$ defined by 
	\begin{equation}\label{eq:AR1}
		Z_{n+1} -a = - \frac{k_2}{\gamma} (Z_n -a)  + \frac 1{\sqrt{\beta \gamma}}\, \xi_n.  
	\end{equation} 
	is a Markov chain with transition kernel~\eqref{eq:pharmonic}.	Autoregressive processes are an important building block for more complicated models used to study time series, e.g.\ in signal processing and mathematical finance, see \cite{Brockwell_Davis}.
\end{rem} 

\section{The splitting technique and regeneration times} \label{sec:splitting}

For the readers' convenience, we summarise here the splitting technique, developed independently by Nummelin \cite{Nummelin78} and Athreya and Ney \cite{AthreyaNey78}.
This technique also allows us to define the regeneration epoch $\tau$, and show how geometric ergodicity implies a geometric tail of $\tau$.
We follow Meyn and Tweedie  \cite[Chapter 5]{MeynTweedie} closely. Sections~\ref{sec:splitM} and~\ref{sec:tau} work under the additional assumption that the Markov chain is strongly aperiodic. In Section~\ref{sec:splitMX} we drop the assumption and apply the splitting construction to the bivariate chain $(M_{rn}, X_{rn} - X_{rn-r})$ with suitable $r$.

\subsection{The splitting technique} \label{sec:splitM} 

We illustrate the splitting technique by proving Lemma~\ref{lem:splitM} below, a classical result. This lemma, together with Lemma~\ref{lem:tau_geom_tail} on the tails of $\tau$, is sufficient for the proofs of Theorem~\ref{thm:gibbs-markov} in the case $m=2$ and Theorem \ref{thm:harmonic}. 

By Assumption \ref{MAssu:RegularityMC} on geometric ergodicity,  the driving chain $(M_n)_{n\in \N_0}$ is \emph{$\pi$-irreducible} \cite[Chapter 4.2]{MeynTweedie}, i.e., every set of positive $\pi$-measure can be reached from any initial configuration: 
\begin{equation} \label{eq:irreducibility}
	\pi(A) > 0\ \Rightarrow\ \forall x \in \R^k\ \exists \ell \in \N\ P^\ell(x,A) >0. 
\end{equation}
In addition, ergodicity implies that the chain is Harris recurrent, i.e.\ 
\[
	\pi(A) >0\ \Rightarrow\ \forall x \in A\colon\ \P_x\bigl( (M_n)\text{ visits }A\text{ infinitely often}\bigr) = 1,
\] 
see e.g.~\cite[Chapter 9]{MeynTweedie} and \cite[Proposition 6.3]{Nummelin_1984_book}. As it has an invariant probability measure, it is a \emph{positive Harris chain} \cite[Proposition 6.3]{Nummelin_1984_book}. Because of $\pi$-irreducibility \eqref{eq:irreducibility}, the minorisation lemma \cite[Theorem 5.2.3]{MeynTweedie} applies: there is a set $R$ with $\pi(R)>0$, an integer $r\in \N$, and a $\lambda \in (0,1)$ such that the \emph{minorisation condition} holds, i.e.\ 
\begin{equation} \label{eq:minor}
	P^r(x, A) \geq \lambda \nu(A),\quad \nu(A):= \frac{\pi(A\cap R)}{\pi (R)}
\end{equation} 
for all $A$, see \cite[Chapter 5.2]{MeynTweedie}. 

Suppose that we may take $r=1$ in~\eqref{eq:minor}, i.e., the chain is \emph{strongly aperiodic} in the sense of Athreya and Ney \cite{AthreyaNey78}.
For $x\in R$ and measurable $A\subset \R^k$, define $Q(x,A)$ by 
\[
	P(x,A) = \lambda \nu(A) + (1-\lambda) Q(x,A),
\] 
where $\lambda,\nu$ are taken from the minorisation condition \eqref{eq:minor}.
The minorisation condition \eqref{eq:minor} for $r=1$ guarantees that $Q(x,\cdot)$ is a probability measure. One may now slightly modify the Markov dynamics and define a new chain. The new chain evolves as $(M_n)$ until it hits the \emph{regeneration set} $R$ in some point $x\in R$. When it does, a coin is flipped; with probability $\lambda$, the next location is chosen according to $\nu$, with probability $1-\lambda$, it is chosen according to $Q(x,\cdot)$. This is essentially the construction in Athreya and Ney \cite{AthreyaNey78}. 

Another, closely related, approach is the split chain, a Markov chain $(M_n^*)$ with state space $\R^k\times \{0,1\}$ and marginal $(M_n)$, see Nummelin \cite{Nummelin78} and Meyn and Tweedie \cite[Chapter 5.1]{MeynTweedie}. Every probability measure $\mu$ on $\R^k$ is lifted to a measure $\mu^\ast$ on $\R^k\times \{0,1\}$ by 
\begin{equation*}
	\mu^\ast(A\times \{0\}) = (1-\lambda) \mu(A\cap R) + \mu(A\cap R^\mathrm c),\quad 
   \mu^\ast(A\times \{1\}) = \lambda \mu (A\cap R).
\end{equation*}
Let us write $\mathcal M_1(\R^k)$ for the set of probability measures on $\R^k$. 

\begin{lem} \label{lem:splitM}
	Let $P$ be the transition kernel of a positive Harris chain with state space $\R^k$ and invariant probability measure $\pi$. Suppose that the minorisation condition~\eqref{eq:minor} holds with $r=1$. Then there exists a tuple 
	\[
		\Bigl( \Omega,\F, (\P_{\mu^\ast})_{\mu \in \mathcal M_1(\R^k)}, (M_n^\ast)_{n\in \N_0}, (\tilde \tau_j)_{j\in \N_0} \Bigr) 
	\] 	
	in which $(\Omega,\F)$ is a measurable space, each $\P_{\mu^\ast}$ is a probability measure on $(\Omega,\F)$, and:
	\begin{enumerate} [(i)]
		\item Under $\P_{\mu^\ast}$, $(M_n^\ast)_{n\in \N_0}$ is a time-homogeneous Markov chain with initial law $\mu^\ast$.
		\item $0< \tilde \tau_1< \cdots $ is a sequence of  $\P_{\mu^\ast}$-almost surely finite stopping times  (with respect to the natural filtration $(\F_n)_{n\in\N_0}$ of $(M_n^\ast)_{n\in \N_0}$).
		\item For all measurable sets $A\subset \R^k$, and every probabilty measure $\mu$ on $\R^k$,
                \begin{equation} \label{eq:regeneration_M}
            		\P_{\mu^\ast}\Bigl( M_{\Tilde{\tau}_j}^\ast  \in A\times \{0,1\} \, \Big|\, \F_{\tilde \tau_j -1} \Bigr) = \nu(A),
            	\end{equation}
            	$\P_{\mu^\ast}$-almost surely.
        \item   The driving chain $(M_n)_{n\in \N_0}$ with initial law $\mu$ is the marginal of $(M^\ast_n)$ with initial law $\mu^\ast$, i.e.\ for every probability measure $\mu$ on $\R^k$ and $n\in\N$ and all measurable $A \in \R^k$,
            	\begin{equation} \label{eq:split_marginal}
 			\P_{\mu^\ast}(M^\ast_n \in A \times \{ 0,1 \}) = P^n(\mu,A).
            	\end{equation}
	\end{enumerate} 
\end{lem}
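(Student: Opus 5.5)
The plan is to construct the split chain explicitly, following Nummelin and Meyn--Tweedie \cite[Chapter 5.1]{MeynTweedie}, on the canonical path space $\Omega = (\R^k\times\{0,1\})^{\N_0}$ with the product $\sigma$-algebra. We first define a transition kernel $P^\ast$ on $\R^k\times\{0,1\}$:
\begin{itemize}
\item for $x\notin R$, $P^\ast((x,0),\cdot) = P(x,\cdot)^\ast$;
\item for $x\in R$, $P^\ast((x,0),\cdot) = Q(x,\cdot)^\ast$;
\item for $x\in R$, $P^\ast((x,1),\cdot) = \nu^\ast$.
\end{itemize}
On the $\pi$-null-irrelevant set $R^{\mathrm c}\times\{1\}$ we set $P^\ast((x,1),\cdot)=\nu^\ast$ for definiteness. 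Since $Q(x,\cdot)$ is a probability measure by the minorisation with $r=1$, $P^\ast$ is a genuine Markov kernel. By Ionescu--Tulcea, for every initial law $\mu^\ast$ there is a unique $\P_{\mu^\ast}$ on $\Omega$ under which the coordinate process $M_n^\ast$ is Markov with kernel $P^\ast$ and initial law $\mu^\ast$. This gives (i).

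For (iv), the key identity is that the $\R^k$-marginal of $\mu^\ast P^\ast$ equals $\mu P$; iterating it and using the Markov property gives \eqref{eq:split_marginal}. To prove the identity, write $\mu^\ast$ as its atoms on $R^{\mathrm c}\times\{0\}$, $R\times\{0\}$ and $R\times\{1\}$. The marginal of $\mu^\ast P^\ast$ is then
\[
\int_{R^{\mathrm c}}P(x,\cdot)\mu(\dd x)+\int_R\bigl[(1-\lambda)Q(x,\cdot)+\lambda\nu\bigr]\mu(\dd x)=\mu P,
\]
by the defining relation of $Q$. Note that lifting commutes with taking marginals, since $\mu^\ast(A\times\{0,1\})=\mu(A)$. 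The induction step therefore needs $\P_{\mu^\ast}(M^\ast_n\in\cdot)=(\mu P^n)^\ast$, not just equality of marginals. This holds because every transition above outputs a lifted measure, and lifting is linear: a mixture of lifted measures is the lift of the mixture.

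For (ii)--(iii), define $\tilde\tau_0$ and $\tilde\tau_j := 1+\inf\{n\ge \tilde\tau_{j-1}: M_n^\ast\in R\times\{1\}\}$, which are hitting times of the atom $R\times\{1\}$ shifted by one; I take $\tilde\tau_0=0$. Each $\{\tilde\tau_j=n\}$ is determined by $M_0^\ast,\dots,M^\ast_{n-1}$, so it lies in $\F_{n-1}\subset\F_n$. Thus the $\tilde\tau_j$ are stopping times (indeed predictable ones), and they are strictly increasing. Property (iii) is the strong Markov property at the predictable time. On $\{\tilde\tau_j=n\}\in\F_{n-1}$, we have $M_{n-1}^\ast\in R\times\{1\}$, so the conditional law of $M^\ast_n$ given $\F_{n-1}$ is $\nu^\ast$. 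Its first marginal is $\nu$. Summing over $n$ yields \eqref{eq:regeneration_M}.

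The main obstacle is almost sure finiteness of the $\tilde\tau_j$, which requires showing that the atom $R\times\{1\}$ is visited infinitely often. By Harris recurrence of $(M_n)$, the chain visits $R$ infinitely often $\P_x$-a.s. for $x\in R$. For general $\mu$, Harris recurrence combined with $\pi$-irreducibility extends this to all starting points; we invoke \cite[Chapter 9]{MeynTweedie}, where for positive Harris chains $\P_x(M_n\in R \text{ i.o.})=1$ for all $x$. Using (iv), in its path-level form (the marginal process of $M^\ast$ is a Markov chain with kernel $P$), the split chain visits $R\times\{0,1\}$ infinitely often a.s. At each such visit, the bit is $1$ with conditional probability $\lambda$ given the past and the current location. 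This follows from the lift construction: each visit to $R$ carries an independent-in-effect $\lambda$-coin, since every kernel outputs a lifted measure. A conditional Borel--Cantelli (Lévy) argument, or directly the bound $(1-\lambda)^m$ for failing in the first $m$ visits, then shows that $R\times\{1\}$ is hit infinitely often. Hence every $\tilde\tau_j<\infty$ almost surely.
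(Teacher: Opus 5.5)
Your proposal is correct and follows the paper's proof: the same split kernel $\hat P$, the same shifted hitting times of the atom, and (iii) from the Markov property at the predictable time $\tilde\tau_j$ together with $\hat P((x,1),\cdot)=\nu^\ast$. The differences are minor: you prove the marginal identity $\mu^\ast\hat P=(\mu P)^\ast$ yourself, and you prove almost sure finiteness of every $\tilde\tau_j$ via infinitely many visits to $R$ and the conditional $\lambda$-coin giving the $(1-\lambda)^m$ bound, where the paper cites Meyn--Tweedie Theorem 5.1.3 and Athreya--Ney instead; and you use hits of $R\times\{1\}$ rather than $\R^k\times\{1\}$, which coincide almost surely because lifted measures put no mass on $R^{\mathrm c}\times\{1\}$.
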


\begin{proof}
	The lemma is well known, we give the proof because we use details of the construction later on. Define a Markov kernel $\hat P$ on $\R^k\times\{0,1\}$ by 
	\begin{align*}
		    \hat{P} \bigl( (x,0), \cdot\bigr) &= P (x, \cdot)^\ast, & x \in R^\mathrm c, \\
		    \hat{P} \bigl( (x,0), \cdot\bigr) &= Q(x,\cdot)^\ast, & x \in R, \\
		    \hat{P} \bigl( (x,1),\cdot \bigr) &= \nu^\ast, &  x \in \R^k. 
	\end{align*}
	Clearly there is a family  $(\Omega,\F,(\P_{\mu^\ast})_{\mu \in \mathcal M_1(\R^k)}, (M_n^\ast)_{n\in \N_0})$ in which each $\P_{\mu^\ast}$ is a probability measure and under $\P_{\mu^\ast}$, is a Markov chain with transition kernel $\hat P$. The kernel satisfies 
	\[
		\int \mu(\dd x) P^\ell(x,A) = \int \mu^\ast (\dd x^\ast) {\hat P}^\ell(x^\ast, A\times\{0,1\})
	\]  
	for all Borel sets $A\subset \R^k$, all $\ell \in \N$ and every probability measure $\mu$ on $\R^k$ \cite[Theorem 5.1.3]{MeynTweedie}.  This proves items (i) and (iv). 
	Harris recurrence implies that the hitting time 
	\begin{equation} \label{eq:def_tau-1}
    		\sigma_1:= \inf \{n\in \N_0\colon\ M_n^\ast \in \R^k\times\{1\}\}
	\end{equation}
	is almost surely finite, see \cite{AthreyaNey78}.
	We define $\Tilde{\tau}_1:= 1+ \sigma_1$ and then inductively a sequence of stopping times $(\Tilde{\tau}_j)$ (with respect to the natural filtration of $(M_n^\ast)$) by 
	\[
		\Tilde{\tau}_{j+1}:= 1+ \inf\bigl\{ n\geq \Tilde{\tau}_j\colon\, M_n^\ast \in \R^k\times\{1\}\bigr\}. 
	\] 
      Item (iv) follows from the strong Markov property and $\hat P( (x,1),A \times \{0,1\}) = \nu(A)$ for all $x\in \R^k$,
\end{proof} 

\subsection{Geometric tail of the regeneration time} \label{sec:tau} 
We now show that geometric ergodicity implies that the regeneration time $\tilde \tau_1$ has finite exponential moments. Remember the regeneration set $R$ in the minorisation condition \eqref{eq:minor}, the invariant measure $\pi$ and the measure $\nu(B) = \pi(B\cap R)/\pi(B)$. 

\begin{lem}\label{lem:tau_geom_tail}
	Let $P$ be the transition kernel of a geometrically ergodic (in the sense of Assumption~\ref{MAssu:RegularityMC}) Markov chain with state space $\R^k$. Suppose that $P$ satisfies the minorisation condition~\eqref{eq:minor} with $r=1$. Then, making the regeneration set $R$  in~\eqref{eq:minor} smaller if needed, we may assume that the stopping time $\tilde \tau_1$ in Lemma~\ref{lem:splitM} satisfies, for $\mu \in \{\pi,\nu\}$ and some $\eps>0$, 
	\begin{equation*}
		\E_{\mu^\ast} \bigl [\e^{\epsilon \tilde{\tau}_1}\bigr] < \infty.
	\end{equation*} 
\end{lem}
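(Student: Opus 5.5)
Geometric ergodicity gives geometric drift, and geometric drift turns each visit to a suitable small set into a fixed chance to regenerate; that is the route I would take. First I invoke the standard equivalence for $\psi$-irreducible aperiodic chains, Meyn--Tweedie Theorem 15.0.1: Assumption~\ref{MAssu:RegularityMC} implies a geometric drift condition. Concretely, there are a function $V\colon\R^k\to[1,\infty)$ with $\pi(V)<\infty$, constants $\rho\in(0,1)$ and $b<\infty$, and a set $C$ with
\[
	PV \leq \rho V + b\1_C .
\]
Here $C$ may be taken to be a sublevel set $\{V\le d\}$ for any sufficiently large $d$, and every such sublevel set is petite. Since the minorisation~\eqref{eq:minor} holds with $r=1$ on $R$, I replace $R$ by $R' = R\cap\{V\le d\}$ with $d$ so large that $\pi(R')>0$; this is the ``making $R$ smaller'' in the statement. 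A minorisation on $R$ restricts to $R'$: the $\lambda$ stays the same and $\nu$ is replaced by $\nu'$ normalised on $R'$, possibly with a smaller constant $\lambda\pi(R')/\pi(R)$ after renormalising. So we may assume $R\subset\{V\le d\}$ and $\nu(V)\le d$.

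Next, the excursion structure. Define the successive hitting times $T_1<T_2<\cdots$ of $R\times\{0,1\}$ by the split chain. By the construction of $\hat P$ in Lemma~\ref{lem:splitM}, at each visit to $R$ the second coordinate equals $1$ with probability $\lambda$, independently of the past given the position. Hence $\sigma_1=T_G$, where $G$ is geometric with parameter $\lambda$.

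The key estimate is a uniform exponential moment for the return time to $R$. From the drift condition, the standard argument (Meyn--Tweedie Theorem 15.2.5) gives some $\kappa>1$ such that
\[
	\E_x\bigl[\kappa^{T_R}\bigr]\le c\,V(x)
\]
for all $x$, where $T_R$ is the first hitting time of $R$. The proof applies optional stopping to the supermartingale $\kappa^n V(M_n)$ with $\kappa=\rho^{-1}$ away from $C$, and handles $C\setminus R$ by petiteness. This needs care because $C\neq R$; I would instead take $C=R'$ from the start by choosing the drift set to be $\{V\le d\}$ and using that it is small.

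After a non-regenerating visit at $x\in R$, the next state has law $Q(x,\cdot)$. Then $Q V\le (PV)/(1-\lambda)\le(\rho d+b)/(1-\lambda)$ on $R$. Thus every post-visit excursion has $\E[\kappa^{\text{length}}]\le K$ for a fixed $K$, uniformly over the past. The same bound holds for the excursion starting from $\nu$, using $\nu(V)\le d$, and from $\pi$, using $\pi(V)<\infty$.

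Finally I combine the two. By the strong Markov property, and Hölder if needed,
\[
	\E\bigl[\kappa_1^{\sigma_1}\bigr]\le \sum_{g\ge1}\lambda(1-\lambda)^{g-1}K_1^{g}.
\]
Choose $\kappa_1=\kappa^{\theta}$ with $\theta$ small. By Jensen, $K_1 = K^\theta$ tends to $1$ as $\theta\to0$, so the series converges once $(1-\lambda)K^\theta<1$. This gives $\E_{\mu^*}[\e^{\eps\tilde\tau_1}]<\infty$ for $\mu\in\{\pi,\nu\}$, since $\tilde\tau_1=\sigma_1+1$.

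The main obstacle is the first step. One must make sure the drift function and the regeneration set are compatible, with $V$ bounded on $R$ and the drift set equal to, or contained in, $R$. This requires the ``shrink $R$'' freedom together with the fact that the sublevel sets of $V$ are small with $r=1$, or at least that returns to them carry a uniform positive chance of entering $R$.
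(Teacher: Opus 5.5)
Your overall strategy is sound and genuinely different from the paper's: a drift function bounded on a shrunken regeneration set, plus a geometric number of visits to that set before the splitting coin succeeds. The estimate you aim for, $\E_x[\kappa^{T_{R'}}]\le c\,V(x)$, is the right one and is true. However, the way you say you would obtain it, by ``taking $C=R'$ from the start'', fails, and as written the proof has a gap at exactly that step.

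\textbf{Why the proposed fix fails.} Shrinking gives $R'=R\cap\{V\le d\}\subset\{V\le d\}$. A drift inequality $PV\le\rho V+b\1_{R'}$ with the same $V$ would need $PV\le\rho V$ on $\{V\le d\}\setminus R'$, and nothing provides that. A Lyapunov function with drift set $R'$, such as $x\mapsto\E_x[\kappa^{T_{R'}}]$, is available only once the bound you want is already known. Sublevel sets of $V$ are also not small with $r=1$ in general. Even if one were, using it as regeneration set is not ``making $R$ smaller'': it changes the minorising measure and hence the split chain of Lemma~\ref{lem:splitM}.

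\textbf{How to close the gap.} Your fallback is the correct route and must actually be carried out.
\begin{itemize}
\item Since $C=\{V\le d\}$ is petite and $\pi(R')>0$, irreducibility gives $m\in\N$ and $\epsilon_0>0$ with $\inf_{x\in C}\P_x(M_n\in R'\text{ for some }n\le m)\ge\epsilon_0$.
\item Optional stopping for $\rho^{-n}V(M_n)$ gives $\E_x[\rho^{-\tau_C}]\le(1+b/\rho)V(x)$, where $\tau_C\ge1$ is the first hitting time of $C$.
\item The bound $P^mV\le\rho^mV+b/(1-\rho)$ controls $V$ after a failed attempt.
\end{itemize}
A geometric-trials argument over these cycles then yields $\E_x[\kappa^{T_{R'}}]\le c\,V(x)$ for some $\kappa>1$. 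Hölder is needed here because success and cycle length are dependent. This is the geometric-regularity result of \cite[Section~15.2]{MeynTweedie}, and it is what you must cite or prove.

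With that in place the rest of your argument is correct. On $R'$ you have $QV\le(\rho d+b)/(1-\lambda')$. The coins at successive visits are independent with success probability $\lambda'=\lambda\pi(R')/\pi(R)$; note the constant does change after shrinking, with $\nu'=\pi(\cdot\cap R')/\pi(R')$. Conditional Jensen then gives $(1-\lambda')K^\theta<1$ for small $\theta$. That $V$ may be infinite off an absorbing $\pi$-full set is harmless for the starts $\pi$ and $\nu$.

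\textbf{Comparison with the paper.} Extracting any drift condition from Assumption~\ref{MAssu:RegularityMC} already requires making $C(x)$ uniform on a positive-measure subset of $R$. That is precisely the paper's first step (following \cite{RobertRosenthal}), and the routes diverge only afterwards. The paper uses no Lyapunov function and no petite sets. From $\|\nu P^n-\pi\|_{\mathsf{TV}}\le C\delta^n$ it applies the Kendall-type theorem of \cite{NummelinTweedie} to the split chain. This shows that the law of $\tilde\tau_1$ under $\P_{\nu^\ast}$, written through the taboo kernel $(P-\lambda\1_R\otimes\nu)^n$, has a geometric tail. The paper then passes to the start $\pi$ via the cycle representation of $\pi$.

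Your route costs the drift characterisation and geometric regularity, but it proves more. It gives $\E_{\delta_x^\ast}[\e^{\eps\tilde\tau_1}]\le c\,V(x)$ for every $x$ with $V(x)<\infty$. It also handles the stationary start directly through $\pi(V)<\infty$, rather than through the regenerative formula for $\pi$.
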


\begin{proof}
	First we check that, upon making the regeneration set smaller if needed, there exist constants $C>0$ and $ \delta \in (0,1)$ such that 
	\begin{equation}\label{eq:rgeom1}
		\sup_{x\in R} || P^n(x,\cdot) - \pi ||_\mathsf{TV}  \leq C \delta^ n
	\end{equation} 
	 for all $n\in \N$. To that aim, we adapt an argument in Roberts and Rosenthal~\cite{RobertRosenthal}. As $\R^k$ is countably generated, the map $x\mapsto ||P^n(x,\cdot) - \pi||_\mathsf{TV}$ is measurable, for every $n$ \cite[Appendix]{RobertRosenthal}.  Therefore, setting $\delta:= q$ with $q$ from Assumption~\ref{MAssu:RegularityMC},  $C(x):=\limsup_{n\to \infty}  \delta^{-n} ||P^n(x,\cdot)- \pi||_\mathsf{TV}$ is a  measurable function of $x$ as well. By the assumption on geometric ergodicity, $C(x)<\infty$ for all $x\in \R^k$. Define
\begin{equation*}
	B_d = \{ x\in R \mid C(x)\leq d \}.
\end{equation*}
	Since $C(x)<\infty$ on $\R^k$, we must have $\cup_{d \in \N} B_d = R$, hence 
 $\lim_{d\rightarrow \infty} \pi(B_d) = \pi(R) >0$ and 
there exists $D>0$ such that $\pi(B_D)>0$.  The set $R':= B_D$ satisfies the minorisation condition with $r=1$ as well:  for all $E\subset R$ and all $x\in R$, 
	\[
		P(x,E) \geq \lambda \nu (E \cap R) \geq \lambda \nu(B_D)\frac{\nu(E\cap B_D)}{\nu(B_D)}
	\]
	with $\nu(B_D) = \pi(B_D)/\pi(R)>0$. On $R'$ condition~\eqref{eq:rgeom1} holds true with $C = D$. Thus, we may assume ~\eqref{eq:rgeom1} (otherwise, replace $R$ with $R'$), which implies 
	 \begin{equation}   \label{eq:pseudo_Geo_Erg}
	    \norm{\nu P^n - \pi}_{\mathsf{TV}} \leq C \delta^n
	\end{equation}
   for all $n\in \N$. By Theorem~9 in Nummelin and Tweedie \cite{NummelinTweedie}, Eq.~\eqref{eq:pseudo_Geo_Erg} implies 
   \begin{equation}
	\int_{\R^k} \Bigl( (P-\lambda \1_R\otimes \nu)^n \1_R\Bigr)( x)  \nu(\dd x) =  \Or(r_0^{-n}).
  \end{equation}
  for some $r_0>1$. An induction over $n$ yields, for every probability measure $\mu$, 
  \begin{equation} \label{eq:distr_tau}
	     \Prob_{\mu^\ast} (\Tilde{\tau}_1 = n) = \int_{\R^k}\mu(\dd x) (P - \lambda \mathds{1}_R \otimes \nu)^n \lambda \mathds{1}_R (x).
\end{equation}
  It follows that $\tilde \tau_1$ has finite exponential moments under $\P_{\nu^\ast}$. Turning to $\mu = \pi$, we recall that the invariant probability measure $\pi$ satisfies $\pi(B) = m (B) \pi(R)$ with 
 \[
	m(B) = \sum_{n=0}^\infty \int_{\R^k} \nu(\dd x) \bigl( (P- \lambda\1_R\otimes \nu)^n \1_B\bigr)(x) 
\] 
see Nummelin~\cite[Section 2]{Nummelin78}, hence 
\begin{align*}
	\int_{\R^k} \pi(\dd x) \bigl( (P- \lambda\1_R\otimes \nu)^n \1_B\bigr)(x) &= \pi(R) \sum_{\ell=n}^\infty \int_{\R^k} \nu(\dd x) \bigl( (P -\lambda \1_R\otimes \nu)^\ell  \1_B\bigr)(x)  \\
	& = \Or (r_0^{-n}). 
\end{align*} 
We combine with Eq.~\eqref{eq:distr_tau} for $\mu=\pi$ and conclude that $\tilde \tau_1$ has finite exponential moments under $\Prob_{\pi^\ast}$. 
\end{proof} 

\subsection{The splitting of Markov random walk} \label{sec:splitMX} 

When the minorisation condition~\eqref{eq:minor} holds true for $r\geq 2$, it is natural to apply the splitting construction to the  \emph{skeleton chain} $(M_{nr})_{n\in \N_0}$. The trouble is that $(X_n)_{n\in \N_0}$ is not a deterministic function of the skeleton chain. To circumvent this difficulty, we keep track of the increments and apply the splitting construction to the chain $(M_{rn}, X_{rn}- X_{r(n-1)})$, $n\in \N_0$, with suitably chosen $r$, loosely following Niemi and Nummelin \cite{Niemi,NiemiNummelin}.

First, we need some notation. As before, let $M_n=(Z_{n-k+2},\ldots,Z_{n+1})$, $n\in \N_0$ be a Markov chain with state space $\R^k$ and transition kernel $P$, and let $X_0=0$, $X_n= Z_1+\cdots+ Z_n$. Define the kernel $K\colon \R^k\times \B_{\R^k\times \R}\to [0,1]$ by 
\[
	K(m, A\times B) := \int_{\R^k}  \1_A(z_1,\ldots, z_k) \1_B(z_{k-1}) P(m, \dd z_1\times \cdots \times \dd z_k).
\] 
The chain $(M_n,Z_n)$ is a Markov chain as well and 
\[
	\P( M_n \in A, Z_n \in B\mid M_{n-1}, Z_{n-1}) = K(M_{n-1}, A\times B)
\] 
almost surely. More generally, define $K^{\ast n}$, $n\in \N$, recursively by 
\begin{equation*}
	K^{\ast n} (m, A\times B) = \int_{\R^k\times \R} K^{\ast(n-1)} \bigl(m, \dd m' \times \dd z' \bigr) K (m', A \times (B-z')),
\end{equation*}
then for all $n\in \N$
\[
	\P\bigl(M_{rn} \in A, X_{rn} - X_{r(n-1)}\in B\mid M_{r(n-1)}, X_{r(n-1)}- X_{ r(n-2)}\bigr) = K^{*r}(M_{r(n-1)}, A \times B).
\] 
The chain $(M_n)$ is irreducible (see Section~\ref{sec:splitM}) and the kernel $K$ is spread out by Assumption~\ref{Massu:spread_out}. 
The minorisation lemma proven by Niemi \cite{Niemi} implies  that there exists an integer $r\geq 1$, a set $R$ with $\pi(R)>0$, a constant $\lambda\in (0,1)$ and an interval $\Gamma$ with positive Lebesgue measure $|\Gamma|>0$, such that for all $m \in \R^k$,
\begin{equation} \label{eq:spread_out_minor}
	K^{\ast r}(m, A\times B) \geq \lambda \nu(A) \eta(B), \quad \eta(B) \coloneqq \frac{|B\cap \Gamma|}{|\Gamma|},
\end{equation}
where $\nu(A) = \pi(A\cap R) / \pi(R)$  as in \eqref{eq:minor}.  For $n\in \N$, define
\[
	(\tilde M_n, \tilde Z_n):=  (M_{rn}, X_{rn} - X_{r(n-1)}) = (M_{rn}, Z_{r(n-1)+1}+\cdots + Z_{rn})
\] 
For $n=0$ we set $\tilde M_0 =0$ and $\tilde Z_0=0$ (which amounts to $X_{-r}:= X_0 =0$). (The choice $\tilde Z_0=0$ is arbitrary; other choices would work just as well because the transition kernel $K$ depends on $M_0$ only. ) Then $(\tilde M_n, \tilde Z_n)_{n\in \N_0}$ is a Markov chain with transition kernel $K^{\ast r}$ and, by~\eqref{eq:spread_out_minor}, it satisfies a minorisation condition with a minorising measure in product form $\nu \otimes \eta$. 

Before we apply the splitting construction to the chain $(\tilde M_n, \tilde Z_n)$, we check that the chain inherits the geometric ergodicity from $(M_n)_{n\in \N_0}$.

\begin{lem}  \label{lem:mwgeom} 
	Let $\tilde P$ be the kernel on $\R^k\times \R$ given by $\tilde P((m,z), C) = K^{\ast{r}}(m,C)$ and let $\tilde \pi$ be the measure on $\R^k\times \R$ given by $\tilde \pi(C) = \int_{\R^k} \pi(\dd m) K(m,C)$. Then there exists $\delta (0,1)$ and, for every $(m,z)\in \R^k\times \R$, a constant $C(m,z)<\infty$ so that 
	\[
		\forall n\in \N\colon\quad || {\tilde P}^n((m,z),\cdot) - \tilde \pi ||_\mathsf{TV}\leq C(m,z) \delta^n.
	\] 
\end{lem}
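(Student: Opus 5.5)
The plan is to reduce the statement to geometric ergodicity of the skeleton chain $(M_{rn})_{n\in\N_0}$, using that $\tilde P$ depends on $(m,z)$ only through $m$. Concretely, for $n\ge 1$ and $C\subset\R^k\times\R$ measurable, the Markov property gives
\[
	\tilde P^n((m,z),C) = \int_{\R^k} P^{r(n-1)}(m,\dd m')\, K^{\ast r}(m',C),
\]
since the first $n-1$ steps of $\tilde P$ move the $M$-component by $P^{r}$ each, irrespective of the $Z$-component, and the last step produces the pair (new state, increment) via $K^{\ast r}$. Here I read $\tilde \pi$ as $\int \pi(\dd m) K^{\ast r}(m,\cdot)$ (the stationary law of $(M_{rn},X_{rn}-X_{r(n-1)})$). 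The statement writes $K$, which I take to be a typo; if $r=1$ the two coincide. By invariance $\pi P^{r(n-1)}=\pi$, so we also have $\tilde \pi = \int \pi P^{r(n-1)}(\dd m') K^{\ast r}(m',\cdot)$.

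The key estimate is that the map $\mu\mapsto \mu K^{\ast r}$ from measures on $\R^k$ to measures on $\R^k\times\R$ is a Markov kernel, hence a contraction in total variation: $\|\mu K^{\ast r}-\mu' K^{\ast r}\|_{\mathsf{TV}}\le\|\mu-\mu'\|_{\mathsf{TV}}$. Applying this with $\mu=P^{r(n-1)}(m,\cdot)$ and $\mu'=\pi$ gives
\[
	\|\tilde P^n((m,z),\cdot)-\tilde\pi\|_{\mathsf{TV}}\le \|P^{r(n-1)}(m,\cdot)-\pi\|_{\mathsf{TV}}\le C(m)\,q^{r(n-1)}
\]
by Assumption~\ref{MAssu:RegularityMC}. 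For $n\ge 2$ we then set $\delta:=q^r\in(0,1)$ and $C(m,z):=\max\{C(m)q^{-r},\,2\delta^{-1}\}$. The case $n=1$ is covered trivially, since total variation distance is at most $2$ (or $1$, depending on normalisation).

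The only point needing care, and thus the main (mild) obstacle, is checking that $K^{\ast r}(m,\cdot)$ is genuinely the one-step law of the pair given $M_{r(n-1)}=m$, independently of the previous increment. This holds because $K$, and therefore $K^{\ast r}$, depends only on the $M$-coordinate, by the Markov property of $(M_n)$ and the fact that $Z_{n}$ is a coordinate of $M_{n-1}$ or $M_n$. This is exactly what makes the $z$-dependence of $C(m,z)$ vacuous. Measurability issues are irrelevant, since the bound is pointwise in $(m,z)$.
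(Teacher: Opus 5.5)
Your proof is correct and is essentially the paper's argument. Since $\tilde P$ depends only on the $m$-coordinate and $K^{\ast r}(m,A\times\R)=P^r(m,A)$, one gets $\tilde P^n((m,z),\cdot)=P^{r(n-1)}(m,\cdot)K^{\ast r}$, and the total-variation contraction of the Markov kernel $K^{\ast r}$ reduces everything to $\|P^{r(n-1)}(m,\cdot)-\pi\|_{\mathsf{TV}}$. Your reading $\tilde\pi=\pi K^{\ast r}$ is the one the paper's proof actually uses (the $K$ in the statement is a typo), and your explicit choice $\delta=q^r$ and separate treatment of $n=1$ fill in details the paper leaves implicit.
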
 

\begin{proof}
	First, we check that  $\tilde \pi$ is invariant.  To see this, first we observe that $K^{\ast r}(m,A \times \R) = P^r (m,A)$ for all $m\in \R^k$ and $A\in \B_{\R^k}$, hence $\tilde \pi(A\times \R) = \int \pi(\dd m) K^{\ast r}(m,A\times \R) = \pi P^ r(A) =\pi(A)$ and 
	\[
		\tilde \pi \tilde P(C) = \int   \tilde \pi( \dd m\times \dd z) K^ {\ast r}(m,C) = \int \pi(\dd m) K^{\ast}(m,C) =\tilde \pi(C). 
	\] 
	A straightforward induction over $n$ yields ${\tilde P}^n((m,z),A\times \R) = P^{nr}(m,A)$ for all $n\in \N_0$, $m\in \R^k$, $A\in \B_{\R^k}$. For $n=1$ this follows from the definition of $\tilde P$ and our earlier observation $K^{\ast r}(m,A \times \R) = P^r (m,A)$. For the induction step from $n-1$ to $n$, notice 
	\begin{align*}
			{\tilde P}^{n}((m,z), A \times \R)  & = \int {\tilde P}^{n-1}( (m,z), \dd m'\times \dd z') K^{\ast r}(m', A \times \R) \\
	& = \int P^{(n-1)r}(m, \dd m')P^r (m',A) = P^{rn}(m,A).
\end{align*}
Consequently, we have
\begin{align*}
	|{\tilde P}^n((m,z),C) - \tilde \pi(C) | & = \Bigl| \int  {\tilde P}^{n-1} ((m,z),\dd m' \times \dd z') K^{\ast r}(m',C) - \tilde \pi(C) \Bigr| \\
		& = \Bigl| \int P^{r(n-1)} (m,\dd m') K^{\ast r} (m',C) -\int \pi(\dd m') K^{\ast r} (m',C) \Bigr| \\
	& \leq 2 \norm{ P^{r(n-1)}(m,\cdot) - \pi }_{\mathsf{TV}},
\end{align*}
where we have used $|K^{\ast r}(m',C)|\leq 1$ and the functional definition of the total variation norm.
With this we conclude that $(\tilde M_n, \tilde Z_n)$ is geometrically ergodic if $(M_n)$ is.
\end{proof} 

We may now apply the splitting technique from Section~\ref{sec:splitM} to the chain $(\tilde M_n, \tilde Z_n)$. The regeneration set is $R\times \Gamma$. The construction yields a measurable space $(\Omega,\F)$, probability measures $\P_\mu$ on $(\Omega,\F)$ (one for each $\mu \in \R^k$), a process $(\tilde M_n,\tilde Z_n, \gamma_n)$, $n\in \N_0$, with state space $\R^k\times\R\times \{0,1\}$, random times $\tau_j\colon\Omega\to \N_0\cup \{\infty\}$  so that:
\begin{itemize}
	\item Under each $\P_\mu$, the chain $(\tilde M_n, \tilde Z_n,\gamma_n)$, $n\in \N_0$, is a time-homogeneous Markov chain, the law of $(\tilde M_0,\tilde Z_0)$ is $\mu\otimes \delta_0$, moreover the marginal chain $(\tilde M_n,\tilde Z_n)$  is a Markov chain with kernel $\tilde P$ (identified with $K^{\ast r}$). 
	\item Each $\tau_j$ is a stopping time with respect to the natural filtration $(\F_n)_{n\in \N_0}$ of $((\tilde M_n, \tilde Z_n,\gamma_n))_{n\in \N_0}$. Moreover $\tau_j$ is $\P_\mu$-almost surely finite, for each $\mu$. 
	\item For all $j\geq 1$ and all $A\in \B_{\R^k}$, $B\in \B_\R$, 
	\begin{equation} \label{eq_regeneration_MX}
		\P_\mu\bigl( \tilde M_{\tau_j}\in A, \tilde Z_{\tau_j}\in B\mid \F_{\tau_j-1}\bigr) = \nu(A) \eta(B)
	\end{equation} 
	$\P_\mu$-almost surely, for every $\mu$. 
\end{itemize}
The precise definition of the stopping times is $\tau_0:=0$ and then inductively 
\begin{equation} \label{eq:def_tau}
	\tau_{j+1}:= 1+ \inf \{n\geq \tau_j\colon\  \gamma_n =1 \}.
\end{equation} 
The geometric ergodicity from Lemma~\ref{lem:mwgeom} and arguments similar to the proof of Lemma~\ref{lem:tau_geom_tail} show that the regeneration times have exponential tails: there exists $\eps>0$ so that
\begin{equation} \label{eq:mwtail} 
	\E_\nu \bigl[ \exp(\eps \tau_1)\bigr] < \infty,\quad \E_\pi \bigl[ \exp(\eps \tau_1)\bigr] < \infty. 
\end{equation} 

\section{Decay of correlation of the point process}\label{sec:ProofNewMain} 

To prove Theorem \ref{thm:NewMain}, we prove a Markov renewal theorem first for strongly aperiodic chains (Section \ref{sec:renewal}) and then without assuming strong aperiodicity (Section~\ref{sec:genproof}). In Section~\ref{sec:NewMainProof}, we apply the Markov renewal theorem to prove Theorem~\ref{thm:NewMain}. 

\subsection{A Markov renewal theorem} \label{sec:renewal} 
Let 
\[
	U_\mu(A\times B) = \E_\mu\Bigl[ \sum_{n=0}^\infty \1_{\{M_n\in A,\, X_{n } \in B\}}\Bigr]
\] 
be the renewal measure of the Markov random walk when $M_0$ has law $\mu = \pi$ (the invariant measure of the driving chain $(M_n)$) or $\mu = \nu$ (the measure in the minorisation conditions~\eqref{eq:minor} in the strongly aperiodic case or~\eqref{eq:spread_out_minor} in general). For the proof of Theorem~\ref{thm:NewMain}, we only need $A= \R^k$, but the more general statement is of interest in its own right. 

\begin{thm} \label{thm:renewal}
	There exists $\eps>0$ such that, for every $q>0$ and every Borel set $A\subset \R^k$ as $t \to \infty$, we have for $\mu \in \{ \pi, \nu \}$
	\[
			U_\mu( A\times [t-q,t]) = \pi(A) \frac{q}{\E_\pi[Z_1]} + \Or( \e^{- \eps t})
	\]
	and for $t\to - \infty$,
	\[
		U_\mu\bigl(A\times [t-q,t]\bigr) = \Or(\e^{-\eps|t|}).
	\] 
\end{thm}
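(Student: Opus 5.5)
We will prove Theorem~\ref{thm:renewal} by regeneration, treating first the strongly aperiodic setting and then passing to the skeleton chain $(\tilde M_n,\tilde Z_n)$ of Section~\ref{sec:splitMX}. The regeneration times $\tau_j$ from~\eqref{eq:def_tau} split the path into cycles. By~\eqref{eq_regeneration_MX}, the post-regeneration blocks are i.i.d.\ under $\P_\mu$. Let $S_j := X_{r\tau_j}$ be the positions at regeneration. Then $(S_{j+1}-S_j)_{j\ge1}$ is an i.i.d.\ sequence, and its law is spread out because the increment $\tilde Z_{\tau_j}$ has the component $\eta$, which is uniform on $\Gamma$. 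The first increment $S_1$ is a delay whose law depends on $\mu$.

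Decomposing over cycles gives
\[
U_\mu(A\times[t-q,t]) = \E_\mu\Bigl[\sum_{n<r\tau_1}\1\{M_n\in A,X_n\in[t-q,t]\}\Bigr] + \int \mathcal U(\dd s)\, g_A(t-s),
\]
where $\mathcal U=\sum_{j\ge1}\P_\mu(S_j\in\cdot)$ is a delayed renewal measure. The function $g_A(u)$ is the expected number of visits to $A\times[u-q,u]$ during one generic cycle, started from $\nu\otimes\eta$ and measured relative to $S_j$.

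Stone's theorem for spread-out renewal processes with exponential moments \cite{Stone1966} then gives, for the renewal measure of the i.i.d.\ cycles,
\[
\mathcal U(\dd s) = \frac{\dd s}{\E[S_2-S_1]}\1_{s>0} + \rho(\dd s), \qquad |\rho|([t,t+1]) = \Or(\e^{-\eps|t|}).
\]
This needs an exponential moment $\E[\e^{\theta|S_2-S_1|}]<\infty$ for small $\theta$, and the analogous moment for the delay $S_1$. Once these hold, convolving with $g_A$ yields the linear term plus an $\Or(\e^{-\eps t})$ error, provided $g_A$ also decays exponentially. That decay, $|g_A(u)|\le \E[(\text{cycle length})\,\1\{\max_{n\text{ in cycle}}|X_n-S_j|\ge |u|-q\}]$, follows from the same exponential moments via Cauchy--Schwarz. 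The initial-cycle term is bounded in the same way. The constant is identified by the cycle formula (Kac/Nummelin) for $\pi$, namely $\int g_A(u)\,\dd u = q\,\E[\#\{n \text{ in cycle}: M_n\in A\}]$, so that $q\,\E[\#\text{cycle visits to }A]/\E[S_2-S_1]=q\pi(A)/\E_\pi[Z_1]$ by Wald's identity $\E[S_2-S_1]=\E[\text{cycle length}]\,\E_\pi[Z_1]$. The bound for $t\to-\infty$ comes from the same decomposition, since then only the exponentially small tails of the delay, of $g_A$, and of $\rho$ contribute.

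The main obstacle is the exponential moment of the cycle increment $S_2-S_1=\sum_{n\in\text{cycle}}Z_n$, whose length $\tau$ is random and correlated with the $Z_n$. I would use Hölder. For $|\theta|$ small, $\E[\e^{\theta\sum_{n<r\tau}Z_n}]\le\sum_N \E[\e^{\theta X_{rN}}\1\{\tau\ge N\}]$, and roughly
\[
\E[\e^{\theta X_{rN}}\1\{\tau\ge N\}] \le \E[\e^{p\theta X_{rN}}]^{1/p}\,\P(\tau\ge N)^{1/p'} .
\]
By Assumption~\ref{MAssu:ExpMoment}, the first factor grows like $\e^{c(p\theta)N/p}$, where $c(s)$ is the limsup in~\eqref{eq:kMGF}. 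By~\eqref{eq:mwtail}, the second factor decays like $\e^{-\eps N/p'}$. Choosing $\theta$ small makes the series converge, provided $c(s)\to0$ as $s\to0$. I would obtain this from convexity of $s\mapsto\limsup\frac1n\log\E\e^{sX_n}$: this function is finite on $(-\eps,\eps)$ and vanishes at $0$, hence it is continuous there. The moment is needed under the split measure started from $\nu$ or $\pi$ rather than under the stationary law, so Assumption~\ref{MAssu:ExpMoment} must be transferred. For $\pi$ this is a direct change of measure, since $\P_{\pi^*}$ has marginal equal to the stationary law. For $\nu$ I would use $\nu\le\pi/\pi(R)$. Finally, reinserting the $\gamma_n$-coins changes the law only by conditioning, with density bounded on events of the form $\{\tau\ge N\}$.
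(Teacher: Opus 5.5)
Your proposal is correct and is essentially the paper's proof: splitting of the skeleton chain $(\tilde M_n,\tilde Z_n)$, cyclic decomposition over the regeneration times $\tau_j$, and Stone's decomposition for the spread-out embedded walk $\tilde X_{\tau_j}$. The exponential moments come from H\"older/Cauchy--Schwarz against the geometric tail of $\tau_1$ together with continuity of the convex growth rate; the cycle function decays exponentially, and the constant is identified through the cycle formula for $\pi$. Two small remarks. The drift identity $\E[S_2-S_1]=r\,\E_\nu[\tau_1]\,\E_\pi[Z_1]$ is not Wald's identity, since the $Z_n$ are not i.i.d.; it is the same cycle formula applied to the increment coordinate of $M_n$. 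Also, the paper avoids coupling the intermediate full-chain steps with the split skeleton by averaging them out through $g^A(m,t)$ and $H^A_\mu$.
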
 

We prove the theorem first under the additional condition that the transition kernel $P(m,A)$ of the driving chain $(M_n)$ is strongly aperiodic, i.e., that in Eq.~\eqref{eq:minor} we may take $r=1$. 
(In our concrete situation, this implies $k=1$, i.e.\ $M_n=Z_{n+1}$, but we shall not exploit this explicitly.) The proof of the general case is postponed to Section~\ref{sec:genproof}. 
The fact that the interval $[t-q,t]$ is closed does not play a role, analogous statement can be made for open or half-open intervals.

For the proof we work with the split chain $(M_n^\ast)_{n\in \N_0}$ and the stopping times $\tilde \tau_j$ from Lemma~\ref{lem:splitM}. To lighten notation, we write $\Prob_\mu$ instead of $\Prob_{\mu^\ast}$, and we drop the tildas from the stopping times. In addition, define $\tau_0:=0$. The chain $(M_n^\ast)_{n\in \N_0}$ has state space $\R^k\times \{0,1\}$, let $M_n = (Z_{n-k+2},\ldots, Z_{n+1})\in \R^k$ be the marginal. By Lemma~\ref{lem:splitM}, $(M_n)_{n\in \N_0}$ is a Markov chain with transition kernel $P$, moreover the conditonal distribution of $M_{\tau_j}$ given $\tau_j-1$ is $\nu$. 

For $Z_0 \sim \nu$, the embedded chain $(X_{\tau_j })_{j\in \N}$ is a random walk with i.i.d.\ increments, the distribution is that of $Z_1+ \cdots + Z_{\tau_1}$ under $\P_\nu$.  For $M_0 \sim \pi$, the first increment $X_{\tau_1}$ has instead the distribution of $Z_1+\cdots + Z_{\tau_1}$ under $\P_\pi$. 

We first rewrite the renewal measure in terms of the embedded random walk by grouping terms between successive renewal times (this is sometimes called \emph{cyclic decomposition}, see e.g.\ \cite{Alsmeyer97}). We have 
\begin{align*} 
	U_\mu(A\times [t-q,t]) & = \E_\mu\Bigl[ \sum_{n=0}^{\tau_1-1} \1_{\{ M_n \in A,\, X_{n }\in [t-q,t]\}}\Bigr] + \sum_{j=1}^\infty \E_\mu  \Bigl[ \sum_{n=\tau_j}^{\tau_{j+1}-1} \1_{\{ M_n \in A,\, X_{n }\in [t-q,t]\}}\Bigr]. 
\end{align*} 
The first contribution on the right is kept as it is; the second contribution is rewritten in the next lemma. 
Define 
\begin{equation}\label{eq:cyclic}
	h_\mu^A(t) := \E_\mu\Bigl[ \sum_{n=0}^{\tau_1-1} \1_{\{ M_n \in A,\, X_{n }\in [t-q,t]\}}\Bigr].
\end{equation} 

\begin{lem} \label{lem:cyclic}
	For $\mu \in \{ \pi, \nu \}$: 
	\[
		U_\mu(A\times [t-q,t]) = h_\mu^A(t) + \sum_{j=1}^\infty \E_\mu \bigl[ h_\nu^A(t - X_{\tau_j })\bigr].
	\] 
\end{lem}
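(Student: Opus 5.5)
The plan is to start from the cyclic decomposition already written before the lemma, so that the first term is $h_\mu^A(t)$ by definition. The whole task is then to identify each cycle term
\[
	\E_\mu\Bigl[ \sum_{n=\tau_j}^{\tau_{j+1}-1} \1_{\{ M_n \in A,\, X_{n }\in [t-q,t]\}}\Bigr]
\]
with $\E_\mu[h_\nu^A(t-X_{\tau_j})]$. Interchanging the sum over $j$ with the expectation is harmless because all terms are non-negative (Tonelli).

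For a fixed $j\ge 1$, I will condition on $\F_{\tau_j-1}$. The random variable $X_{\tau_j}$ is measurable with respect to $\F_{\tau_j-1}$, since it equals $X_{\tau_j-1}+Z_{\tau_j}$. This is precisely where the index convention of Remark~\ref{rem:indexM} enters. The vector $M_{\tau_j-1}=(Z_{\tau_j-k+1},\ldots,Z_{\tau_j})$ already contains $Z_{\tau_j}$, so the increment up to time $\tau_j$ is known before regeneration, while the fresh state $M_{\tau_j}$ carries the new coordinate $Z_{\tau_j+1}$.

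Next I write $X_n = X_{\tau_j} + (X_n - X_{\tau_j})$ for $n\ge\tau_j$. The increments $X_n-X_{\tau_j}=Z_{\tau_j+1}+\cdots+Z_n$ are functions of the post-$\tau_j$ trajectory $(M_{\tau_j+m})_{m\ge0}$. Likewise $\tau_{j+1}-\tau_j$ is a function of the post-$\tau_j$ split chain, because it is defined by the next visit of the split chain to level $1$.

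By the strong Markov property at the stopping time $\tau_j$, combined with \eqref{eq:regeneration_M}, the conditional law of the shifted split chain $(M^\ast_{\tau_j+m})_{m\ge0}$ given $\F_{\tau_j-1}$ is that of the split chain started from $\nu^\ast$. The regeneration property fixes the marginal $\nu$ of $M_{\tau_j}$. The bell variable at time $\tau_j$ is then drawn via the lift $\nu^\ast$, because the kernel $\hat P((x,1),\cdot)=\nu^\ast$ does not depend on $x$.

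Hence, conditionally on $\F_{\tau_j-1}$ and with $s:=X_{\tau_j}$ frozen,
\[
	\E_\mu\Bigl[\sum_{n=\tau_j}^{\tau_{j+1}-1}\1_{\{M_n\in A,\,X_n\in[t-q,t]\}}\,\Big|\,\F_{\tau_j-1}\Bigr]
	= \E_\nu\Bigl[\sum_{m=0}^{\tau_1-1}\1_{\{M_m\in A,\,X_m\in[t-s-q,t-s]\}}\Bigr]\Big|_{s=X_{\tau_j}} = h_\nu^A(t-X_{\tau_j}).
\]
Taking expectations and summing over $j\ge1$ gives the claim.

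The main obstacle is making this conditioning step rigorous. One has to justify that the future after $\tau_j$ is conditionally independent of $\F_{\tau_j-1}$ and distributed as under $\P_{\nu^\ast}$, not merely that $M_{\tau_j}\sim\nu$. One also has to check carefully that $X_{\tau_j}$ is $\F_{\tau_j-1}$-measurable under the index convention \eqref{eq:defM}.

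I would handle this in two steps. First, I would use the fact that $\tau_j-1$ is itself a stopping time, since $\tau_j-1$ is a hitting time of level $1$. Second, I would apply the strong Markov property at $\tau_j-1$, where the chain sits in $\R^k\times\{1\}$ and therefore transitions by $\nu^\ast$ regardless of its position. This yields the $\P_{\nu^\ast}$-law of the future directly.
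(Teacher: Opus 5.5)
Your proposal is correct and follows the paper's proof. In both, each cycle term is conditioned on $\F_{\tau_j-1}$, one uses that $X_{\tau_j}$ is $\F_{\tau_j-1}$-measurable, and the strong Markov property together with \eqref{eq:regeneration_M} identifies the conditional expectation as $h_\nu^A(t-X_{\tau_j})$ before summing over $j$. Your extra step, applying the strong Markov property at the stopping time $\tau_j-1$ (where the split chain sits at level $1$ and jumps according to $\nu^\ast$), makes explicit what the paper's one-line argument leaves implicit: the whole post-$\tau_j$ trajectory, not just $M_{\tau_j}$, has conditional law $\P_{\nu^\ast}$.
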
 
\noindent Notice that, on the right side, for $\mu =\pi$ it is $h_\pi^A(t)$ for the first contribution but $h_\nu^A(t)$ inside the expected value. 

\begin{proof}
	Denote $(\F_n)_n$ the natural filtration of  the split chain $(M_n^\ast)_{n\in \N_0}$.  Remember from the proof of Lemma~\ref{lem:splitM} that each $\tau_j$ is a $(\F_n)_{n\in \N_0}$-stopping time.  By the strong Markov property for $(M_n^\ast)_{n\in \N_0}$ and regeneration property \eqref{eq:regeneration_M},
	\begin{equation*} 
		\E_{\mu} \Bigl[ \sum_{n=\tau_j}^{\tau_{j+1}-1} \1_{\{ M_n \in A,\, X_{n } - X_{\tau_j }\in [t-q- X_{\tau_j },t- X_{\tau_j }]\}} \mid \F_{\tau_j-1} \Bigr] 
		 = h_\nu^ A(t- X_{\tau_j} )
	\end{equation*} 
	$\P_\mu$-almost surely, and the lemma follows right away from the cyclic decomposition~\eqref{eq:cyclic}. 
\end{proof} 

\begin{lem} \label{lem:EeXtau}
	There exists $\eps>0$ such that for all $\delta \in (-\eps,\eps)$, 
	\[
		\E_\pi\bigl[ \exp\bigl( \delta X_{\tau_1 }\bigr)\bigr]< \infty, \quad \E_\nu \bigl[ \exp( \delta X_{\tau_1 })\bigr] < \infty.
	\] 	
\end{lem}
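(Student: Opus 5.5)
We combine the geometric tail of $\tau_1$ (Lemma~\ref{lem:tau_geom_tail}) with the exponential moment growth in Assumption~\ref{MAssu:ExpMoment} via H\"older's inequality. Fix $\mu\in\{\pi,\nu\}$ and write
\[
	\E_\mu\bigl[\e^{\delta X_{\tau_1}}\bigr] = \sum_{n=1}^\infty \E_\mu\bigl[\e^{\delta X_n}\1_{\{\tau_1=n\}}\bigr]
	\le \sum_{n=1}^\infty \E_\mu\bigl[\e^{p\delta X_n}\bigr]^{1/p}\,\P_\mu(\tau_1=n)^{1/p'},
\]
with $1/p+1/p'=1$, say $p=p'=2$. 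Lemma~\ref{lem:tau_geom_tail} gives $\P_\mu(\tau_1=n)\le C\e^{-\eps_0 n}$ for some $\eps_0>0$. It then suffices to show that $\E_\mu[\e^{2\delta X_n}]\le C'\e^{\kappa(\delta) n}$ with $\kappa(\delta)\to0$ as $\delta\to0$. Indeed, the summand is then bounded by $C''\exp\bigl(n(\kappa(\delta)/2-\eps_0/2)\bigr)$, which is summable once $|\delta|$ is small.

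\textbf{Step 1 ($\mu=\pi$).} Under $\P_\pi$ the marginal $(M_n)$ is stationary by \eqref{eq:split_marginal}, so $X_n=Z_1+\dots+Z_n$ has the stationary law. Assumption~\ref{MAssu:ExpMoment} gives $\E_\pi[\e^{sX_n}]\le C_s\e^{\Lambda(s)n}$ for $|s|<\eps$, with $\Lambda(s)<\infty$. We need $\Lambda(s)\to0$ as $s\to0$, not merely finiteness. For this, use H\"older/convexity: $s\mapsto \log\E[\e^{sX_n}]$ is convex and vanishes at $0$. Hence for $|s|\le s_0<\eps$,
\[
	\tfrac1n\log\E[\e^{sX_n}]\le \tfrac{|s|}{s_0}\max\bigl(\tfrac1n\log\E[\e^{s_0X_n}],\tfrac1n\log\E[\e^{-s_0X_n}]\bigr),
\]
so $\Lambda(s)\le |s|\,\Lambda^\ast/s_0$ uniformly in large $n$. (Finitely many small $n$ are handled by enlarging the constant, since each $\E[\e^{s_0 X_n}]$ is finite; the $\limsup$ hypothesis covers large $n$.) This yields the required $\kappa(\delta)=O(|\delta|)$.

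\textbf{Step 2 ($\mu=\nu$).} This is the main obstacle, since under $\P_\nu$ the chain is not stationary and Assumption~\ref{MAssu:ExpMoment} is stated only for the stationary sequence. I would use $\nu\le \pi/\pi(R)$, which holds because $\nu(A)=\pi(A\cap R)/\pi(R)$. Any nonnegative functional of the path of $(M_n)$ started from $\nu$ is then bounded by $\pi(R)^{-1}$ times the same functional under $\pi$. The subtlety is that $\tau_1$ is defined through the split chain, so the relevant expectation involves the coin variables as well. I would therefore apply the H\"older split first, so that only $\E_\nu[\e^{2\delta X_n}]$ is needed. By \eqref{eq:split_marginal} this is a functional of the marginal chain $(M_n)$ alone, hence at most $\pi(R)^{-1}\E_\pi[\e^{2\delta X_n}]$.

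The other factor, $\P_\nu(\tau_1=n)$, is controlled directly by Lemma~\ref{lem:tau_geom_tail}. One point requires care: $X_n$ involves $Z_1,\dots,Z_n$, which are coordinates of $M_0,\dots,M_{n-1}$, since $Z_{j}$ is the last coordinate of $M_{j-1}$. So it is indeed a function of the marginal chain's path, and the domination argument applies. Choosing $\eps$ smaller than both the Step 1 threshold and the value making $\kappa(\delta)<\eps_0$ completes the proof.
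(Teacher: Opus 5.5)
Your proof is correct and follows essentially the same route as the paper: Cauchy--Schwarz on the events $\{\tau_1=n\}$, the geometric tail of $\tau_1$ from Lemma~\ref{lem:tau_geom_tail}, and smallness of the exponential growth rate for small $|\delta|$. The paper obtains that smallness from convexity and continuity of $\alpha$, while your interpolation bound makes it explicit. The only real difference is the case $\nu$: the paper applies the domination directly to the whole expectation, $\E_\nu[\exp(\delta X_{\tau_1})]=\pi(R)^{-1}\E_\pi[\1_R(M_0)\exp(\delta X_{\tau_1})]$, which is legitimate because the split measures also satisfy $\nu^\ast\le\pi^\ast/\pi(R)$. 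So your extra caution about the coin variables is unnecessary, but it does no harm.
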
 

\noindent That is, the increments of the random walk $(X_{\tau_j})_{j\in\N_0}$ have finite exponential moments. 

\begin{proof} 
	The lemma follows from a Cauchy-Schwarz inequality, the geometric ergodicity of the chain $(M_n)_{n\in \N_0}$, and 
Assumption~\ref{MAssu:ExpMoment}] on exponential moments.  By Assumption~\ref{MAssu:ExpMoment}, the growth rate 
	\begin{equation} \label{eq:growth}
		\alpha(\delta) \coloneqq \limsup_{n\to \infty} \frac{1}{n} \log \E_{\pi} [\e^{\delta X_n}]
	\end{equation}
        is finite for all $\delta$ in some open neighborhood $(-\epsilon,\epsilon)$ of $0$. The function $\alpha(\cdot)$ is convex and finite on $(-\epsilon,\epsilon)$, hence continuous on $(-\eps,\eps)$, in particular $\alpha(\delta)\to \alpha(0) =0$ as $\delta \downarrow 0$.  By Lemma~\ref{lem:tau_geom_tail}, the regeneration time has an exponential tail: There exist constants $C,\beta>0$ such that 
    \begin{equation}      \label{eq:Kendall}
        \Prob_\pi (\tau_1= n) \leq C\e^{-\beta n}
    \end{equation}
for all $n\in \N$. 
    By Cauchy-Schwarz, for $\delta >0$ 
    \begin{align*}
        \E_\pi[\e^{\delta X_{\tau_1 }}]  & = \sum_{n=1}^{\infty} \E_\pi[\e^{\delta X_{\tau_1 }} \mathds{1}_{\{ \tau_1 = n \}}] 
    		\leq	\sum_{n=1}^{\infty} \E_\pi[\e^{2\delta X_{n }}]^{1/2} \Prob_\pi (\tau_1 = n)^{1/2}.
    \end{align*}
	Choose $\delta$ small enough so that $\alpha(2\delta) <\beta$, then each summand goes to zero exponentially fast as $n\to \infty$, hence the sum is finite. This proves the lemma for the initial law $\pi$. The statement for initial law $\nu$ follows, since  $\nu(A)= \pi(A\cap R) / \pi (R)$ and 
	\[
		\E_\nu\bigl[\exp(\delta X_{\tau_1})\bigr] = \frac{1}{\pi(R)}\E_\pi \bigl[ \1_{R}(M_0)\, \exp(\delta X_{\tau_1})\bigr] <\infty.  \qedhere
	\] 
\end{proof}

\begin{lem}\label{lem:hdecay}
	There exists $\delta>0$ so that as $|t|\to \infty$, 
	\[
		h^A_\pi(t) = \Or( \e^{-\delta |t|}), \quad h^A_\nu(t) = \Or(  \e^{-\delta|t|}). 
	\]
\end{lem}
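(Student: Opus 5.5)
We bound $h^A_\mu(t)$ by dropping the constraint $M_n\in A$ and estimating, for each $n<\tau_1$, the probability that $X_n$ is still far from the origin. Since $X_n\in[t-q,t]$ forces $|X_n|\ge |t|-q$ for $|t|$ large, we have
\[
	0\le h^A_\mu(t) \le \sum_{n=0}^\infty \P_\mu\bigl(\tau_1>n,\ |X_n|\geq |t|-q\bigr).
\]
The first step is to convert this into exponential moments. Markov's inequality with a small parameter $\delta>0$ gives, for $t\to+\infty$,
\[
	\P_\mu(\tau_1>n,\ X_n\ge t-q)\le \e^{-\delta(t-q)}\E_\mu\bigl[\e^{\delta X_n}\1_{\{\tau_1>n\}}\bigr].
\]
The case $t\to-\infty$ is symmetric, using $\e^{-\delta X_n}$. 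It therefore suffices to show that
\[
	S_\pm(\delta):=\sum_{n\ge0}\E_\mu\bigl[\e^{\pm\delta X_n}\1_{\{\tau_1>n\}}\bigr]<\infty
\]
for some small $\delta>0$ and $\mu\in\{\pi,\nu\}$. This gives $h^A_\mu(t)\le \e^{\delta q}S_\pm(\delta)\,\e^{-\delta|t|}$, uniformly in $A$.

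The second step proves finiteness of $S_\pm(\delta)$ by the same Cauchy--Schwarz argument as in Lemma~\ref{lem:EeXtau}. Cauchy--Schwarz gives
\[
	\E_\mu\bigl[\e^{\pm\delta X_n}\1_{\{\tau_1>n\}}\bigr]\le \E_\mu[\e^{\pm2\delta X_n}]^{1/2}\,\P_\mu(\tau_1>n)^{1/2}.
\]
For $\mu=\pi$, Lemma~\ref{lem:tau_geom_tail} (equivalently \eqref{eq:Kendall} summed) yields $\P_\pi(\tau_1>n)\le C'\e^{-\beta n}$. Assumption~\ref{MAssu:ExpMoment} together with the continuity of $\alpha$ at $0$ gives $\E_\pi[\e^{\pm2\delta X_n}]\le C_\delta\e^{(\alpha(\pm2\delta)+\eta)n}$, where $\eta>0$ is arbitrary. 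Here we note that $X_n$ under $\P_\pi$ has the law of $Z_1+\dots+Z_n$ under stationarity, since the split chain has the correct marginal by \eqref{eq:split_marginal}. Choosing $\delta$ and $\eta$ so small that $\alpha(\pm2\delta)+\eta<\beta$ makes the summands decay geometrically, so the sum converges.

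For $\mu=\nu$, we use $\nu\le \pi/\pi(R)$, or more precisely $\E_{\nu}[F]=\pi(R)^{-1}\E_\pi[\1_R(M_0)F]$ for functionals $F$ of the split chain. The lifts are compatible because $\nu$ is supported in $R$, exactly as at the end of the proof of Lemma~\ref{lem:EeXtau}. This gives $S_\pm(\delta)$ under $\nu$ bounded by $\pi(R)^{-1}$ times the corresponding quantity under $\pi$.

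The main obstacle is the identification used in the second step: that the exponential moment growth \eqref{eq:kMGF} and the geometric tail of $\tau_1$ hold on the same probability space with the correct laws. In particular, $X_n$ is defined from the marginal $M_n$ of the split chain, and it must be checked that this marginal has the stationary law under $\P_{\pi^\ast}$. This is guaranteed by \eqref{eq:split_marginal} together with the fact that $X_n$ is a measurable function of $(M_0,\dots,M_n)$. The extension to the marginal of the whole path, rather than just the one-dimensional marginals stated in \eqref{eq:split_marginal}, follows from the construction of $\hat P$ in \cite[Theorem 5.1.3]{MeynTweedie}, and I would note this explicitly. Once this is settled, the proof is the routine estimate above, with $\delta$ taken as the minimum of the values obtained for the two signs.
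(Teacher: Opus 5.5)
Your proposal is correct and follows essentially the same route as the paper: a Chernoff bound combined with Cauchy--Schwarz against the geometric tail of $\tau_1$ from Lemma~\ref{lem:tau_geom_tail}, the continuity of $\alpha$ at $0$, and the reduction from $\nu$ to $\pi$ via $\nu=\pi(\cdot\cap R)/\pi(R)$. The only differences are that you apply the exponential Markov inequality before Cauchy--Schwarz (the paper does it after, passing through $\P_\pi(X_n\geq t/2)$), which is immaterial, and that you spell out the $t\to-\infty$ case and the path-marginal property of the split chain, both of which the paper uses tacitly.
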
 

\begin{proof}
Since $\nu(\cdot) = \pi(\cdot \cap R) / \pi(R)$, it suffices to consider the initial law $\pi$.
We bound
\[
	h_\pi^A(t)  = \E_\pi\Bigl[ \sum_{n=0}^{\tau_1-1} \1_{\{M_n\in A, X_{n } \in [t-q,t]\}}\Bigr]  
	\leq \sum_{n=0}^\infty \P_\pi(X_{n }\in [t-q,t], \tau_1 > n). 
\]
For $t>2q$, we further bound 
\[
	h_\pi^A(t) \leq \sum_{n=0}^\infty \P_\pi\Bigl(X_{n }\geq \frac{t}{2}\Bigr)^{1/2} \P_\pi( \tau_1 > n)^{1/2}.
\]
By Assumption \ref{MAssu:ExpMoment} and a Chernoff bound, we have for all $\epsilon>0$ and all sufficiently large $n$,
\[
	\P_\pi\Bigl(X_n\geq \frac{t}{2}\Bigr) \leq \E_\pi \bigl[\e^{\epsilon X_{n}}\bigr] \e^{-\epsilon t/2} \leq \e^{\alpha(\epsilon)n +o(n) } \e^{-\epsilon t/2}
\]
with the growth rate $\alpha$ defined in \eqref{eq:growth}.
By continuity of $\alpha$, for sufficiently small $\epsilon$ the growth of $\P_\pi(X_n\geq \frac{t}{2})$ in $n$ is dominated by the decay of $\P_\pi( \tau_1 > n)^{1/2}$, leaving the exponential decay in $t$. The case of negative $t$ follows analogously.
\end{proof}

We now give the proof of Theorem \ref{thm:renewal} for the case of strongly aperiodic driving chains; the general case is treated in the next subsection.

\begin{proof}[Proof of Theorem~\ref{thm:renewal} for strongly aperiodic driving chain]	
	The theorem follows from Stone's decomposition applied to the embedded random walk $Y_j: = X_{\tau_j }$.
    We adapt the proof of Asmussen \cite[Theorem VII.2.10]{Asmussen}.
	Under $\P_\nu$, the chain $(Y_j)$ has independent increments that are i.i.d.\ except possibly for the first one. Indeed, 
    by the strong Markov property for the split chain $(M^\ast_n)_{n\in \N_0}$ from Section~\ref{sec:splitM} and by the regeneration property~\eqref{eq:regeneration_M}, we have for $\mu \in \{\pi,\nu\}$ and all Borel sets $A_1,A_2\subset \R$
    \begin{align*}
        \P_\mu \bigl(  X_{\tau_1}\in A_1, X_{\tau_2} -  X_{\tau_1} \in A_2\bigr)
         & = \E_\mu \Bigl[ \1_{\{Z_1+\cdots +Z_{\tau_1}\in A_1\} }  \P_\mu \bigl (Z_{\tau_1+1}+ \cdots +  Z_{\tau_2}\in A_2\mid \F_{\tau_1-1} \bigr) \Bigr] \\
         & = \E_\mu \Bigl[ \1_{\{X_{\tau_1}\in A_1\} }  \P_\nu \bigl (Z_{1}+ \cdots + Z_{\tau_1}\in A_2 \bigr) \Bigr]\\
         & = \P_\mu(X_{\tau_1}\in A_1) \P_\nu(X_{\tau_1} \in A_2), 
    \end{align*}
    hence the first two increments are independent. 
    Here we have used the fact that $M_{\tau_1-1} = Z_{\tau_1}$, which motivates the index choice. 
    A similar argument yields the independence of all increments, moreover they are i.i.d.\ except for the first one when $\mu =\pi$.

    The embedded walk $(Y_j)$ is spread out  with positive drift $m:=\E[Y_1]>0$. Its increments have finite exponential moments by Lemma~\ref{lem:EeXtau}.
	Therefore, by Stone's theorem \cite{Stone1966}, the renewal measure 
	\[
		U_0(B) = \delta_0 + \E_\nu\Bigl[ \sum_{j=1}^\infty \1_{\{Y_j\in B\}}\Bigr]
	\] 
	is the sum $U_0 = U_1+U_2$ of a finite measure $U_2$ and an absolutely continuous measure $U_1$ that has a continuous density $u_1$, and in addition there exists $\theta>0$ so that, as $t\to \infty$, 
	\[ 
		U_2\bigl([t,\infty) \bigr)= \Or(\e^{-\theta t}),\quad U_2\bigl((-\infty,-t] \bigr) = \Or(\e^{-\theta t})\]
	and 
	\[
		u_1(t) = \frac{1}{m}+\Or(\e^{-\theta t}),\quad u_1(-t) = \Or(\e^{-\theta t}). 
	\]
	By Lemma~\ref{lem:cyclic}, we can write $U_\nu(A\times [t-q,t])$ as convolution,
	\[
		U_\nu(A\times [t-q,t]) = h_\nu^A(t) + \sum_{j=1}^\infty \E_\nu \Bigl[ h_\nu^A(t- Y_j)\Bigr] 
		= \int_{-\infty}^\infty h_\nu^A(t-s)U_0(\dd s). 
	\]
	For the convolution $h_\nu^A* U_0$, we plug in Stone's decomposition.  
	Write $|h_\nu^A(t)|\leq C \exp( - \delta|t|)$ for all $t\in \R$. 
	We may assume without loss of generality that $\delta<\theta$ so that, in particular, $\int_\R \exp( \delta s) U_2(\dd s) <\infty$. 	
    Consider first the case $t>0$. 
	The contribution from $U_2$ is bounded by $C$ times 
	\begin{align*}
		 & \int_{-\infty}^\infty \e^{ - \delta |t-s|} U_2(\dd s) = \e^{-\delta t} \int_{-\infty}^t \e^{\delta s} U_2(\dd s)  + \e^{\delta t} \int_t^\infty \e^{- \delta s} U_2(\dd s) \\
		 & \qquad \leq \e^{-\delta t} \int_{-\infty}^\infty \e^{\delta s} U_2(\dd s) + \e^{\delta t} U_2\bigl([t,\infty) \bigr) \\
		 & \qquad = \Or( \e^{-\delta t}) + \Or(\e^{- (\theta-\delta) t}). 
	\end{align*}
	The contribution from $U_1$ is (remember $t>0$) 
	\[
		\int_\R h_\nu^A(t-s) U_1(\dd s) = \frac 1m \int_\R h_\nu^A(t-s) \dd s + \int_\R h_\nu^A(t-s) \bigl( u_1(s) - \frac 1 m \bigr) \dd s. 
	\]
	The first term on the right is simply $\frac 1m \int_\R h_\nu^A(t') \dd t'$. The second term is split into an integral over $s< 0$ and an integral over $s>0$. The integral over $\R_+$ goes to zero exponentially fast as $t\to \infty$ by arguments similar to $U_2$. The integral over $\R_-$ is bounded by 
	\[
		 C \Bigl( \frac 1m + ||u_1||_\infty\Bigr)  \int_{-\infty}^0  \e^{- \delta(t-s)} \dd s \leq \mathrm{const}\, \e^{-\delta t}. 
	\] 
	Thus, we have shown that as $t\to \infty$, for some $\eps>0$,
	\begin{equation}\label{eq:unu1}
		U_\nu(A\times [t-q,t]) = \frac{1}{m} \int_\R h_\nu^A(t') \dd t' + \Or( \e^{-\eps t}).
	\end{equation}
	A similar reasoning yields 
	\begin{equation} \label{eq:unu2}
		U_\nu(A\times [t-q,t]) = \Or(\e^{-\eps |t|})\quad( t\to - \infty).  
	\end{equation} 
	It remains to compute $m= \E_\nu[Y_1]$ and $\int_\R h_\nu^A(s) \dd s$. The integral is 
	\begin{align*}
		\int_\R h_\nu^A(t) \dd t & = \int_\R 	\E_\nu\Bigl[ \sum_{n=0}^{\tau_1-1} \1_{\{ M_n \in A,\, X_{n }\in [t-q,t]\}}\Bigr] \dd t  = q\, \E_\nu\Bigl[ \sum_{n=0}^{\tau_1-1} \1_{\{M_n\in A\}}\Bigr]
	\end{align*} 
	since $X_n\in [t-q,t]$ if and only if $t\in [X_n, X_n+q]$. The drift is 
	\[
		m = \E_\nu\Bigl[ \sum_{n=0}^{\tau_1-1} Z_n \Bigr].
	\] 
	By the general theory of Markov chains \cite[Theorem 10.2.1]{MeynTweedie}, one knows that the invariant measure is given by
	\begin{equation} \label{eq:pinu}
		\pi (A) = \frac{\E_\nu[ \sum_{n=0}^{\tau_1-1} \1_{\{M_n\in A\}}] }{\E_\nu[\tau_1]}. 
	\end{equation} 	
	Since $Z_n$ is a function of $M_n$, we also get $\E_\pi [Z_1] = \E_\nu[ \sum_{n=0}^{\tau_1-1} Z_n] / \E_\nu[Z_n]$. It follows that 
	\begin{equation} \label{eq:h_integral}
		\frac 1m \int_\R h_\nu^A(s) \dd s = \frac{\pi(A)}{\E_\pi[Z_1]}. 
	\end{equation}
	Together with Eqs.~\eqref{eq:unu1} and~\eqref{eq:unu2}, this completes the proof of the theorem for $M_0 \sim \nu$. 
	
	For $M_0 \sim \pi$, we consider a \emph{delayed} random walk, where the distribution of $Y_1$ is $\bar{\pi} (\cdot)= \P_{\pi}(X_{\tau_1 } \in \cdot)$, while the distribution of $Y_j$ for $j>1$ remain unchanged.
	In this case, the corresponding renewal measure has the form
	\[
		U_{\pi} (B) = \E_{\pi} \biggl[ \sum_{j=1}^{\infty} \1_{\{Y_j \in B\}} \biggr] = \bar{\pi} \ast U_0 (B).
	\]
	Then, by Lemma~\ref{lem:cyclic},
	\[
		U_\pi\bigl(A\times [t-q,t]\bigr) = h_\pi^A(t) + h_\nu^A \ast \bar{\pi} \ast U_0 (t).
	\]
	The function $h_\pi^A(t)= O(\exp(-\delta|t|))$ goes to zero exponentially fast as $|t|\to \infty$, by Lemma~\ref{lem:hdecay}.
	For the convolution, we can apply the same analysis as above with $h_\nu^A$ replaced by $h_\nu^A \ast \bar{\pi}$.
	To do so, we need to check that $h_\nu^A \ast \bar{\pi}(t)$ decays exponentially in $t$ and $\int h_\nu^A \ast \bar{\pi} (t) \dd t = \int h_\nu^A(t) \dd t$.

	For the exponential decay, we first observe that by Lemma~\ref{lem:EeXtau} we have for all $t>0$ and some $\delta>0$ 
	\[
		\bar{\pi}\bigl([t,\infty)\bigr) = \P_{\pi}(X_{\tau_1}  \geq t) = \Or(\e^{-\delta t}), \quad \bar{\pi}\bigl((-\infty,-t]\bigr) = \P_{\pi}(X_{\tau_1}  \leq -t) = \Or(\e^{-\delta t}).
	\] 
	Furthermore, $\bar{\pi}$ is finite.
	Hence, we can apply the same analysis as we did to the contribution of $U_2$ above and conclude that $h_\nu^A \ast \bar{\pi}(t)$ decays exponentially in $t$.

	By Fubini's theorem,
	\begin{equation*}
		\int_\R h_\nu^A \ast \bar{\pi} (t) \dd t = \int_\R h_\nu^A(t-s)\bar{\pi}(\dd s)\dd t = \int_\R \int_\R h_\nu^A(t) \dd t \bar{\pi}(\dd s) = \int_\R h_\nu^A(t) \dd t.
	\end{equation*}
	This concludes the proof for  $M_0 \sim \pi$.
\end{proof} 

\subsection{Proof of the renewal theorem for driving chains that are not strongly aperiodic} 
\label{sec:genproof}  

Next, let us turn to the general case where in the minorisation condition we may have $r\geq 2$. 
We reduce the proof to the previous case by finding appropriate functions $H^A_\mu$ such that an analogue of Lemma~\ref{lem:cyclic} holds with $H_\mu^ A$ replacing $h^A_\mu$.
Then, we check that $H^A_\mu$ satisfies the same properties of $h^A_\mu$ that are required in the proof of Theorem \ref{thm:renewal} for the strongly aperiodic driving chain case.

We first express the renewal measure $U_\mu$ in terms of the skeleton chain $(M_{rn},X_{rn } - X_{r(n-1) })_{r\in \N_0}$. We start from 
\begin{align*}
	U_\mu\bigl(A\times  [t-q,t]\bigr) & = \E_\mu\Bigl[ \sum_{n=0}^{r-1} \1_{\{ M_n\in A, X_{n }\in [t-q,t]\}}\Bigr]+  \sum_{j=1}^\infty  \E_\mu\Bigl[ \sum_{n=jr}^{(j+1)r-1} \1_{\{M_n\in A, X_{n }\in [t-q,t]\}}\Bigr].
\end{align*}
For $m \in \R^k$, let 
\[
	g^A(m,t):= \E_{\delta_{m}}\Bigl[ \sum_{n=0}^{r-1} \1_{\{M_n\in A, X_{n }\in [t-q,t]\}}\Bigr]. 
\] 
Then 
\begin{align*}
	&\E_\mu\Biggl[ \sum_{n=jr}^{(j+1)r-1} \1_{\{M_n\in A, X_{n }\in [t-q,t]\}}\Biggr]\\
	&\qquad  = \E_\mu\Biggl[ \E_\mu\Bigl[ \sum_{n=0}^{r-1} \1_{\{M_{jr+n}\in A, X_{jr +n}-X_{jr }\in [t-X_{jr }-q, t- X_{jr }]\}} \, \Big|\, M_\ell, \ell \leq jr\Bigr]\Biggr] \\
	&\qquad = \E_\mu \Bigl[ g^A(M_{jr},t - X_{jr })\Bigr]. 
\end{align*} 
Thus, 
\begin{equation} \label{eq:umurewrite}
	U_\mu\bigl(A\times [t-q,t]\bigr) = \E_\mu\Bigl[ \sum_{j=0}^\infty g^A(M_{jr}, t- X_{jr })\Bigr].
\end{equation}
Recall that $(M_{nr},X_{rn } - X_{r(n-1) })_{n\in\N_0}$ is a strongly aperiodic Markov chain with transition kernel $K^{\ast r}$. We may now apply the splitting construction for the Markov random walk from Section~\ref{sec:splitMX}. Thus let $\tilde M_n$, $\tilde Z_n$ and $\tau_j$ be as in Section~\ref{sec:splitMX} and 
\[
    \tilde X_0:=0,\quad \tilde X_n:= \tilde Z_1+\cdots + \tilde Z_n\quad (n\in \N).
\]
Recall that $(\tilde M_n, \tilde X_n)$ is equal in distribution to $(M_{nr}, X_{nr})$.  Let
  \begin{equation*}
    	H_\mu^A(t) \coloneqq \E_\mu \biggl[ \sum_{j=0}^{\tau_1-1} g^A (\tilde M_{j}, t-\tilde X_{j }) \biggr].
   \end{equation*}
Remember the measure $\nu$ from the minorisation condition~\eqref{eq:spread_out_minor}. 
The following lemma is proven with a cyclic decomposition in the same way as Lemma~\ref{lem:cyclic}, the proof is therefore omitted. 

\begin{lem} \label{lem:H_cyclic}
    For $\mu \in \{\nu,\pi\}$ we have
    \begin{equation*}
    	U_\mu \bigl(A \times [t-q,t]\bigr) = H_\mu^A(t) + \E_\mu \biggl[ \sum_{\ell=1}^\infty H_\nu^A (t- \tilde X_{\tau_\ell }) \biggr].
    \end{equation*}
\end{lem}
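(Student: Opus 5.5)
The starting point is the skeleton representation~\eqref{eq:umurewrite},
\[
U_\mu\bigl(A\times[t-q,t]\bigr)=\E_\mu\Bigl[\sum_{j=0}^\infty g^A(\tilde M_j,t-\tilde X_j)\Bigr].
\]
Here we use that $(\tilde M_n,\tilde X_n)$ has the same law as $(M_{nr},X_{nr})$. Moreover, $g^A(m,\cdot)$ depends only on the path of the original chain over $r$ steps started from $m$, so it is a deterministic function of $(\tilde M_j,\tilde X_j)$. We then split the sum over $j$ into cycles between consecutive regeneration times $\tau_\ell$ of the split chain $(\tilde M_n,\tilde Z_n,\gamma_n)$ from Section~\ref{sec:splitMX}:
\[
\sum_{j=0}^\infty=\sum_{j=0}^{\tau_1-1}+\sum_{\ell=1}^\infty\sum_{j=\tau_\ell}^{\tau_{\ell+1}-1}.
\]
The first block has expectation $H_\mu^A(t)$ by definition. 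Interchanging expectation and the sum over $\ell$ is justified by Tonelli, since $g^A\ge 0$.

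For the $\ell$-th block, $\ell\ge1$, we condition on $\F_{\tau_\ell-1}$. We write $\tilde X_j=\tilde X_{\tau_\ell}+(\tilde X_j-\tilde X_{\tau_\ell})$ for $j\ge\tau_\ell$ and use the strong Markov property of the split chain at the stopping time $\tau_\ell$. The post-$\tau_\ell$ evolution of $(\tilde M_j,\tilde X_j-\tilde X_{\tau_\ell})_{j\ge\tau_\ell}$ and the next regeneration time $\tau_{\ell+1}-\tau_\ell$ depend only on $\tilde M_{\tau_\ell}$ and the split-chain coin. They do not depend on $\tilde Z_{\tau_\ell}$, because the kernel $K^{\ast r}$ depends only on the $\R^k$-component.

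By the regeneration property~\eqref{eq_regeneration_MX}, conditionally on $\F_{\tau_\ell-1}$ the pair $(\tilde M_{\tau_\ell},\tilde Z_{\tau_\ell})$ has law $\nu\otimes\eta$. It is therefore independent of the past, and $\tilde M_{\tau_\ell}\sim\nu$, which is the point of Remark~\ref{rem:indexM}. Hence the post-$\tau_\ell$ block, shifted by $\tilde X_{\tau_\ell}$, is distributed as the first cycle under $\P_\nu$, and
\[
\E_\mu\Bigl[\sum_{j=\tau_\ell}^{\tau_{\ell+1}-1}g^A(\tilde M_j,t-\tilde X_j)\,\Big|\,\F_{\tau_\ell}\Bigr]=H_\nu^A(t-\tilde X_{\tau_\ell})\quad\P_\mu\text{-a.s.}
\]
Here $\tilde X_{\tau_\ell}$ is $\F_{\tau_\ell}$-measurable, and the dependence on $\tilde Z_{\tau_\ell}$ enters only through this shift. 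Taking expectations and summing over $\ell$ gives the claim.

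The main subtlety is bookkeeping. One has to check that the split chain's cycle after $\tau_\ell$ indeed starts afresh with $\gamma$ distributed as under the lifted initial law $\nu^\ast$. One also has to check that the label $\mu=\nu$ in $H_\nu^A$ means the split chain started from $(\nu\otimes\delta_0)^\ast$. The $\eta$-distributed increment $\tilde Z_{\tau_\ell}$ is absorbed into $\tilde X_{\tau_\ell}$ rather than into the cycle. Once these conventions are made consistent with the definition of $\tau_{j+1}$ in~\eqref{eq:def_tau}, the argument is identical to that of Lemma~\ref{lem:cyclic}.
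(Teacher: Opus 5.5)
Your skeleton rewrite via~\eqref{eq:umurewrite} and the cycle decomposition at the $\tau_\ell$ are what the paper intends; it omits the proof and refers to Lemma~\ref{lem:cyclic}. The Tonelli step is fine. The gap is in the one step where this lemma is \emph{not} a verbatim copy of Lemma~\ref{lem:cyclic}. Your display conditions on $\F_{\tau_\ell}$, but $\tilde M_{\tau_\ell}$ and $\gamma_{\tau_\ell}$ are $\F_{\tau_\ell}$-measurable. By the strong Markov property the left side therefore equals $\Psi(\tilde M_{\tau_\ell},\gamma_{\tau_\ell};t-\tilde X_{\tau_\ell})$, where $\Psi(m,\gamma;s)$ is the expected cycle sum started in $(m,\gamma)$. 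This genuinely depends on $(\tilde M_{\tau_\ell},\gamma_{\tau_\ell})$: on $\{\gamma_{\tau_\ell}=1\}$ the block is the single term $g^A(\tilde M_{\tau_\ell},t-\tilde X_{\tau_\ell})$. So it is not the averaged quantity $H_\nu^A(t-\tilde X_{\tau_\ell})$, and your display is false as written.

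Falling back to $\F_{\tau_\ell-1}$, as in Lemma~\ref{lem:cyclic}, fails too. There $X_{\tau_j}$ was $\F_{\tau_j-1}$-measurable, whereas here $\tilde X_{\tau_\ell}=\tilde X_{\tau_\ell-1}+\tilde Z_{\tau_\ell}$ contains the freshly regenerated increment. The right $\sigma$-algebra is the intermediate one, $\mathcal G_\ell:=\F_{\tau_\ell-1}\vee\sigma(\tilde Z_{\tau_\ell})$. The regeneration law~\eqref{eq_regeneration_MX} is the \emph{product} $\nu\otimes\eta$. Hence, conditionally on $\mathcal G_\ell$, $\tilde M_{\tau_\ell}$ is still $\nu$-distributed and $\gamma_{\tau_\ell}$ is Bernoulli$(\lambda\1_R(\tilde M_{\tau_\ell}))$, because $\tilde Z_{\tau_\ell}\in\Gamma$ a.s. Meanwhile $\tilde X_{\tau_\ell}$ is $\mathcal G_\ell$-measurable, so conditioning $\Psi$ on $\mathcal G_\ell$ yields $H_\nu^A(t-\tilde X_{\tau_\ell})$. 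This use of the product structure is exactly what the lemma adds to Lemma~\ref{lem:cyclic}, and your write-up should contain it rather than call the argument identical.

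Your bookkeeping remark also needs a fix, not just a check. With the regeneration set $R\times\Gamma$ of Section~\ref{sec:splitMX}, the split kernel is not blind to the $\R$-coordinate: the coin, and the transition from $\gamma=0$, depend on $\1_\Gamma(\tilde Z)$. Starting from $(\nu\otimes\delta_0)^\ast$ gives $\gamma_0=1$ with probability $\lambda\1_\Gamma(0)$, whereas $\gamma_{\tau_\ell}=1$ with probability $\lambda$. If $0\notin\Gamma$, then $\tau_1\ge 2$ under that $\P_\nu$, while $\tau_{\ell+1}-\tau_\ell=1$ with probability $\lambda$. So the later cycles are not copies of the first cycle under $\P_\nu$, and the identity fails for that $H_\nu^A$. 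There are two ways to repair this. You can define $\P_\nu$ by starting from $(\nu\otimes\eta)^\ast$ while keeping $\tilde X_0:=0$. Alternatively, you can split with respect to $R\times\R$, which is legitimate because $K^{\ast r}$ does not depend on the $\R$-coordinate. With either convention, the argument via $\mathcal G_\ell$ above goes through.
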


We also need the following two lemmas.

\begin{lem}\label{lem:Hdecay}
	There exists $\delta>0$ so that as $|t|\to \infty$, 
	\[
		H^A_\pi(t) = \Or( \e^{-\delta |t|}), \quad H^A_\nu(t) = \Or(  \e^{-\delta|t|}). 
	\]
\end{lem}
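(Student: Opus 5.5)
The plan follows the proof of Lemma~\ref{lem:hdecay}, with the skeleton chain in place of the original chain and $g^A$ unfolded back into the original time scale. First, since $g^A(m,t')\le \sum_{n=0}^{r-1}\P_{\delta_m}(X_n\in[t'-q,t'])$ and $(\tilde M_j,\tilde X_j)$ has the law of $(M_{jr},X_{jr})$, we can write $H^A_\mu(t)$ as an expectation over the original chain. Concretely, conditioning on the skeleton up to time $jr$ and using the Markov property of the split chain gives
\[
	H_\mu^A(t) \le \sum_{j=0}^\infty \sum_{n=0}^{r-1} \P_\mu\bigl( X_{jr+n}\in[t-q,t],\ \tau_1>j\bigr).
\]
The point to check is that the event $\{\tau_1>j\}$ is measurable with respect to the split skeleton up to time $j$. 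The subsequent increments of the original chain after time $jr$ are then generated by $P$ from $M_{jr}$, so this bound holds. Strictly, the split chain's conditional law of the intermediate $Z$'s given the next skeleton point is modified, but we only need an upper bound by $\P_\mu(X_{jr+n}\in[t-q,t],\tau_1>j)$ on a common probability space. In that space $X_{jr+n}$ is realised consistently with $\tilde X_{j+1}$, and we take this as given by the construction of Section~\ref{sec:splitMX}.

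Second, for $|t|>2q$ we apply Cauchy--Schwarz to each summand:
\[
	\P_\mu\bigl(X_{jr+n}\in[t-q,t],\,\tau_1>j\bigr) \le \P_\mu\bigl(|X_{jr+n}|\ge |t|/2\bigr)^{1/2}\,\P_\mu(\tau_1>j)^{1/2}.
\]
For $\mu=\pi$ the first factor is controlled by Assumption~\ref{MAssu:ExpMoment} and a Chernoff bound. For small $\eps>0$ this gives $\P_\pi(|X_N|\ge |t|/2)\le 2\e^{\alpha_\pm(\eps)N+o(N)}\e^{-\eps|t|/2}$, where $\alpha_\pm(\eps)=\alpha(\pm\eps)$ is the growth rate from \eqref{eq:growth}. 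The $o(N)$ term is absorbed into a constant $C_\eps \e^{(\alpha(\pm\eps)+\eps')N}$. The second factor is at most $C\e^{-\beta j/2}$ by \eqref{eq:mwtail}. By convexity and continuity of $\alpha$ at $0$, with $\alpha(0)=0$, we can choose $\eps$ so small that $r(\alpha(\pm\eps)+\eps')/2<\beta/4$. The double sum then converges, and $H^A_\pi(t)\le C'\e^{-\eps|t|/4}$.

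Third, the case $\mu=\nu$ reduces to $\mu=\pi$ as in Lemma~\ref{lem:hdecay}. Since $\nu=\pi(\cdot\cap R)/\pi(R)$, we have $\P_\nu(\cdot)\le \pi(R)^{-1}\P_\pi(\cdot)$ for events of the split chain started from the lifted measures. Here we should check that the lift of $\nu$ is dominated by $\pi(R)^{-1}$ times the lift of $\pi$, which is immediate from the definition of $\mu^\ast$, now applied to $\mu\otimes\delta_0$ for the bivariate chain.

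The main obstacle is the first step: justifying that inserting the split chain's regeneration event $\{\tau_1>j\}$ does not break the Markov description of the original increments between skeleton times. If that coupling issue proves delicate, a fallback avoids it. Bound $g^A(\tilde M_j,t-\tilde X_j)\le r\,\1\{\max_{0\le n<r}|X_{jr+n}-X_{jr}|\ge |t|/4\}+r\,\1\{|\tilde X_j|\ge |t|/4\}$. The first term is handled by Cauchy--Schwarz against $\tau_1>j$ and stationarity of the increments under $\P_\pi$. This requires only exponential moments of finitely many $Z$'s, which follow from Assumption~\ref{MAssu:ExpMoment} by Hölder for small $\eps$. The second term involves only the skeleton and is handled exactly as above.
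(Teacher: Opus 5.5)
Your proposal is correct and follows essentially the paper's route: separate $\{\tau_1>j\}$ by Cauchy--Schwarz, reduce to $\P_\mu(X_{jr+\ell}\in[t-q,t])$ using that the skeleton $(\tilde M_j,\tilde X_j)$ has the law of $(M_{jr},X_{jr})$, and conclude with the Chernoff bound from Assumption~\ref{MAssu:ExpMoment}, the geometric tail of $\tau_1$, and $\nu\le\pi(R)^{-1}\pi$. The coupling issue in your first step does not arise in the paper's argument, which is in effect your fallback: apply Cauchy--Schwarz directly to $\E_\mu[\1_{\{\tau_1>j\}}\,g_\ell^A(\tilde M_j,t-\tilde X_j)]$, where $g_\ell^A(m,t')=\P_{\delta_m}(M_\ell\in A,\,X_\ell\in[t'-q,t'])\in[0,1]$. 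Then $(g_\ell^A)^2\le g_\ell^A$, and $\E_\mu[g_\ell^A(\tilde M_j,t-\tilde X_j)]=\P_\mu(M_{jr+\ell}\in A,\,X_{jr+\ell}\in[t-q,t])$ uses only the marginal law of the skeleton, so no joint realisation of the intermediate steps is needed.
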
 

\begin{proof}
	Define $g_\ell^ A(m,t) = \P_{\delta_m}(M_\ell \in A, X_\ell \in [t-q,t]  )$ so that  $g^A(m,t) = \sum_{\ell =0}^{r-1} g_\ell^A(m,t)$.
	We apply  a Cauchy-Schwarz inequality and use $(g_\ell^ A)^2 \leq g_\ell^ A$, this gives 
	\begin{align*}
		H_\mu^A(t) & = \sum_{j=0}^\infty \sum_{\ell=0}^{r-1}   \E_\mu \Bigl[  \1_{\{\tau_1 > j\}} g_\ell^ A(\tilde M_j, t- \tilde X_j)    \Bigr]  \\
			 & \leq \sum_{j=0}^\infty \sum_{\ell=0}^{r-1} (\E_\mu \bigl[ g_\ell^A(\tilde M_j, t- \tilde X_j) \bigr])^{1/2} \P_\mu(\tau_1 >j )^{1/2}. 
	\end{align*} 
	We remember the chain $(\tilde M_j, \tilde X_j)$ is equal in distribution to $(M_{jr}, X_{jr})$, combine with the definition of $g_\ell^A$ and a Markov property and deduce 
	\[
		\E_\mu \bigl[ g_\ell^A(\tilde M_j, t- \tilde X_j) \bigr] = \E_\mu\bigl[  \1_{\{M_{jr+\ell} \in A, t - X_{jr+\ell}\in [t-q,t] \} }\bigr]  \leq \P_\mu (X_{jr+\ell}\in [t-q,t])
	\] 
	which gives 
	\[
		H_\mu^A(t) \leq \sum_{j=0}^\infty \sum_{\ell=0}^{r-1}  \P_\mu ( X_{jr+\ell} \in [t-q,t])^{\frac{1}{2}} \P_\mu(\tau_1 >j)^{\frac{1}{2}}. 
	\]
	The regeneration time $\tau_1$ has geometric tails by Lemma~\ref{lem:tau_geom_tail} and the desired exponential decay follows from arguments similar to the proof of Lemma~\ref{lem:hdecay}. 
\end{proof}

\begin{lem}\label{lem:H_integral}
	We have 
	\begin{equation*}
		\frac{1}{\E_\nu [\tilde X_{\tau_1 }]} \int_\R H_{\nu}^A(t) \dd t = q\, \frac{\pi(A)}{\E_\pi[Z_1]}.
	\end{equation*}
\end{lem}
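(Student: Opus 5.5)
The plan is to compute both sides separately, following the computation of $\int_\R h_\nu^A$ and $m$ in the strongly aperiodic case, and to identify them through the regenerative representation of the invariant measure of the skeleton split chain.

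\textbf{The numerator.} By Fubini,
\[
\int_\R g^A(m,t)\,\dd t = q\,\E_{\delta_m}\Bigl[\sum_{n=0}^{r-1}\1_{\{M_n\in A\}}\Bigr],
\]
since for fixed $X_n$ the set of $t$ with $X_n\in[t-q,t]$ is an interval of length $q$. Applying Fubini once more (everything is nonnegative) gives
\[
\int_\R H^A_\nu(t)\,\dd t = q\,\E_\nu\Bigl[\sum_{j=0}^{\tau_1-1} G^A(\tilde M_j)\Bigr],\qquad G^A(m):=\E_{\delta_m}\Bigl[\sum_{n=0}^{r-1}\1_{\{M_n\in A\}}\Bigr].
\]

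\textbf{Identifying the numerator.} The chain $(\tilde M_j,\tilde Z_j)$ is a positive Harris chain with invariant law $\tilde\pi$ by Lemma~\ref{lem:mwgeom}, and its first marginal is $\pi$. Regeneration at the times $\tau_j$ (the analogue of \eqref{eq:pinu} for the split skeleton chain, \cite[Theorem 10.2.1]{MeynTweedie}) gives, for any nonnegative $f$ on $\R^k$,
\[
\E_\nu\Bigl[\sum_{j=0}^{\tau_1-1} f(\tilde M_j)\Bigr] = \E_\nu[\tau_1]\,\int f\,\dd\pi.
\]
Two points need checking here. First, the chain started from $\nu\otimes\eta$ (the law at regeneration) must be the right reference measure; since $g^A$ and $G^A$ depend only on $m$, the $\tilde Z_0$ coordinate is irrelevant. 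Second, one must justify replacing $g^A(\tilde M_j,\cdot)$ by its conditional meaning. The function $g^A$ is defined with a fresh chain started at $\tilde M_j$, so no conditioning issue arises with the split variables; only the deterministic function $G^A$ enters.

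With $f=G^A$ and stationarity of $\pi$,
\[
\int G^A\,\dd\pi = \sum_{n=0}^{r-1}\pi P^n(A) = r\,\pi(A).
\]
Hence
\[
\int_\R H_\nu^A = q\,r\,\pi(A)\,\E_\nu[\tau_1].
\]

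\textbf{The denominator.} Applying the same identity to the pair chain,
\[
\E_\nu[\tilde X_{\tau_1}] = \E_\nu\Bigl[\sum_{j=1}^{\tau_1}\tilde Z_j\Bigr] = \E_\nu[\tau_1]\,\E_{\tilde\pi}[\tilde Z_1] = \E_\nu[\tau_1]\, r\,\E_\pi[Z_1].
\]
The last step holds because under stationarity $\tilde Z_1 = Z_{1}+\cdots+Z_r$ has mean $r\E_\pi[Z_1]$. Integrability follows from Lemma~\ref{lem:EeXtau}, adapted to the skeleton via \eqref{eq:mwtail}, which justifies the Wald-type identity. Dividing, the factors $r$ and $\E_\nu[\tau_1]$ cancel and give $q\pi(A)/\E_\pi[Z_1]$.

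\textbf{Main obstacle.} The delicate step is the index bookkeeping in the regenerative identity. The sum runs over $j=0,\dots,\tau_1-1$ for $H$ but over $j=1,\dots,\tau_1$ for $\tilde X_{\tau_1}$. The two formulations agree because cycles are shift-equivalent under $\P_\nu$: the state at time $\tau_1$ has law $\nu\otimes\eta$, the same as at time $0$. A second point is ensuring that the reference law for $\tilde Z_1$ within a cycle is the stationary law of $Z_1+\cdots+Z_r$. I would handle both by writing the invariant measure of the split skeleton pair chain explicitly via cycles and comparing with $\tilde\pi$.
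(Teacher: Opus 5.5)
Your proposal is correct and follows the paper's own proof essentially line by line: Fubini turns $\int_\R H_\nu^A$ into $q\,\E_\nu\bigl[\sum_{j<\tau_1} G^A(\tilde M_j)\bigr]$, the regenerative representation of the invariant law of the split skeleton chain gives $q\,r\,\pi(A)\,\E_\nu[\tau_1]$, and the same representation, after the index shift justified by the regeneration law at $\tau_1$, gives $\E_\nu[\tilde X_{\tau_1}] = r\,\E_\pi[Z_1]\,\E_\nu[\tau_1]$. The one point to sharpen is your remark that $\tilde Z_0$ is irrelevant: since the regeneration set is $R\times\Gamma$, $\tilde Z_0$ affects $\tau_1$ through the bell variable at time $0$, so $\P_\nu$ really has to be the split chain started from $(\nu\otimes\eta)^\ast$ rather than $\nu\otimes\delta_0$, which is what you assume and what the paper's step $\E_\nu[\tilde Z_0]=\E_\nu[\tilde Z_{\tau_1}]$ implicitly uses.
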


\begin{proof}
	Let $g_\ell^ A(m,t)$ be as in the proof of Lemma~\ref{lem:Hdecay}. Notice 
	\[
		\int_\R g_\ell^A(m,t) \dd t = \E_{\delta_m}\Bigl[ \1_{\{M_\ell \in A, X_\ell \in [t-q,t]\}}\Bigr] \dd t = \P_{\delta_m}( M_\ell \in A). 
	\] 
	Define $f_\ell^ A(m):= \P_{\delta_m}(M_\ell \in A)$ and $f^A(m) := \sum_{\ell =0}^{r-1} f_\ell^ A(m)$.  Then 
	\[
		 \int_\R H_\nu^A(t) \dd t  = \int_\R \E_\nu \Bigl[ \sum_{j=0}^{\tau_1 - 1} \sum_{\ell=0}^{r-1} g_\ell^ A(\tilde M_j, t - \tilde X_j)     \Bigr]\dd t 
	 = q\,  \E_\nu \biggl[ \sum_{j=0}^{\tau_1-1}  f^A(\tilde M_{j}) \biggr].
	\]
	By the analogue of~\eqref{eq:pinu} for $(\tilde M_j)$, 
	\[
		\E_\nu \biggl[ \sum_{j=0}^{\tau_1-1}  f^A(\tilde M_{j}) \biggr] = \E_\pi \bigl[ f^A(\tilde M_0)\bigr] \E_\nu[\tau_1]. 
	\]
	We may replace $\tilde M_0$ with $M_0$, plug in the definition of $f^A$, combine with the Markov equality, and obtain 
	\[
		\E_\pi \bigl [ f^ A(\tilde M_0) \bigr] = \E_\pi \Bigl[\sum_{\ell =0}^{r-1} \1_{\{M_\ell \in A\}}\Bigr]  = r \P_\pi( M_\ell \in A),
	\]
	hence altogether 
	\[
		\int_\R H_\nu^A(t) \dd t = q r \pi(A) \E_\nu[\tau_1]. 
	\] 
	Turning to the drift, remember the increments $\tilde Z_n = \tilde X_n - \tilde X_{n-1}$ from Section~\ref{sec:splitMX}, equal in distribution to $Z_{rn+1}+\cdots + Z_ {(r+1)n}$. Notice that $\E_\nu[\tilde Z_0] = \E_\nu [\tilde Z_{\tau_1}]$ by the strong Markov property and because conditional on $\tau_1$, the conditional law of $M_{\tau_j}$  is $\nu$. Therefore 
	\[
		\E_\nu [\tilde X_{\tau_1}\bigr]  = \E_\nu \biggl[\sum_{n=1}^{\tau_1} \tilde Z_n \biggr] = \E_\nu \biggl[\sum_{n=0}^{\tau_1-1} \tilde Z_n \biggr]
			 = \E_\pi [\tilde Z_0] \E_\nu[\tau_1] = r\E_\pi[Z_1] \E_\nu[\tau_1].
	\]
	The lemma follows. 
\end{proof}

\proof[Proof of Theorem \ref{thm:renewal} for the general case] 
    In the proof of Theorem~\ref{thm:renewal} for strongly aperiodic chains, what matters is: (i) the embedded chain $Y_j = X_{\tau_j}$ is a spread out random walk with independent increments (i.i.d.\ except possibly for the first increment), (ii) the increments have finite exponential moments (Lemma~\ref{lem:EeXtau}), (iii) the renewal measure $U_\mu$ can be expressed in terms of the embedded random walk (Lemma~\ref{lem:cyclic}) with a function that has exponential tails (Lemma~\ref{lem:hdecay}), (iv) the integral of the auxiliary function is related to the stationary measure of the chain as in~\eqref{eq:h_integral}.
    
    These ingredients extend to the general case. The embedded random walk is $Y_j: = \tilde X_{\tau_j}$, $j\in \N_0$. By the strong Markov property for the split chain $(\tilde M_n, \tilde Z_n, \gamma_n)$ from Section~\ref{sec:splitMX} and by the regeneration property~\eqref{eq_regeneration_MX}, we have for $\mu \in \{\pi,\nu\}$ and all Borel sets $A_1,A_2\subset \R$
    \begin{align*}
        & \P_\mu \bigl( \tilde X_{\tau_1}\in A_1, \tilde X_{\tau_2} - \tilde X_{\tau_1} \in A_2\bigr) \\ \nonumber
         &\quad = \E_\mu \Bigl[ \E_\mu \bigl [ \1_{A_1}(\tilde X_{\tau_1}) \, \mathds{1}_{A_2}(\tilde Z_{\tau_1+1}+ \cdots + \tilde Z_{\tau_2}) \mid \F_{\tau_1-1} \bigr] \Bigr] \\ \nonumber
         &\quad  = \E_\mu \Bigl[ \E_\mu \bigl [ \1_{A_1}(\tilde X_{\tau_1} - \tilde X_{\tau_1-1} + \tilde X_{\tau_1-1}) \, \P_{\delta_{\tilde M_{\tau_1}}}( \tilde X_{\tau_1} \in A_2) \mid \F_{\tau_1-1} \bigr] \Bigr] \\ \nonumber
         &\quad = \P_\mu(\tilde X_{\tau_1}\in A_1)\, \P_\nu(\tilde X_{\tau_1} \in A_2). 
    \end{align*}
    Hence, the first two increments are independent. A similar argument yields the independence of all increments, moreover they are i.i.d.\ except for the first one when $\mu =\pi$. The embedded random walk is spread out because, by Assumption~\ref{Massu:spread_out}, there exists $n_0$ so that the law of $(M_{n_0},X_{n_0 })$ under $\Prob_z$ has absolutely continuous components with respect to $\pi \otimes l$, no matter the starting point $Z_0=z$. This then also applies to $(M_{rn_0}, X_{rn_0})$. By Eq.~\eqref{eq_regeneration_MX}, the embedded walk $Y_j$ has absolutely continuous increments $\tilde Z_{\tau_j} - \tilde Z_{\tau_{j-1}}$ and in particular, it is spread out.
    
    The proof of Lemma~\ref{lem:EeXtau} extends to the exponential moments of $\tilde X_{\tau_1}$ in an obvious way, whereas the analogues of Lemmas \ref{lem:cyclic}, \ref{lem:hdecay} and Eq.~\eqref{eq:h_integral} are Lemmas \ref{lem:H_cyclic}, \ref{lem:Hdecay} and Lemma~\ref{lem:H_integral}.
    The analysis in the proof of Theorem \ref{thm:renewal} for strongly aperiodic driving chains can thus be carried out for the general case.
\qed

\subsection{Proof of Theorem~\ref{thm:NewMain}} \label{sec:NewMainProof}

We use the following theorem, which connects the second factorial moment measure $M_{\Phi^{(2)}}$ of a point process with the Palm measure.

\begin{thm}[{\cite[Theorem 6.3.24]{baccelli}}]
    \label{thm:BBK}
    Let $\Phi$ be a stationary point process on $\R$ with intensity $\kappa>0$ and Palm version $\Phi^0$ and let
    \begin{equation}
        \eta(B) \coloneqq \E [\Phi^!(B)] \quad \text{with} \quad \Phi^!\coloneqq\Phi^0 - \delta_0, B \in \B(\R)
    \end{equation}
    be the \emph{reduced second moment measure} of $\Phi$.
    Then the second factorial moment measure is 
    \begin{equation}
        \E\Bigl[ \Phi^{(2)}(A\times B) \Bigr] = \kappa \int_A \eta(B-x) \dd x, \quad A,B\in\B(\R).
    \end{equation}
\end{thm}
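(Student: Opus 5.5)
The plan is to derive the formula directly from the refined Campbell (Campbell--Mecke) theorem, which is the defining property of the Palm distribution of a stationary point process. For a stationary $\Phi$ on $\R$ with intensity $\kappa\in(0,\infty)$ and Palm version $\Phi^0$, it states that for every measurable $f\colon \R\times\mathbf N\to[0,\infty]$,
\[
\E\Bigl[\int_\R f(x,\theta_x\Phi)\,\Phi(\dd x)\Bigr]=\kappa\int_\R \E\bigl[f(x,\Phi^0)\bigr]\,\dd x ,
\]
where $\theta_x\Phi:=\Phi(\cdot+x)$ is the configuration shifted so that the point $x$ sits at the origin. In the setting of Theorem~\ref{thm:NewMain} this formula is available regardless of the sign of the spacings: for positive spacings it is standard Palm theory, and for reversible sequences with possibly negative spacings it is exactly the correspondence of Berbee used in the existence lemma of Section~\ref{sec:results}.

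The key steps are as follows.

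\textbf{Step 1: rewrite the factorial moment measure.} Write the second factorial moment measure as a sum over points,
\[
\Phi^{(2)}(A\times B)=\int_\R \1_A(x)\,\bigl(\Phi-\delta_x\bigr)(B)\,\Phi(\dd x).
\]
Shifting by $x$ gives the identity $(\Phi-\delta_x)(B)=(\theta_x\Phi-\delta_0)(B-x)$. Hence the integrand has the form $f(x,\theta_x\Phi)$ with
\[
f(x,\varphi):=\1_A(x)\,(\varphi-\delta_0)(B-x).
\]
For simple $\varphi$ with a point at $0$ (which is the case $\Phi^0$-almost surely), this $f$ is nonnegative. In general one uses $\varphi(B-x)-\1_{B-x}(0)$ restricted to configurations containing $0$, and this is harmless because $\theta_x\Phi$ has a point at $0$ whenever $x$ is an atom of $\Phi$.

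\textbf{Step 2: check measurability and apply the formula.} Joint measurability of $(x,\varphi)\mapsto \varphi(B-x)$ follows by a monotone-class argument, starting from intervals $B$ and using the $\sigma$-algebra on $\mathbf N$. Applying refined Campbell then gives
\[
\E\bigl[\Phi^{(2)}(A\times B)\bigr]=\kappa\int_A \E\bigl[\Phi^!(B-x)\bigr]\,\dd x=\kappa\int_A\eta(B-x)\,\dd x .
\]
Tonelli's theorem justifies exchanging expectation and integral, since everything is nonnegative. Note that the values may be infinite, and the identity still holds in $[0,\infty]$.

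The main obstacle is purely a matter of bookkeeping: making sure that, in the case of signed spacings, the Palm version $\Phi^0=\sum_n\delta_{X_n}$ genuinely satisfies the Campbell--Mecke formula with $\kappa=1/\E[Z_1]$. If only the inversion formula is available there, I would first derive Campbell--Mecke from it by the standard argument: take $\kappa\,\E\int_0^{1}\ldots$ over a fundamental interval and use stationarity of $(Z_n)$. Once that is in place, the rest is the two-line computation above.
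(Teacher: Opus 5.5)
Your proof is correct and is the standard argument. The paper gives no proof of its own and only cites \cite[Theorem 6.3.24]{baccelli}, where the identity is obtained exactly as you do, by applying the refined Campbell (Campbell--Little--Mecke) formula to $f(x,\varphi)=\1_A(x)\,(\varphi-\delta_0)(B-x)$.

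The signed-spacings ``obstacle'' you raise is not part of this statement: the theorem presupposes that $\Phi^0$ is the Palm version, so refined Campbell holds by Mecke's theorem. In the paper's application, that identification comes from the existence lemma via Berbee, which needs only stationary ergodic increments and not reversibility; reversibility is used only afterwards, to control the terms with $n\le -1$.
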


The immediate consequence for our stationary point process is that, when $A$ and $B+t$ are two disjoint sets, 
\begin{equation}\label{eq:Apply_Renewal_measure}
    \E\bigl[ \Phi(A) \Phi(B+t)\bigr] = \frac{1}{\E[Z_1]} \int_A \E_\pi\Biggl[ \sum_{n\in \Z\setminus \{0\}} \1_{\{X_{n } \in B +t - x\}} \Biggr] \dd x. 
\end{equation}
We are interested in the case where $A$ and $B$ are bounded intervals. By translation invariance and symmetry, it is enough to consider $\inf A= 0$.  Let us also assume that the intervals are closed and write $A = [0,a_2]$, $B= [b_1,b_2]$.

The sum over $n\in \Z\setminus \{0\}$ splits into two contributions. For $t$ sufficiently large such that $0 \notin B+t-a_2$, the first is 
\begin{equation} 
    	\E_\pi\Biggl[ \sum_{n=1}^\infty \1_{\{X_{n } \in B + t-x\}} \Biggr] = U_\pi\bigl(\R^k\times (B +t -x)\bigr).
\end{equation} 
By Theorem~\ref{thm:renewal}, as $t\to \infty$, for $x\in [0,a_2]$, this becomes 
\[
	 U_\pi\bigl(\R^k\times [b_1 + t- x, b_2 +t -x]\bigr)  = \frac{b_2-b_1}{\E[Z_1]}+ \Or(\e^{-\eps (t-x)}). 
\] 
Turning to the contribution from $n\leq - 1$, if the spacings $Z_n$ are positive, then $X_n <0$ almost surely, hence, for sufficiently large $t$, the indicator that $X_n$ is in $B+ t - x \subset \R_+$ simply vanishes. If the spacings may take negative values, we note that the reversibility yields 
\[
	\E_\pi \Bigl[ \sum_{n=1}^\infty \1_{\{X_{-n } \in B+t - x\}}\Bigr] 
	 = \E_{\pi} \Bigl[ \sum_{n=1}^\infty \1_{\{X_{n }\in - B - t + x \}} \Bigr]  
	 \leq U_\pi\bigl( \R^k\times (-\infty, - t + a_2]\bigr) = \Or(\e^{-\eps t}). 
\] 
Combining all of the above, we obtain 
\[
	\E\bigl[ \Phi(A)\Phi(B)\bigr] = \frac{|A|}{\E[Z_1]}\, \frac{|B|}{\E[Z_1]} + \Or(\e^{-\eps t}).
\] 
and Theorem~\ref{thm:NewMain} is proven.

\section{Application to interacting chain of particles} \label{sec:ProofStatMech}

Here we prove Theorem \ref{thm:gibbs-markov} on Gibbs point processes. In Section~\ref{sec:transfer} we introduce the transfer operator and a Markov chain of blocks of consecutive spacings, and we check that it is geometrically ergodic. The link between a Markov chain and a Gibbs point process is established in Section~\ref{sec:canonical}. Exponential decay of correlations is deduced in Section~\ref{sec:gibbs-corr}.

\subsection{Transfer operator and Markov chain}\label{sec:transfer}
One-dimensional systems of continuum interacting particles are best treated in the constant pressure ensemble, see Ruelle \cite[Chapter 5.6]{Ruelle-book}. The associated partition function, given $\beta,p>0$, is 
\begin{equation} \label{eq:qnp}
    Q_N(\beta,p) :=  \frac{1}{(N-1)!} \int_0^\infty \Bigl\{ \int_{[0,L]^{N-1}} \exp\Bigl( - \beta p L - \beta \sum_{0 \leq i < j \leq N} v(|x_i - x_j|)\Bigr) \dd \vect x\Bigr\} \dd L
\end{equation}
where, in the integral, $x_0 = 0$ and $x_{N} = L$. It is the normalisation constant for a probability measure for a fixed number of particles, two of them pinned at  $x=0$ and $x=L$. The system's length $L$ is random, its distribution is controlled by the so-called \emph{pressure} $p>0$. 

In view of the energy's symmetry we may drop the factorial, integrate over $0< x_1<\cdots < x_{N-1} < L$, and change variables from $x_i$ to $z_i = x_i - x_{i-1}$, $i=1,\ldots, N$ so that $L = z_1+\cdots+ z_N$. If $v$ has a hard core $r_{\mathrm{hc}}>0$, finite range $R \in (r_\mathrm{hc},\infty)$, and $m\in \N$ satisfies $R < (m+1)r_\mathrm{hc}$, the integral becomes  
\[
    Q_N(\beta,p) = \int_{(r_\mathrm{hc},\infty)^N} \exp \Biggl( - \beta p \sum_{i=1}^N z_i - \beta \sum_{\substack{1\leq i <j \leq N+1\colon\\ |j-i|\leq m}} v(z_i+\cdots + z_{j-1})\Biggr) \dd \vect z.
\]
Let $k:=m-1$. Define 
\begin{align*}
    V_p(z_1,\cdots,z_k) &= \sum_{1\leq i<j\leq k+1 } v(z_i + \cdots +z_{j-1}) + p \sum_{i=1}^k z_i \\
    W(z_1,\cdots,z_k;z'_1,\ldots, z'_{k})& = \sum_{i= 1}^k \sum_{j=1}^k v(z_i+\cdots+z_k+z'_1+\cdots+z'_j)
\end{align*}
and 
\begin{equation} \label{eq:K_general}
    K_{\beta, p}(\vect z;\vect z') =  (\vect z) \exp \Bigl( - \beta \Bigl( \frac 12  V_p(\vect z) +W(\vect z;\vect z') + \frac 12 V_p(\vect z') \Bigr)\Bigr). 
\end{equation}
Then, the partition function along integer multiples of $k$
becomes 
\[
    Q_{kN}(\beta,p) 
    = \int_{((r_\mathrm{hc},\infty)^k)^{N}} \e^{- \beta  V_p(\vect z^{(1)}) /2 } \Biggl(\prod_{i=1}^{N-1} K_{\beta,p}(\vect z^{(i)}, \vect z^{(i+1)})\Biggr) \e^{- \beta  V_p(\vect z^{(N)}) /2 } \dd \vect z^{(1)}\cdots \dd \vect z^{(N)}
\]
the integral is over $((r_\mathrm{hc},\infty)^k)^{N}$. This rewrite motivates a closer look at the kernel $K_{\beta,p}$ and the integral operator
\[
    \mathcal K_{\beta,p} f(\vect z): = \int_{(r_\mathrm{hc},\infty)^k} K_{\beta,p}(\vect z,\vect z') f(\vect z') \dd \vect z'. 
\]
Let $\mathsf{s}\colon\R^k\to \R^k$ be the map that reverses the order of the spacings, $\mathsf{s}(z_1,\cdots,z_k)=(z_k,\cdots,z_1)$. Write $L^2((r_\mathrm{hc},\infty)^2)$ for the $L^2$ space with respect to Lebesgue measure.

\begin{lem} \label{lem:K_properties}
    The transfer operator with the kernel \eqref{eq:K_general} enjoys the following properties:
    \begin{enumerate}[label = (\roman*), ref = (\roman*)]
        \item \label{K_inversion} inversion symmetry: $K_{\beta,p}(x,y) = K_{\beta,p}(\mathsf{s}(y),\mathsf{s}(x))$,
        \item \label{K_strict_positive} strictly positive kernel: $K_{\beta,p}(x,y)>0$ for all $(x,y) \in (r_{\text{hc}},\infty)^{2k}$,
        \item \label{K_compact} $\mathcal{K}_{\beta,p}$ is a compact operator in $L^2((r_\mathrm{hc},\infty)^k)$. 
    \end{enumerate}
\end{lem}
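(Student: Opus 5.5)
We verify the three properties in turn, directly from the definitions of $V_p$ and $W$ and the hard-core and finite-range assumptions on $v$. We read the stray factor in \eqref{eq:K_general} as the indicator $\1_{(r_\mathrm{hc},\infty)^k}(\vect z)$ (and likewise in $\vect z'$).

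For \ref{K_inversion}, we compute $V_p(\mathsf s(\vect z))$. It sums $v$ over all sums of consecutive blocks of $(z_k,\ldots,z_1)$. These are exactly the consecutive-block sums of $(z_1,\ldots,z_k)$, and $p\sum z_i$ is symmetric, so $V_p\circ\mathsf s = V_p$. For $W$, the quantity $W(\vect z;\vect z')$ is the sum of $v$ over all consecutive blocks of the concatenation $(z_1,\ldots,z_k,z'_1,\ldots,z'_k)$ that straddle the cut between $z_k$ and $z'_1$. Reversing the concatenation gives $(z'_k,\ldots,z'_1,z_k,\ldots,z_1)=(\mathsf s(\vect z'),\mathsf s(\vect z))$, and the straddling blocks correspond bijectively with the same sums. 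Hence $W(\mathsf s(\vect z');\mathsf s(\vect z)) = W(\vect z;\vect z')$. Plugging both identities into \eqref{eq:K_general} gives $K_{\beta,p}(\mathsf s(\vect z'),\mathsf s(\vect z)) = K_{\beta,p}(\vect z,\vect z')$. The only care needed is the index bookkeeping in the double sum defining $W$.

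For \ref{K_strict_positive}, take all coordinates in $(r_\mathrm{hc},\infty)$. Every argument of $v$ appearing in $V_p$ and $W$ is then a sum of at least one spacing, hence $>r_\mathrm{hc}$. By assumption $v$ is finite on $[r_\mathrm{hc},R]$ and vanishes beyond $R$, so $v$ is finite there. The exponent is therefore finite and the exponential is strictly positive.

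For \ref{K_compact}, the main task is to show that $K_{\beta,p}\in L^2((r_\mathrm{hc},\infty)^{2k})$, so that $\mathcal K_{\beta,p}$ is Hilbert--Schmidt and hence compact. Let $v_{\min}>-\infty$ be a lower bound for $v$. Each of $V_p$ and $W$ contains at most $k^2$ potential terms, so
\[
K_{\beta,p}(\vect z,\vect z') \le C \exp\Bigl(-\tfrac{\beta p}{2}\sum_{i=1}^k (z_i+z'_i)\Bigr),
\qquad C=\e^{2\beta k^2 |v_{\min}|}.
\]
The square of the right-hand side is integrable over $(r_\mathrm{hc},\infty)^{2k}$ because $p>0$. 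This bound is the key estimate. It uses only $p>0$ and the lower bound on $v$, and we expect no real obstacle beyond checking that the number of interaction terms is finite.
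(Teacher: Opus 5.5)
Your proof is correct and follows essentially the same route as the paper. You prove symmetry by direct computation, positivity from finiteness of $v$ outside the hard core, and compactness from the bound $K_{\beta,p}(\vect z,\vect z')\le C\exp\bigl(-\tfrac{\beta p}{2}\sum_i(z_i+z'_i)\bigr)$, which makes the kernel Hilbert--Schmidt. Your block-reversal bijection and your count of at most $2k^2$ interaction terms simply fill in the paper's ``straightforward calculation'' and its unspecified constant $C_\beta$.
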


\begin{proof}
The inversion symmetry follows from a straightforward calculation.
The positivity of the kernel is guaranteed by the fact that the potential is finite outside the hard core. As $v$ is bounded from below, the kernel $K_{\beta,p}$ is bounded from above by 
\[
    K_{\beta,p}(\vect z, \vect z') \leq C_\beta\exp\Bigl( - \frac 12 \beta p \Bigl(\sum_{i=1}^k z_i + \sum_{i=1}^k z'_i\Bigr)\Bigr) 
\]
for some constant $C_\beta>0$. It follows that 
\[
    \int_{((r_\mathrm{hc},\infty)^k)^2} |K_{\beta,p}(\vect z,\vect z')|^2 \dd \vect z\, \dd \vect z' < \infty
\]
hence $\mathcal K_{\beta,p}$ is Hilbert-Schmidt and (iii) follows. 
\end{proof} 

By the Krein-Rutman theorem \cite[Theorem 19.3]{Deimling}, the dominant eigenvalue $\lambda_{0}(\beta,p)$ of the transfer operator $\mathcal K_{\beta,p}$ is strictly positive and simple,  and the corresponding eigenfunction $\varphi_{0;\beta,p}$ can be chosen to be strictly positive  on $(r_\mathrm{hc},\infty)^k$ and normalised. Using these, we define a Markov transition kernel on $(r_\mathrm{hc},\infty)^k$ by 
\begin{equation}
    \label{eq:TransitionKernel}
    P_{\beta,p}(\vect z,A) = \frac{1}{\lambda_0(\beta,p) \varphi_{0;\beta,p}(\vect z)} \int_A K_{\beta,p}(\vect z,\vect z') \varphi_{0;\beta,p}(\vect z') \dd \vect z'.
\end{equation}
This Markov kernel has invariant measure $\pi$ with density proportional to $\varphi_{0;\beta,p}(\mathsf{s}(\vect z))\varphi_{0;\beta,p}(\vect z)$.

Let $(\tilde M_n)_{n\in \N}$ be a Markov chain with state space $(r_{\mathrm{hc}}, \infty)^k$ and transition kernel $P_{\beta,p}$. Write $\tilde M_n = (Z_{kn-k+1},\ldots, Z_{kn})$ and define $X_0:=0$, $X_n:= Z_1+\cdots + Z_n$. We check that the analogues of Assumptions~\ref{MAssu:RegularityMC}--\ref{MAssu:Inversion} hold true. 

The spacings are positive, thus Assumption~\ref{MAssu:Inversion} holds true. Clearly for every $n\geq 2$ and every initial value of $\tilde M_0$, the law of $(\tilde M_n, X_{nk})$ is absolutely continuous with respect to Lebesgue measure on $(r_\mathrm{hc},\infty)^k\times \R$, hence also with respect to $\pi \times l$ and the Markov random walk is spread out (see Assumption~\ref{Massu:spread_out}). Geometric ergodicity (Assumption~\ref{MAssu:RegularityMC}) is deduced from the Krein-Rutman theorem. 

\begin{lem}
    \label{lem:LJgeoErg}
	The Markov chain with transition kernel defined in \eqref{eq:TransitionKernel} is geometrically ergodic.
\end{lem}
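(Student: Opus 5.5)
The plan is to prove geometric ergodicity through a Doeblin-type drift-and-minorisation argument. The inputs are the spectral gap of the compact, strictly positive operator $\mathcal K_{\beta,p}$ from Lemma~\ref{lem:K_properties} and the explicit exponential decay of the kernel in $\sum_i z_i$. It is convenient to write $P = P_{\beta,p}$, $\lambda_0 = \lambda_0(\beta,p)$, $\varphi = \varphi_{0;\beta,p}$ and $S := (r_\mathrm{hc},\infty)^k$, so that $P$ is the Doob $h$-transform
\[
P f = \lambda_0^{-1}\varphi^{-1}\mathcal K_{\beta,p}(\varphi f).
\]
The first step is irreducibility and aperiodicity. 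Since $K_{\beta,p}>0$ on $S\times S$ (Lemma~\ref{lem:K_properties}\ref{K_strict_positive}) and $\varphi>0$, the measure $P(\vect z,\cdot)$ is equivalent to Lebesgue measure on $S$ for every $\vect z$. Hence the chain is $\pi$-irreducible, aperiodic and Harris recurrent, and $\pi$ is its unique invariant law.

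The second step is a uniform minorisation on compact sets. Let $C_L := (r_\mathrm{hc}, r_\mathrm{hc}+L]^k$. Since $v$ is finite and bounded below on $[r_\mathrm{hc},R]$ and vanishes beyond $R$, the exponent of $K_{\beta,p}(\vect z,\vect z')$ is bounded above and below for $\vect z,\vect z'\in C_L$. Hence $K_{\beta,p}$ is bounded away from $0$ on $C_L\times C_L$. One then needs $\varphi$ bounded above and away from zero on $C_L$. This follows from the eigenvalue equation $\varphi(\vect z)=\lambda_0^{-1}\int K_{\beta,p}(\vect z,\vect z')\varphi(\vect z')\,\dd\vect z'$: the kernel is bounded above uniformly, $\varphi\in L^2$, and the factor $\e^{-\beta p\sum z'_i/2}$ is integrable. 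The lower bound uses positivity of $K_{\beta,p}$ on $C_L\times C_L$ together with $\varphi>0$ on a set of positive measure. Together these give $P(\vect z,\cdot)\ge \epsilon_L\,\mathrm{Leb}(\cdot\cap C_L)$ for all $\vect z\in C_L$, so $C_L$ is a small set.

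The third step, a Foster--Lyapunov drift condition, is the main obstacle, because the state space is unbounded and one has to control the chain where $\sum_i z_i$ is large. I would first try $V(\vect z):=\varphi(\vect z)^{-1}\e^{\theta\sum_i z_i}$ with $0<\theta<\beta p/2$. Then
\[
PV(\vect z)=\frac{1}{\lambda_0\varphi(\vect z)}\int K_{\beta,p}(\vect z,\vect z')\,\e^{\theta\sum z_i'}\,\dd\vect z' .
\]
The $\vect z'$-dependence of $K_{\beta,p}$ contains $\e^{-\beta p\sum z'_i/2}$ and a bounded interaction factor, so $PV(\vect z)\le C\,\e^{-\beta p\sum z_i/2}/\varphi(\vect z)$ up to constants. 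Away from $C_L$, one has $\sum z_i$ large, and this is at most $\rho V(\vect z)$ for a small $\rho<1$, since $\theta<\beta p/2$. The terms $W$ and $V_p$ only involve $v$ at arguments where it is bounded below, and they vanish once the relevant sums exceed $R$, so the bounds are uniform. This gives $PV\le\rho V+b\1_{C_L}$. Theorem~15.0.1 of Meyn and Tweedie \cite{MeynTweedie} then yields $V$-uniform geometric ergodicity, hence the total variation bound of Assumption~\ref{MAssu:RegularityMC} with $C(\vect z)$ proportional to $V(\vect z)<\infty$.

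As a fallback, one can argue spectrally. The $h$-transform is unitarily equivalent to $\lambda_0^{-1}\mathcal K_{\beta,p}$ acting on $L^2(\pi)$. Compactness and Krein--Rutman give a spectral gap: the rest of the spectrum lies in a disc of radius $|\lambda_1|/\lambda_0<1$, because the positive kernel excludes other peripheral eigenvalues. This gives $L^2(\pi)$-geometric ergodicity, which for $\pi$-irreducible aperiodic chains is equivalent to pointwise geometric ergodicity by Roberts--Rosenthal \cite{RobertRosenthal}.
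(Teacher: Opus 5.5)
\paragraph{Gap in step 2.} Your step 2 uses more than the hypotheses give. The theorem only assumes that $v$ is finite and bounded below on $[r_{\mathrm{hc}},R]$. Pointwise finiteness does not make $v$ bounded \emph{above}, so you cannot conclude that the exponent of $K_{\beta,p}$ is bounded on $C_L\times C_L$.

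Suppose, for example, $v(r)\to+\infty$ as $r\downarrow r_{\mathrm{hc}}$. Then $V_p(\vect z)\ge v(z_1)-\mathrm{const}$ gives $K_{\beta,p}(\vect z,\vect z')\le \mathrm{const}\,\e^{-\beta v(z_1)/2}\e^{-\beta p\sum_i z'_i/2}$, and the eigenvalue equation gives $\varphi(\vect z)\to 0$ as $z_1\downarrow r_{\mathrm{hc}}$. Consequences:
\begin{itemize}
\item both lower bounds you claim on $C_L$ (for $K_{\beta,p}$ and for $\varphi$) fail;
\item $V=\varphi^{-1}\e^{\theta\sum_i z_i}$ is unbounded on $C_L$;
\item your estimate $PV\le C\e^{-\beta p\sum_i z_i/2}/\varphi$ no longer produces a finite $b$ in $PV\le \rho V+b\1_{C_L}$.
\end{itemize}
Shrinking $C_L$ away from the hard core does not help. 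A merely finite measurable $v$ can be unbounded at interior points too. Moreover, the complement of a smaller set then contains points with small $\sum_i z_i$, where your ratio bound $PV/V\le C\e^{-(\beta p/2+\theta)\sum_i z_i}$ is not below $1$.

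\paragraph{The repair.} Do not bound $K_{\beta,p}$ and $\varphi$ separately: the factor $\e^{-\beta V_p(\vect z)/2}$ cancels between numerator and normaliser in \eqref{eq:TransitionKernel}. With $g:=\e^{-\beta V_p/2}\varphi$,
\[
P(\vect z,\dd\vect z')=\frac{\e^{-\beta W(\vect z;\vect z')}\,g(\vect z')\,\dd\vect z'}{\int \e^{-\beta W(\vect z;\vect y)}\,g(\vect y)\,\dd\vect y}.
\]
Every argument of $v$ in $W(\vect z;\vect z')$ is at least $z_k+z'_1>z'_1$, so $W=0$ on $\{z'_1\ge R\}$. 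Also $W$ is bounded below, and $g$ is integrable because $\varphi$ is bounded.

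Hence $P(\vect z,\cdot)\ge \epsilon\,\nu(\cdot)$ for \emph{every} $\vect z$, where $\nu$ is the normalised restriction of $g\,\dd\vect z'$ to $\{z'_1\ge R\}$ and $\epsilon>0$. This is a global Doeblin condition. It gives $\sup_{\vect z}\|P^n(\vect z,\cdot)-\pi\|_{\mathsf{TV}}\le 2(1-\epsilon)^n$ directly, which makes your drift step superfluous. If you still want the $V$-weighted bound, the same formula shows that $PV$ is globally bounded.

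\paragraph{Comparison with the paper.} With this repair, your probabilistic route is a valid alternative to the paper's, which avoids small sets altogether. The paper obtains spectral radius $<1$ for $\mathcal T=\lambda_0^{-1}\mathcal K-\Pi$ on $L^2(\mathrm{Leb})$ from compactness and Krein--Rutman. It then passes to pointwise bounds via $\sup_{\vect z}\int T(\vect z,\vect z')^2\,\dd\vect z'<\infty$. This needs only that $v$ is bounded below and finite off the hard core.

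\paragraph{Your fallback.} It needs care for $k\ge 2$. In that case $K_{\beta,p}$ is not symmetric and $\pi$ has density $\varphi\,(\varphi\circ\mathsf{s})$ rather than $\varphi^2$. So $f\mapsto \varphi f$ is not unitary from $L^2(\pi)$ to $L^2(\mathrm{Leb})$, and $P$ is not $\pi$-reversible. The claimed unitary equivalence therefore holds only for $k=1$.
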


\begin{proof}
    To lighten notation we drop the $(\beta,p)$-dependence from the notation. 
     Let $\varphi_0$ be the principal eigenfunction of $\mathcal K$ and $\psi_0:= \varphi_0 \circ \mathsf s$. Multiplicative constants are chosen so that $\varphi_0$ is strictly positive and $\int_{(r_\mathrm{hc},\infty)^k} \varphi_0 \psi_0 \dd \vect z = 1$. The probability measure $\pi$ with probability density function $\psi_0\psi_0$ is invariant for the kernel $P$. Let $\Pi$ be the integral operator given by
     \[
        \Pi f(\vect z) = \Bigl( \int_{(r_\mathrm{hc},\infty)^k} \psi_0(\vect z') f(\vect z') \dd \vect z'\Bigr) \varphi_0(\vect z). 
     \]
     Put differently, $\Pi f = \langle \psi_0,f\rangle \varphi_0$ with $\langle\cdot,\cdot\rangle$  the scalar product on $L^2((r_\mathrm{hc},\infty)^k,\mathrm{Leb})$ (square-integrable functions with respect to Lebesgue measure). 
     The operator $\Pi$ is a rank one projection that satisfies $\mathcal K \Pi = \Pi \mathcal K = \lambda_0 \mathcal K$ and thus 
    \begin{equation}
        \frac{1}{\lambda_0^n} \mathcal{K}^n - \Pi = \left( \frac{1}{\lambda_0}\mathcal{K} - \Pi \right)^n.
    \end{equation}
    for all $n\in \N$.
    We have, by definition of $P$, for all $\vect z\in (r_\mathrm{hc},\infty)^k$ and all Borel sets $A \subset  (r_\mathrm{hc},\infty)^k$,
    \begin{equation*}
        P^n(\vect z,A) - \pi(A) = \frac{1}{\varphi_0(\vect z)} \left[\left( \frac{1}{\lambda_0^n}\mathcal{K}^n - \Pi \right)\varphi_0 \mathds{1}_A \right](\vect z) 
    \end{equation*}
    It is known from the Krein-Rutman theorem that the operator $\mathcal T:= \frac{1}{\lambda_0} \mathcal K - \Pi$ in $L^2((r_\mathrm{hc},\infty)^k,\mathrm{Leb})$ has spectral radius $r$ strictly smaller than $1$
    \begin{equation}      \label{eq:SpectralRadius}
        r= \limsup_{n \rightarrow\infty} \norm{\mathcal{T}^n}^{1/n} <1.
    \end{equation}
    To go from here to geometric ergodicity, we need to pass from the $L^2$-norm to the supremum norms. 
    A straightforward calculation gives
    \begin{equation}
        C  \coloneqq \sup_{\vect z\in(r_\mathrm{hc},\infty)^k} \int K(\vect z,\vect z')^2 \dd \vect z' <\infty.
    \end{equation}
    By the Cauchy-Schwarz inequality applied to $\varphi_0(\vect z) = \lambda_0^{-1} \int K(\vect z,\vect z')  \varphi_{0}(\vect z') \dd \vect z'$, the principal eigenfunction $\varphi_0$ is bounded.   
    The operator $\mathcal T$ has integral kernel $T(\vect z,\vect z') = \frac{1}{\lambda_0} K(\vect z,\vect z') - \psi_0(\vect z') \phi_0(\vect z)$. 
    With the inequality  the inequality $(a-b)^2 \leq 2(a^2+b^2)$ we get 
    \begin{equation}
        C^\prime \coloneqq \sup_{\vect z \in (r_\mathrm{hc},\infty)^k}  \int T(\vect z,\vect z')^2 \dd \vect z' \leq 2C + 2 \norm{\varphi_0}_\infty^2 \norm{\varphi_0}_2^2 < \infty.
    \end{equation}
    It follows that 
    \begin{align*}
        |P^n(\vect z,A) - \pi (A)| & \leq \frac{1}{\varphi_0(\vect z)} || \mathcal T^n (\varphi_0 \1_A) ||_\infty \\
        &\leq \frac{1}{\varphi_0(\vect z)}\, \sqrt{C'} || \mathcal T^{n-1} (\varphi_0 \1_A) ||_{2}\\
        &\leq \frac{1}{\varphi_0(\vect z)}\, \sqrt{C'} ||\mathcal T^{n-1}||\, ||\varphi_0 \1_A||_{2}\\
        &\leq \frac{1}{\varphi_0(\vect z)}\, \sqrt{C'} ||\varphi_0||_{2}\,  ||\mathcal T^{n-1}||.
    \end{align*}    
    By~\eqref{eq:SpectralRadius}, there exists $\delta\in (0,1)$ and a constant $c>0$ so that $||\mathcal T^{n-1}|| \leq c (1-\delta)^n$. 
    It follows that $\sup_A |P^n(\vect z,A) -\pi(A)|$ is bounded by a $\vect z$-dependent constant times $(1-\delta)^n$, i.e., the chain is geometrically ergodic. 
\end{proof}

Finally, we check the condition on exponential moments from Assumption~\ref{MAssu:ExpMoment}. 
As the spacings are positive, it is enough to consider $\theta> 0$.

\begin{lem}
    \label{lem:ExpMomIncrLJ}
    For all $\theta \in (0,\beta^{-1}p/2)$, 
    \[
        \limsup_{n\to \infty} \frac 1 n \log \E_\pi\bigl[ \e^{\theta(Z_1+\cdots + Z_n)}] <\infty. 
    \]
\end{lem}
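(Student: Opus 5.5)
Since the spacings are positive, I only need to bound $\E_\pi[\e^{\theta X_n}]$ for $\theta>0$. The plan is to write this expectation through the transfer operator and absorb the factor $\e^{\theta X_n}$ into the pressure term of the kernel. Take $n=kN$ (for general $n$, bound $X_n\le X_{k\lceil n/k\rceil}$). By the definition~\eqref{eq:TransitionKernel} of $P=P_{\beta,p}$ and the invariant density $\psi_0\varphi_0$, the path integral telescopes:
\[
\E_\pi\bigl[\e^{\theta X_{kN}}\bigr] = \frac{1}{\lambda_0^{N-1}}\int \psi_0(\vect z^{(1)})\,\e^{\theta |\vect z^{(1)}|}\prod_{i=1}^{N-1}\Bigl(K_{\beta,p}(\vect z^{(i)},\vect z^{(i+1)})\,\e^{\theta|\vect z^{(i+1)}|}\Bigr)\varphi_0(\vect z^{(N)})\,\dd \vect z^{(1)}\cdots \dd\vect z^{(N)},
\]
where $|\vect z|=z_1+\cdots+z_k$. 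The $\varphi_0$ factors cancel in the middle and only the two boundary factors survive.

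Next I split the extra weight symmetrically, $\e^{\theta|\vect z^{(i+1)}|}=\e^{\theta|\vect z^{(i+1)}|/2}\e^{\theta|\vect z^{(i+1)}|/2}$. Since $V_p$ contains $p|\vect z|$, this gives
\[
K_{\beta,p}(\vect z,\vect z')\,\e^{\theta(|\vect z|+|\vect z'|)/2}=K_{\beta,p'}(\vect z,\vect z'),\qquad p'=p-\theta/\beta .
\]
The hypothesis $\theta<\beta^{-1}p/2$, which is weaker than what is needed here, guarantees $p'\ge p/2>0$. Hence the bound in the proof of Lemma~\ref{lem:K_properties} applies to $K_{\beta,p'}$, so $\mathcal K_{\beta,p'}$ is Hilbert–Schmidt on $L^2$ with some finite operator norm $\|\mathcal K_{\beta,p'}\|$. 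Rewriting the chain as $\langle f,\mathcal K_{\beta,p'}^{N-1} g\rangle$ with boundary functions
\[
f=\psi_0\,\e^{\theta|\cdot|/2},\qquad g=\varphi_0\,\e^{\theta|\cdot|/2},
\]
Cauchy–Schwarz yields
\[
\E_\pi\bigl[\e^{\theta X_{kN}}\bigr]\le \lambda_0^{-(N-1)}\,\|f\|_2\,\|g\|_2\,\|\mathcal K_{\beta,p'}\|^{N-1},
\]
and so
\[
\limsup_{N\to\infty}\frac{1}{kN}\log\E_\pi\bigl[\e^{\theta X_{kN}}\bigr]\le \frac1k\log\frac{\|\mathcal K_{\beta,p'}\|}{\lambda_0}<\infty .
\]

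The main obstacle is checking that the boundary functions $f$ and $g$ lie in $L^2$, i.e.\ that $\varphi_0$ decays fast enough. I handle this with the eigenvalue equation. Since $\varphi_0=\lambda_0^{-1}\mathcal K_{\beta,p}\varphi_0$ and
\[
K_{\beta,p}(\vect z,\vect z')\le C_\beta\,\e^{-\beta p(|\vect z|+|\vect z'|)/2},
\]
Cauchy–Schwarz gives
\[
\varphi_0(\vect z)\le C\,\e^{-\beta p|\vect z|/2}\,\|\varphi_0\|_2 .
\]
Then $\e^{\theta|\vect z|/2}\varphi_0(\vect z)\le C\,\e^{-(\beta p-\theta)|\vect z|/2}$, which is square-integrable on $(r_\mathrm{hc},\infty)^k$ because $\theta<\beta p$. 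The same bound holds for $\psi_0=\varphi_0\circ\mathsf s$, since $|\mathsf s(\vect z)|=|\vect z|$. There is a units issue: the reduced pressure $p'=p-\theta/\beta$ versus the stated range $\theta<\beta^{-1}p/2$ depends on how $\theta$ enters relative to $\beta p$. I would check this carefully, but any threshold of the form $\theta<c\,p$ (with the $\beta$ scaling consistent) suffices for the argument. Finally, for $n$ not a multiple of $k$ I use monotonicity $X_n\le X_{k\lceil n/k\rceil}$, valid because the spacings are positive.
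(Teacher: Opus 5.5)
Your proof is correct. It shares the paper's core idea: rewrite $\E_\pi[\e^{\theta X_{kN}}]$ through the transfer operator and absorb the exponential weight into a shifted pressure. You close the argument differently, though.

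The paper over-absorbs. It bounds the integrand by $\lambda_0(p)^{-(n-1)}\psi_{0,p}(\vect z^{(1)})\prod_i K_{p-2\theta}(\vect z^{(i)},\vect z^{(i+1)})\,\varphi_{0,p}(\vect z^{(n)})$. The boundary functions therefore stay the unperturbed eigenfunctions, which are in $L^2$ for free. It then invokes Krein--Rutman asymptotics for $\mathcal K_{p-2\theta}^{n-1}$ to get the rate $\frac1k\log\frac{\lambda_0(p-2\theta)}{\lambda_0(p)}$. The price is the condition $p-2\theta>0$.

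You instead keep the exact identity with $K_{\beta,p-\theta/\beta}$ and put the half-weights $\e^{\theta|\cdot|/2}$ on the boundary. This is the precise form of the paper's intermediate display, whose boundary factors should carry $\theta/2$. It forces you to prove $\varphi_0(\vect z)\le C\e^{-\beta p|\vect z|/2}$, and your eigenvalue-equation plus Cauchy--Schwarz argument does this correctly. In exchange you need no spectral information on the perturbed operator, only finiteness of its Hilbert--Schmidt norm, and you obtain every $\theta\in(0,\beta p)$.

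That range is sharp. On $[R,\infty)^k$ one has $W(\vect z;\cdot)=0$ and $V_p(\vect z)=p|\vect z|$, hence $\varphi_0(\vect z)=c\,\e^{-\beta p|\vect z|/2}$. The stationary density is then proportional to $\e^{-\beta p|\vect z|}$ there, so $\E_\pi[\e^{\theta X_k}]=\infty$ once $\theta\ge\beta p$. Your constant $\norm{\mathcal K_{\beta,p'}}$ is cruder than a spectral radius (the kernel is not symmetric for $k\ge2$), but only finiteness matters here.

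On the units issue you flagged: your sentence that $\theta<\beta^{-1}p/2$ guarantees $p'\ge p/2$ is true only for $\beta\ge1$. The hypothesis should be read as $\theta<\beta p/2$; the paper's proof sets $\beta=1$, where the two readings agree. For $\beta<1/\sqrt2$ the literal range contains values $\theta\ge\beta p$, at which the $\limsup$ is $+\infty$ by the tail computation above. So the clean conclusion is that your argument proves the bound for all $\theta\in(0,\beta p)$, which contains the intended range $(0,\beta p/2)$.
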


\begin{proof}
    Let us assume for simplicity that $\beta =1$. 
    We bring back the $p$-dependence in the notation for the transfer operator, the principal eigenvalue, and the left and right eigenvectors $\psi_0$ and $\varphi_0$. 
    We choose multiplicative constants in such a way that $\langle \varphi_0,\psi_0\rangle =1$ so that the stationary measure $\pi$ has probability density function $\varphi_0\psi_0$.  
    First, we rewrite the expected value we are after with the transfer operator at the perturbed pressure along integer multiples of $k$: 
    \begin{multline*}
        \E_\pi[\e^{\theta (Z_1+\cdots + Z_{nk})}] \\
        = \int \exp\Bigl( \theta \sum_{i=1}^n \sum_{j=1}^k z_j^{(i)}\Bigr) \Biggl( \prod_{i=1}^{n-1} \frac{1}{\lambda_0(p)\varphi_{0,p}(\vect z^{(i)})} K_p(\vect z^{(i)},\vect z^{(i+1)}) \varphi_{0,p}(\vect z^{(i+1)})\Biggr)   \\
        \times \psi_{0,p}(\vect z^{(1)}) \varphi_{0,p}(\vect z^{(1)}) \dd \vect z^{(1)}\cdots \dd \vect z^{(n)}.
    \end{multline*}
    The integral is over $((r_{\mathrm{hc}},\infty)^k)^{n}$. The integrand is equal to 
    \[
        \frac{1}{\lambda_0(p)^{n-1}}\, \psi_{0,p}(\vect z^ {(1)}) \exp\bigl( \theta \sum_{i=1}^k z_i^{(1)}\bigr) \Biggl( \prod_{i=1}^{n-1} K_{p-\theta}(\vect z^{(i)},\vect z^{(i+1)})\Biggr) 
        \exp\bigl( \theta \sum_{i=1}^k z_i^{(n)}\bigr) \varphi_{0,p}(\vect z^{(n)}). 
    \]
    and bounded by 
    \[
        \frac{1}{\lambda_0(p)^{n-1}}\, \psi_{0,p}(\vect z^ {(1)})  \Biggl( \prod_{i=1}^{n-1} K_{p-2\theta}(\vect z^{(i)},\vect z^{(i+1)})\Biggr) 
         \varphi_{0,p}(\vect z^{(n)}). 
    \]
    Thus, 
    \[
        \E_\pi[\e^{\theta (Z_1+\cdots + Z_{nk})}]
        \leq \frac{1}{\lambda_0(p)^{n-1}} \langle \psi_{0,p}, \mathcal K_{p-2\theta}^{n-1}\varphi_{0,p}\rangle.
    \]
   Invoking the Krein-Rutman theorem as in the proof of Lemma~\ref{lem:LJgeoErg}, we see that as $n\to \infty$, the scalar product on the right side behaves like 
   \[
       (1+o(1)) \lambda_0(p-2\theta)^{n-1} \langle \varphi_{0,p},\varphi_{0,p-2\theta}\rangle \langle\varphi_{0,p-2\theta},\varphi_{0,p}\rangle
   \]
    It follows that 
    \[
        \limsup_{n\to \infty} \frac{1}{nk} \log \E_\pi\bigl[ \e^{\theta(Z_1+\cdots+ Z_{nk)}}\bigr] \leq \frac 1 k \log \frac{\lambda_0(p-2\theta)}{\lambda_0(p)} < \infty. 
    \]
    For integers $n$ that are not necessarily multiples of $k$, we may bound  $\theta(Z_1+\cdots + Z_{n})$ by $\theta(Z_1+\cdots + Z_{k \ceil{n/k}})$. 
    The lemma follows. 
\end{proof}

\subsection{From Markov chain to Gibbs point process. Proof of Theorem~\ref{thm:gibbs-markov}(a)} \label{sec:canonical}
Next we show that, for every $\beta,\zeta >0$, there is a pressure $p= p(\beta,\zeta)>0$ so that sequence of spacings associated with the Gibbs point process $\mathsf P_{\beta,\zeta}$ comes from a Markov chain with kernel $P_{\beta,p}$; this is the first part of Theorem~\ref{thm:gibbs-markov}. The proof comes in several steps: 
\begin{enumerate}
    \item Given $p>0$ and starting from the Markov chain, show that the law of the sequence $(Z_n)_{n\in \Z}$ for the Markov chain is a Gibbs measure on $(r_\mathrm{hc},\infty)$, i.e., it satisfies the DLR equations for a suitable Hamiltonian (Lemma~\ref{lem:spacinggibbs}). 
    \item Show that the DLR equations for the stationary sequence $(Z_n)_{n\in \Z}$ imply that the the law of associated point process $\Phi$  is a \emph{canonical Gibbs measure} \cite{georgii1976canonical,georgii-canonical-book} 
    (Lemma~\ref{lem:canonical2}). 
    \item Deduce from Georgii's results \cite{georgii1976canonical,georgii-canonical-book} that $\Phi$ is a Gibbs point process for suitable activity $\zeta = \zeta(\beta,p)$. 
    \item Invert the pressure-activity relation: Given $\zeta$, find a $p$ with $\zeta = \zeta(\beta,p)$ and conclude with the uniqueness of Gibbs measures. 
\end{enumerate}
To lighten notation we suppress the $\beta$-dependence from the notation and set $\beta =1$ (equivalently, replace $v$ with $\beta v$). 
For $\Gamma = \{1,\ldots, n\}$ with $n\in \N$ and $\vect z = (z_j)_{j\in \Z}$ define the energy
 \[
	\mathscr H_\Gamma(\vect z) :=  \sum_{ \substack{i,j\in \Z\colon\, i\leq j\\ \{i,\ldots, j\}\cap \Gamma\neq\varnothing}} v(z_i+\cdots + z_{j}). 
\] 
Further define a partition function with boundary conditions $\vect z$ at pressure $p>0$
\[
	Q_{\Gamma;p}^{\vect z} := \int_{\R_+^n} \exp\Bigl( - \mathscr H_{\Gamma}(\vect{z}'_{\Gamma}\vect{z}_{\Z\setminus \Gamma})- p \sum_{i=1}^n z'_i \Bigr)  \dd z'_1\cdots \dd z'_n
\] 
Finally, for measurable $A\subset \R_+^\Gamma$, let 
\[
	\pi_{\Gamma;p}(A\mid \vect z):= \frac{1}{Q_{\Gamma;p}^{\vect z}} 
	\int_{\R_+^n} \1_A({\vect z}'_\Gamma ) \exp\Bigl( - \mathscr H_{\Gamma}(\vect{z}'_{\Gamma}\vect{z}_{\Z\setminus \Gamma})- p \sum_{i=1}^n z'_i \Bigr)  \dd z'_1\cdots \dd z'_n.
\] 
The definitions are easily extended to non-empty finite subsets $\Gamma\subset \Z$. 
In the following 
\[
    \tilde M_n = (Z_{kn-k+1},\ldots, Z_{k n}) \quad (n\in \Z),
\]
see Figure \ref{fig:non-overlapping_M}.

\begin{lem} \label{lem:spacinggibbs}
	Suppose that $(\tilde M_n)_{n\in \Z}$ is a stationary Markov chain with transition kernel $P=P_p= P_{\beta,p}$. 
    Then $(Z_n)_{n\in \Z}$ satisfies the DLR equations at $\beta,p$, i.e., 
	\[
		\P\Bigl( (Z_j)_{j\in \Gamma} \in A\, \Big|\, (Z_j)_{j\in \Z\setminus \Gamma}\Bigr) 
		 = \pi_{\Gamma;p}\bigl( A\mid (Z_j)_{j\in \Z}\bigr)
	\] 
	almost surely, for all finite non-empty subsets $\Gamma \subset \Z$ and all measurable $A\subset \R_+^\Gamma$. 
\end{lem}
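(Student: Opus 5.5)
The plan is to compute finite-dimensional distributions of the stationary chain $(\tilde M_n)$ explicitly, and to recognise the conditional law of a finite block of spacings as the Gibbs specification. Conditional probabilities are determined by finite windows. It therefore suffices to prove the following: for every finite $\Gamma$ and all $N$ large enough that $\Gamma\subset\{-kN+1,\ldots,kN\}$, the conditional law of $(Z_j)_{j\in\Gamma}$, given $(Z_j)_{j\in\{-kN+1,\ldots,kN\}\setminus\Gamma}$, equals $\pi_{\Gamma;p}(\cdot\mid \vect z)$. Once this holds, it extends to conditioning on all of $\Z\setminus\Gamma$ by a martingale convergence argument, or directly because the right side does not depend on $N$. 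Enlarging $\Gamma$ does not help, since DLR for $\Gamma$ is not implied by DLR for a superset. Instead I keep $\Gamma$ arbitrary inside a window made of whole blocks $\tilde M_{-N+1},\ldots,\tilde M_N$.

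First, I write the joint density of $(\tilde M_{-N+1},\ldots,\tilde M_N)$ under stationarity. The density of $\tilde M_{-N+1}$ is $\psi_0\varphi_0$, and the transitions are given by \eqref{eq:TransitionKernel}. Telescoping the $\varphi_0$ factors gives
\[
\lambda_0^{-(2N-1)}\,\psi_0(\vect z^{(-N+1)})\Bigl(\prod_{i} K_p(\vect z^{(i)},\vect z^{(i+1)})\Bigr)\varphi_0(\vect z^{(N)}).
\]
Next I expand $\prod_i K_p$ using \eqref{eq:K_general}. The half-weights $\tfrac12 V_p$ of adjacent kernels combine to a full $V_p$ for each interior block, so the product equals $\exp\bigl(-\sum_{\text{interior}} V_p-\sum W\bigr)$ times half-weights at the two end blocks. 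The key combinatorial check uses the choice $k=m-1$ together with the hard-core/range condition. The pair interactions $v(z_i+\cdots+z_{j})$ with nonzero contribution involve at most $m$ consecutive spacings, i.e.\ at most $k+1$. Such a run therefore lies within two consecutive blocks, and $V_p+W$ counts each of these terms exactly once; terms spanning more are zero because they exceed the range. Hence, up to boundary factors depending only on the two end blocks (including $\psi_0,\varphi_0$ and the halved $V_p$), the density is $\exp\bigl(-\sum v(\text{runs inside the window}) - p\sum z_j\bigr)$.

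Finally, I fix $\Gamma$ and pick $N$ so that $\Gamma$ lies at distance more than $2k$ from the window's ends. Then the end-block factors do not involve $(z_j)_{j\in\Gamma}$. Dividing the joint density by its integral over $z_\Gamma$, every factor not involving $z_\Gamma$ cancels, and so do all interaction runs disjoint from $\Gamma$. What remains is $\exp(-\mathscr H_\Gamma - p\sum_{\Gamma} z_j)/Q^{\vect z}_{\Gamma;p}$. Here runs meeting $\Gamma$ stay inside the window because of the finite range and the distance to the ends. This is exactly $\pi_{\Gamma;p}$; integration over $\R_+$ rather than $(r_\mathrm{hc},\infty)$ is harmless because the hard core kills the rest. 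Letting $N\to\infty$ and using the Lévy upward theorem, or simply noting that the conditional law given the larger $\sigma$-algebra is already measurable with respect to finitely many coordinates and consistent, yields the claim.

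The main obstacle is the bookkeeping in the middle step. One must verify that $\tfrac12V_p(\vect z)+W(\vect z;\vect z')+\tfrac12V_p(\vect z')$ chained over blocks reproduces each relevant run exactly once, in particular runs ending at the last spacing of a block and the definition of $\mathscr H_\Gamma$ with sums $z_i+\cdots+z_j$ versus $z_i+\cdots+z_{j-1}$. I would treat this carefully by indexing each run by its first and last spacing and checking the cases "inside one block" and "straddling two blocks" separately.
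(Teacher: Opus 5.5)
Your proof is correct, but it is organised differently from the paper's. The paper first invokes the cofinality remark (Georgii, Remark 1.24) to reduce to block-aligned intervals $\Gamma=\{1,\ldots,Nk\}$. For these, the Markov property of $(\tilde M_n)$ gives the conditional law of $(\tilde M_1,\ldots,\tilde M_N)$ given \emph{all} other blocks exactly, as the normalised bridge product $P(\tilde M_0,\vect m^{(1)})\cdots P(\vect m^{(N)},\tilde M_{N+1})$. Inserting \eqref{eq:TransitionKernel}, the $\varphi_0$ and $\lambda_0$ factors cancel against the normalisation and the Gibbs weight remains.

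You instead treat an arbitrary $\Gamma$ inside a finite window of whole blocks via the explicit stationary joint density. You keep $\Gamma$ far from the window's ends so that the end factors cancel, and then pass to the full outside $\sigma$-algebra by L\'evy's upward theorem. The paper's route is shorter and needs neither a limit nor boundary-distance bookkeeping, but it rests on the consistency of the specification. Yours is self-contained and handles non-aligned $\Gamma$ directly.

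Your limit step is in fact dispensable. By the Markov property of the block chain, conditioning on $(Z_j)_{j\in\Z\setminus\Gamma}$ is the same as conditioning on the spacings outside $\Gamma$ within the blocks meeting $\Gamma$ and one neighbouring block on each side. Your cancellation argument already works in that smaller window. Every non-vanishing run meeting $\Gamma$ lies in it and is counted in full, either in the $V_p$ of an interior block or in a $W$-term.

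One of your side remarks is false, though it does not affect your argument: DLR for a superset \emph{does} imply DLR for its subsets here. The kernels $\pi_{\Gamma;p}$ form a Gibbsian, hence consistent, specification, i.e.\ $\pi_{\Delta;p}\pi_{\Gamma;p}=\pi_{\Delta;p}$ for $\Gamma\subset\Delta$ (for admissible boundary conditions). So if the law $\mu$ of $(Z_n)$ satisfies $\mu\pi_{\Delta;p}=\mu$, then $\mu\pi_{\Gamma;p}=\mu\pi_{\Delta;p}\pi_{\Gamma;p}=\mu$. This is exactly why the paper may restrict to a cofinal family of intervals.
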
 

\begin{proof}
	It is enough to check the DLR conditions for a family of sets $\Gamma$ that is cofinal, meaning that every finite set is part of some set of the family, see Remark 1.24 in Georgii \cite{Georgii+2011}. 
    We choose discrete intervals whose length is a multiple of $k$. 
    By translational invariance, we need only consider intervals $\Gamma = \{1,\ldots, Nk\}$, $N\in \N$. 
    Write $P(x,\dd y) = P(x,y) \dd y$ (by some abuse of notation we use the same letter for the density and the kernel). 
    One easily checks that 
	\begin{multline*}
		\P\bigl( (\tilde M_1,\ldots, \tilde M_N) \in A\mid \tilde M_j, j \in \Z\setminus \{1,\ldots, N\}\bigr)  \\
		= \frac{1}{\mathrm{norm.}} \int \1_A(\vect m^{(1)},\ldots, \vect m^{(N)}) \\
			\times P(\tilde M_0,\vect m^{(1)}) P(\vect m^{(1)}, \vect m^{(2)})\cdots P(\vect m^{(N)}, \tilde M_{N+1}) \dd \vect m^{(1)}\cdots \dd \vect m^{(N)}\quad \text{a.s.}
	\end{multline*} 
	where the integral is over $((r_\mathrm{hc},\infty)^k)^{N}$ and the normalisation is given by the same integral but without the indicator $\1_A$. 
    We plug in the explicit formula~\eqref{eq:TransitionKernel} for the kernel $P$, take into account cancellations between the numerator and the denominator, and obtain the required DLR equations. 
\end{proof} 

Next, for $\Gamma = \{1,\ldots, N\}$, let us condition in addition on $Z_1+\cdots + Z_N$. 
Given $L> N r_\mathrm{hc}$, define a measure $\lambda_{N,L}$ on $\R_+^N$ by 
\[
	\lambda_{N,L}(A):= \int_{\R_+^{N-1}} \1_{[0,L]}(z'_1+\cdots + z'_{N-1}) \1_A\Bigl(z'_1,\ldots, z'_{N-1}, L - \sum_{i=1}^{N-1}z'_i\Bigr) \dd z'_1\cdots \dd z'_{N-1}. 
\] 
(think Lebesgue measure on the simplex $z'_1+\cdots + z'_N = L$, all $z'_i \geq 0$). 
Define 
\begin{equation}\label{eq:pihatdef}
	\widehat \pi_{N,L} (A\mid \vect z) := \frac{1}{\mathrm{norm.}} \int_{(r_\mathrm{hc},\infty)^N} 
	\1_A(\vect z') \exp\bigl(- \mathscr H_\Gamma({\vect z}'_\Gamma \vect z_{\Z\setminus \Gamma})\bigr) \lambda_{N,L}(\dd \vect z')
\end{equation}
with a normalisation factor defined by the same integral but without the indicator $\1_A(\vect z')$. 

\begin{lem} \label{lem:restricted}
	For $N\in \N$ and $L \geq N r_\mathrm{hc}$, 
	\[
		\P\bigl( (Z_1,\ldots, Z_N) \in A \mid (Z_j)_{j\in \Z\setminus \{1,\ldots, N\}}, Z_1+\cdots + Z_N\bigr) = \widehat \pi_{N,Z_1+\cdots + Z_N}\bigl( A \mid Z_j, j \in \Z\bigr)
	\] 
	almost surely. 
\end{lem}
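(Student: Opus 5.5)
The plan is to deduce the statement from the DLR equations of Lemma~\ref{lem:spacinggibbs} by a disintegration of the conditional law along the sum $S:=Z_1+\cdots+Z_N$. By Lemma~\ref{lem:spacinggibbs}, with $\Gamma=\{1,\ldots,N\}$, the conditional law of $(Z_1,\ldots,Z_N)$ given $\mathscr G:=\sigma(Z_j, j\notin\Gamma)$ is $\pi_{\Gamma;p}(\cdot\mid \vect z)$. This law has Lebesgue density on $(r_\mathrm{hc},\infty)^N$ proportional to $\exp(-\mathscr H_\Gamma(\vect z'_\Gamma\vect z_{\Z\setminus\Gamma}) - p\sum_i z'_i)$. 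Since $\sigma(\mathscr G, S)$ is generated by $\mathscr G$ together with a function of the $\Gamma$-variables, it is enough to compute the conditional law of $\vect z'$ given $\sum_i z'_i$ under this explicit finite-dimensional measure, with the boundary condition frozen. This is the usual tower argument: $\E[\1_A(Z_\Gamma) F(S) G]=\E[G\int \1_A(\vect z')F(\sum z'_i)\pi_{\Gamma;p}(\dd\vect z'\mid \vect Z)]$ for bounded $\mathscr G$-measurable $G$ and bounded measurable $F$.

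The key step is a change of variables $(z'_1,\ldots,z'_N)\mapsto(z'_1,\ldots,z'_{N-1},L)$ with $L=\sum_i z'_i$. Its Jacobian is $1$, so Lebesgue measure on $\R_+^N$ disintegrates as $\int_0^\infty \lambda_{N,L}(\dd\vect z')\,\dd L$, where $\lambda_{N,L}$ is the simplex measure defined before the lemma. Under this change the factor $\e^{-pL}$ depends only on $L$. Inserting this into the tower identity gives
\[
\int \1_A F\,\dd\pi_{\Gamma;p} = \frac{1}{Q^{\vect z}_{\Gamma;p}}\int_0^\infty F(L)\,\e^{-pL}\, \widehat Z_{N,L}(\vect z)\,\widehat\pi_{N,L}(A\mid\vect z)\,\dd L ,
\]
where $\widehat Z_{N,L}(\vect z)$ is the normalisation in \eqref{eq:pihatdef}. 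Taking $A$ to be the whole space shows that the conditional law of $S$ given $\mathscr G$ has density proportional to $\e^{-pL}\widehat Z_{N,L}(\vect z)$. Comparing the two expressions then identifies $\widehat\pi_{N,S}(A\mid\vect Z)$ as a version of $\P(Z_\Gamma\in A\mid \mathscr G, S)$. This works because a function of the form $\phi(\vect Z_{\Z\setminus\Gamma}, S)$ that integrates correctly against all products $G\,F(S)$ is, by a monotone class argument, the conditional expectation given $\sigma(\mathscr G,S)$.

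The main technical care lies in two places. First, the normalisation $\widehat Z_{N,L}(\vect z)$ must be strictly positive and finite, so that $\widehat\pi_{N,L}$ is well defined for almost every relevant $L$. Finiteness holds because $\lambda_{N,L}$ is a finite measure and $v$ is bounded below. Positivity needs $L>Nr_\mathrm{hc}$, together with the fact that on $(r_\mathrm{hc},\infty)^N$ the energy is finite: $v$ is finite on $(r_\mathrm{hc},R]$, and almost surely the boundary spacings also exceed $r_\mathrm{hc}$. Since $S>Nr_\mathrm{hc}$ almost surely, the set of problematic $L$ is a null set for the law of $S$, and at the endpoint $L=Nr_\mathrm{hc}$ we may define $\widehat\pi$ arbitrarily. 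Second, joint measurability of $(L,\vect z)\mapsto \widehat\pi_{N,L}(A\mid\vect z)$ must be checked; it follows from Fubini applied to the integral representation. Only finitely many boundary spacings enter $\mathscr H_\Gamma$, because of the finite range and the hard core, so no convergence issue arises in the energy.
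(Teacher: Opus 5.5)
Your proposal is correct and follows essentially the same route as the paper: condition first on $\sigma(Z_j, j\notin\{1,\ldots,N\})$, apply the DLR equation of Lemma~\ref{lem:spacinggibbs}, disintegrate Lebesgue measure into the simplex measures $\lambda_{N,L}\,\dd L$ so that $\e^{-pL}$ factors out, and identify $\widehat\pi_{N,S}(A\mid\cdot)$ as the conditional law by testing against products $G\,F(S)$. Your version additionally spells out what the paper leaves implicit: the positivity and finiteness of the normalisation for $L>Nr_\mathrm{hc}$, the null set $\{S=Nr_\mathrm{hc}\}$, joint measurability in $(L,\vect z)$, and the monotone class step.
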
 

\begin{proof}
Let $h\colon\R_+\to \R$ be a measurable bounded function and $\mathscr C_N = \sigma(Z_j, j \in \Z\setminus \{1,\ldots,N\})$. 
Further let $Y$ be a $\mathscr C_N$-measurable bounded function. 
Then 
\begin{align*}
	\E\bigl[ \1_{\{(Z_1,\ldots, Z_N)\in A\}} h(Z_1+\cdots+ Z_N) Y\bigr] 
	& =  \E\Bigl[ Y \E\bigl[ \1_{\{(Z_1,\ldots, Z_N)\in A\}} h(Z_1+\cdots+ Z_N) \mid \mathscr C_N \bigr]\Bigr]
\end{align*} 
By Lemma~\ref{lem:spacinggibbs} and an elementary change of variables, the conditional expectation on the right side is equal to
\begin{multline*}
	\frac{1}{\mathrm{norm.}}\int_{N r_\mathrm{hc}}^\infty \e^{-p L} h(L) \Biggl\{ \int_{(r_\mathrm{hc},\infty)^N} \1_A(\vect z') \exp\bigl( - \mathscr H_\Gamma(\vect z' \vect Z_{\Z\setminus \Gamma}) \bigr) \lambda_{N,L}(\dd \vect z') \Biggr\} \dd L 
\end{multline*} 
where the normalisation is the same integral but without the indicator $\1_A$. 
The inner integral on the right side is equal to 
\[
	\int_{(r_\mathrm{hc},\infty)^N} \widehat \pi_{N,L}(A\mid (Z_j)_{j\in \Z}\bigr) \exp\bigl( - \mathscr H_\Gamma(\vect z' \vect Z_{\Z\setminus \Gamma}) \bigr) \lambda_{N,L}(\dd \vect z').
\]
Backtracking our computations, we get 
\begin{multline*}
	\E\bigl[ \1_{\{(Z_1,\ldots, Z_N)\in A\}} h(Z_1+\cdots+ Z_N) Y\bigr] \\
	 =  \E\Bigl[ Y \E\bigl[ \widehat \pi_{N,Z_1+\cdots + Z_N}(A\mid (Z_j)_{j\in \Z}\bigr)  h(Z_1+\cdots+ Z_N) \mid \mathscr C_N \bigr]\Bigr] \\
	 = \E\Bigl[ \widehat \pi_{N,Z_1+\cdots + Z_N}(A\mid (Z_j)_{j\in \Z}\bigr)   h(Z_1+\cdots+ Z_N) Y \bigr].
\end{multline*} 
The lemma follows. 
\end{proof} 

Next. we express the kernel $\widehat \pi_{N+1,L}$ with an integral over absolute positions instead of spacings. 
For $n\in \N$, $x_1,\ldots, x_n \in \R$ and $\eta \in \mathbf N$, define
\[
    H(\delta_{x_1}+\cdots + \delta_{x_n} \mid \eta):= 
        \sum_{1\leq i < j \leq n} v(|x_i-x_j|) + \sum_{i=1}^n \int v(|x_i-y|) \eta(\dd y)
\]
if the integrals are absolutely convergent, and $\infty$ otherwise. 
Further set $H(0 \mid \eta):=0$. 

\begin{lem} \label{lem:canonical} 
	Let $N\in \N$, $L> N r_\mathrm{hc}$, and $(z_j)_{j\in \Z}\in \R_+^\Z$. Let $(x_j)_{j\in \Z}$ be the sequence defined by $x_0 =0$ and $x_j - x_{j-1} = z_j$. Define 
	$\Delta_{N,L} = \{(x'_1,\ldots,x'_{N})\colon\ 0 \leq x'_1 <\cdots < x'_N< L\}$. Then, assuming $x_{N+1} = z_1+\cdots + z_{N+1} =L$, we have 
	\begin{multline*}
		\widehat \pi_{N+1,L}(A \mid (z_j)_{j\in \Z} ) = \frac{1}{\mathrm{norm.}} \int_{\Delta_{N,L}} \1_A\bigl(x'_1,x'_2-x_1,\ldots,L-x'_{N}) \\
		\times \exp\Bigl( - H \Bigl( \delta_{x'_1}+\cdots + \delta_{x'_n} \Big| \sum_{j\in \Z\setminus \{1,\ldots, n\}} \delta_{x_j} \Bigr)\Bigr) \dd x'_1\cdots \dd x'_{N}. 
	\end{multline*}
\end{lem}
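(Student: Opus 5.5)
The identity is a change of variables from spacings to absolute positions, combined with an identification of the spacing energy $\mathscr H_\Gamma$ with the point-process energy $H$. Here $\Gamma=\{1,\ldots,N+1\}$. I will work with the defining formula \eqref{eq:pihatdef} for $\widehat\pi_{N+1,L}$ and transform numerator and normalisation in the same way, so that all constants cancel.

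\emph{Step 1 (the measure).} Under $\lambda_{N+1,L}$ the first $N$ spacings $z'_1,\ldots,z'_N$ are free, subject to $z'_1+\cdots+z'_N\le L$, and the last spacing is $z'_{N+1}=L-\sum_{i\le N} z'_i$. I substitute
\[
x'_j=z'_1+\cdots+z'_j, \qquad j=1,\ldots,N .
\]
This map is linear and triangular with unit diagonal, so its Jacobian is $1$. It maps the domain bijectively onto $\{0\le x'_1\le\cdots\le x'_N\le L\}$, which equals $\Delta_{N,L}$ up to a Lebesgue null set. The spacings are recovered as $(x'_1,\,x'_2-x'_1,\ldots,\,L-x'_N)$, which is the argument of $\1_A$ in the claim; the ``$x_1$'' printed there should read $x'_1$.

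The hard-core constraint $z'_i>r_{\mathrm{hc}}$ in \eqref{eq:pihatdef} needs no separate treatment. Whenever two points are closer than $r_{\mathrm{hc}}$, the factor $\exp(-H)$ vanishes, because $v\equiv\infty$ on $[0,r_{\mathrm{hc}}]$. So I may integrate over all of $\Delta_{N,L}$.

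\emph{Step 2 (the energy).} This is the main bookkeeping step. The boundary points are $x_j$ for $j\le 0$ and $j\ge N+1$; in particular $x_0=0$ and $x_{N+1}=L$, which is where the assumption $x_{N+1}=L$ is used. With the new positions $x'_1,\ldots,x'_N$, every sum of consecutive spacings $z_i+\cdots+z_j$ equals a distance $|y-y'|$ between two points $y<y'$ of the combined configuration. The discrete interval $\{i,\ldots,j\}$ meets $\Gamma$ exactly when that pair involves at least one interior point $x'_m$, or is a pair of boundary points straddling the block, meaning one point has index $\le 0$ and the other has index $\ge N+1$.

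So I decompose $\mathscr H_\Gamma = H(\cdot\mid\cdot)+\mathscr R$. Here $H$ collects the interior–interior and interior–boundary pairs, and $\mathscr R$ collects the straddling boundary–boundary pairs. Every straddling pair has both endpoints fixed: it consists of $x_i$ with $i\le 0$ and $x_j$ with $j\ge N+1$. Hence $\mathscr R$ depends only on $(z_j)_{j\notin\{1,\ldots,N+1\}}$ together with $L$. It is therefore constant in the integration variables and cancels against the normalisation.

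Finite range and the hard core guarantee that only finitely many terms are nonzero. This makes all sums absolutely convergent, so the regrouping is legitimate and $H$ coincides with its absolutely convergent definition.

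\emph{Step 3.} Combining Steps 1 and 2, and dividing numerator and denominator by the common factor $\e^{-\mathscr R}$, gives the stated formula. I expect Step 2 to be the only place needing care. The main risk is the index convention of the pair sum in $\mathscr H_\Gamma$: one must check that it counts each pair of points exactly once, with pairs not touching $\Gamma$ excluded. I would verify this by writing out the bijection between intervals $\{i,\ldots,j\}$ and point pairs $(x_{i-1},x_j)$.
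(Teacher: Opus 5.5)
Your proposal is correct and follows the paper's proof: you use the same change of variables $x'_j=z'_1+\cdots+z'_j$, and the same splitting of $\mathscr H_{\{1,\ldots,N+1\}}$ into $H(\cdot\mid\cdot)$ plus the straddling boundary--boundary sum $\sum_{i\le 0}\sum_{j\ge N+1}v(x_j-x_i)$, which cancels against the normalisation. Beyond the paper's terse argument, you also supply the unit Jacobian, absorb the constraint $z'_i>r_{\mathrm{hc}}$ into $\e^{-H}$ via the pinned points $x_0=0$ and $x_{N+1}=L$, and catch the typo $x_1\to x'_1$ in the indicator.
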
 

\begin{proof}
	In the integral~\eqref{eq:pihatdef} for $\Gamma = \{1,\ldots,N+1\}$, we change variables from $z'_1,\ldots, z'_N$ to $x'_j = z'_1+\cdots + z'_j$, $j=1,\ldots, N$. The energy in the Boltzmann factor in~\eqref{eq:pihatdef} contains all interactions with $x'_i$-particles, and in addition possibly interactions between $x\leq 0$ and $x\geq L$. 
	\begin{align*}
		& \mathscr H_{\{1,\ldots,N+1\}}(\vect z'_{\{1,\ldots, N+1\}} \vect z_{\Z\setminus \{1,\ldots, N+1\}}) \\
		&\quad = \sum_{1\leq i < j \leq n} v(x'_j - x'_i) + \sum_{i=1}^n \sum_{j\in \Z\setminus \{1,\ldots, n\}} v(|x'_i - x_j|) + \sum_{i \leq 0}\sum_{j\geq n+1} v(x_j- x_i) \\
		&\quad = H \Bigl( \delta_{x'_1}+\cdots + \delta_{x'_n} \Big| \sum_{j\in \Z\setminus \{1,\ldots, n\}} \delta_{x_j} \Bigr) + \sum_{i \leq 0}\sum_{j\geq n+1} v(x_j- x_i). 
	\end{align*} 
	We combine the change of variables with this formula, take into account cancellations between the integral in the numerator and the integral in the denominator (normalisation) in~\eqref{eq:pihatdef}, and obtain the lemma. 
\end{proof} 

Finally, we introduce finite-volume canonical Gibbs measures with boundary conditions. 
In the canonical ensemble both the number of particles and the system length are fixed. 
We do not assume that there are particles pinned at the system's ends. 
Let $N\in \N$, $\Lambda = (0,L)$ with $L > (N-1) r_\mathrm{hc}$, and $\eta \in \mathbf N$. 
The restriction of $\eta$ to $\R\setminus \Lambda$ is denoted $\eta_{\R\setminus \Lambda}$ (i.e.,\  $\eta_{\R\setminus \Lambda}(B) = \eta(B\setminus \Lambda)$). 
The canonical partition function is 
\[
    \mathcal Z^\eta_{\Lambda,N}:= 
    \frac{1}{N!}\int_{\Lambda^N} \exp\bigl(- H( \delta_{x_1}+\cdots + \delta_{x_N} \mid \eta_{\R\setminus \Lambda})\bigr) \dd x_1\cdots \dd x_N. 
\]
When the partition function is non-zero, the canonical Gibbs measure is a probability measure on $\mathbf N_\Lambda$, the space of counting measures supported in $\Lambda$, given by 
\begin{multline*}
	\mathbf P_{\Lambda,N}^\eta( A) \\
      := \frac{1}{\mathcal{Z}^\eta_{\Lambda,N}} \, \frac{1}{N!}\int_{\Lambda^N} \1_A(\delta_{x_1}+\cdots + \delta_{x_N}) \exp\bigl(- H( \delta_{x_1}+\cdots + \delta_{x_N} \mid \eta_{\R\setminus \Lambda})\bigr) \dd x_1\cdots \dd x_N. 
\end{multline*}
Whether the canonical partition function vanishes or not depends on the boundary condition $\eta$. 
For empty boundary conditions ($\eta_{\R\setminus \Lambda}=0$) the canonical partition function is non-zero if and only if $L > (N-1) r_\mathrm{hc}$; when $\eta$ contains particles pinned at $0$ at $L$, we need the stronger condition $L> N r_\mathrm{hc}$. 

For $N=0$ we define $\mathcal Z_{\Lambda,0}^\eta:=1$ and $\mathbf P_{\Lambda,0}^\eta:= \delta_0$, the Dirac measure concentrated on the empty configuration. 

\begin{lem} \label{lem:canonical2}
    Let $(Z_n)_{n\in \Z}$ be as in Lemma~\ref{lem:spacinggibbs} and $\Phi$ the associated translationally invariant point process. Then for all non-empty intervals $\Lambda =(0,L)$, $\mathcal Z_{\Lambda,\Phi(\Lambda)}^{\Phi_{\Lambda^\mathrm c}} >0$ almost surely and
	\[
	\P( \Phi_\Lambda \in A\mid \Phi_{\Lambda^\mathrm c}, \Phi(\Lambda))= \mathbf P_{\Lambda,\Phi(\Lambda)}^{\Phi_{\Lambda^\mathrm c}}( A) \quad \text{a.s.}
\] 
\end{lem}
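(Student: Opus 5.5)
We reduce the statement about the stationary process $\Phi$ to a statement about its Palm version $\Phi^0=\sum_n\delta_{X_n}$, where Lemmas~\ref{lem:restricted} and~\ref{lem:canonical} are available, and then transport it back with the Palm inversion formula. Both sides of the claimed identity are functionals of $\Phi$ that are jointly measurable in $(\Phi,\text{shift})$. By the Palm inversion formula
\[
\E[F(\Phi)] = \frac{1}{\E[Z_1]}\,\E\Bigl[\int_0^{Z_1} F(\theta_s \Phi^0)\,\dd s\Bigr],
\]
where $\theta_s$ is the shift by $s$, it suffices to verify the DLR-type identity
\[
\E\bigl[\1_A(\Phi_\Lambda)\, G(\Phi_{\Lambda^\mathrm c},\Phi(\Lambda))\bigr]=\E\bigl[\mathbf P^{\Phi_{\Lambda^\mathrm c}}_{\Lambda,\Phi(\Lambda)}(A)\, G(\Phi_{\Lambda^\mathrm c},\Phi(\Lambda))\bigr]
\]
for bounded measurable $G$. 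We also want the equivalent identity with $\Lambda$ replaced by every shifted window $\Lambda - s$, applied to $\Phi^0$.

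Fix $s$ and the window $(-s, L-s)$. On the event that the window contains exactly the points $X_{a+1},\ldots,X_{a+N}$, let $X_a$ and $X_{a+N+1}$ be the neighbouring points outside the window. Condition on all spacings except $Z_{a+1},\ldots,Z_{a+N+1}$, and on their sum $X_{a+N+1}-X_a$. By stationarity of $(Z_n)$ and Lemma~\ref{lem:restricted} (shifted by $a$), the conditional law of these $N+1$ spacings is $\widehat\pi_{N+1,X_{a+N+1}-X_a}$. Lemma~\ref{lem:canonical} rewrites this law as the canonical Gibbs law of $N$ points in the interval $(X_a,X_{a+N+1})$, with the outside configuration as boundary condition and two boundary particles pinned at the endpoints.

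The remaining step is to pass from the interval $(X_a,X_{a+N+1})$ with pinned endpoints to the fixed window $\Lambda$. Restrict the canonical measure on the larger interval to the event that exactly $N$ points lie in $\Lambda\subset (X_a,X_{a+N+1})$, and then condition further on $\Phi(\Lambda)=N$. The Boltzmann density factorises, so this conditioned measure is exactly $\mathbf P^{\eta}_{\Lambda,N}$, with $\eta$ the configuration outside $\Lambda$. Conditioning on the positions of the points outside $\Lambda$ in place of the spacings is legitimate because the region between $\partial\Lambda$ and the nearest outside points is empty by definition. Summing over $a\in\Z$, $N\ge0$ and integrating in $s$ gives the identity. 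The case $N=0$ is trivial. Positivity of $\mathcal Z$ follows because the observed configuration itself lies in the support of the integrand and the kernel is strictly positive off the hard core (Lemma~\ref{lem:K_properties}\ref{K_strict_positive}), which yields an open neighbourhood of positive mass.

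The main obstacle is this bookkeeping step: the index $a$ of the first point inside the window is random and depends on the spacings being resampled. We handle it by partitioning on the events $\{X_a\le -s<X_{a+1},\,X_{a+N}<L-s\le X_{a+N+1}\}$, each of which is measurable with respect to the conditioning $\sigma$-algebra in Lemma~\ref{lem:restricted} together with the resampled inner positions. The inner positions enter only through the indicator that they lie in $\Lambda$, and that indicator is absorbed into the canonical density.
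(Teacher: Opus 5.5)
Your strategy is the paper's. The inversion formula you quote is the paper's identity $\E[f(\Phi)]=\rho\int_{-\infty}^0\E[f(\sum_j\delta_{x+X_j})\1_{\{x+X_1>0\}}]\,\dd x$ with $x=-s$. From there both arguments condition on the outer spacings and the block sum, invoke Lemmas~\ref{lem:restricted} and~\ref{lem:canonical}, and restrict the pinned canonical law to the window. As written, however, the argument fails at the step you single out as the main obstacle, because of the sign of the window.

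\emph{Why the window $\Lambda-s$ fails.} Your formula holds for $\theta_s\Phi^0=\sum_n\delta_{X_n-s}$, and then $(\theta_s\Phi^0)(\Lambda)=\Phi^0(\Lambda+s)$, not $\Phi^0(\Lambda-s)$. With your window $(-s,L-s)$, the pinned Palm point $X_0=0$ lies inside the window whenever $0<s<L$. This also happens for the opposite shift $\sum_n\delta_{X_n+s}$, for which the formula would in any case need $\int_0^{Z_0}$. The left neighbour then has index $a<0$, and the outer positions $X_j=-(Z_{j+1}+\cdots+Z_0)$, $j\le a$, depend on the resampled spacings $Z_{a+1},\dots,Z_0$. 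So conditioning on the outer spacings and on $X_{a+N+1}-X_a$ does not fix $\Phi^0_{(\Lambda-s)^{\mathrm c}}$. Relative to $X_a$, the window constraint involves the inner point $X_0$, so it is not a restriction to a fixed interval. The intermediate identity you ask for is in fact false: the canonical DLR equation for $\Phi^0$ fails on any window $W\ni 0$. For $N=1$ the inner point is $0$ almost surely, whereas $\mathbf P^{\eta}_{W,1}$ is diffuse. Partitioning over $a$ cannot repair this, since the problem is identifying the conditional law, not measurability of the partition events. Also, the weight $\1_{\{s<Z_1\}}$ is then not a function of the outer configuration, which your reduction tacitly needs.

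\emph{The repair.} Take the window $\Lambda+s=(s,L+s)$ with $s\ge0$. Then $0$ is never inside, every left neighbour has index $a\ge0$, and $X_a=Z_1+\cdots+Z_a$ is a function of outer spacings. Your partition argument is then correct; it even gives the DLR equation for $\Phi^0$ on every window to the right of $0$. The weight $\1_{\{s<Z_1\}}=\1_{\{\Phi^0((0,s])=0\}}$ is outer-measurable, and on its support $a=0$, so the random index disappears altogether.

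This is how the paper proceeds. Work on the event $\{s<X_1,\ X_N<L+s\le X_{N+1}\}$. The factor $\1_{\{L+s\le X_{N+1}\}}$ and $G$ are measurable with respect to $\sigma(Z_j,\,j\le 0\text{ or }j\ge N+2;\,X_{N+1})$. The factor $\1_{\{s<X_1,\,X_N<L+s\}}$, which corresponds to the indicators in the paper's $Y_{x,n}$, restricts the pinned canonical law on $(0,X_{N+1})$ to $(s,L+s)^N$. This yields $\mathbf P^{\eta}_{\Lambda,N}(A)$ times a conditional probability, and backtracking gives the claim.

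Your positivity argument for $\mathcal Z$ via the observed configuration is fine. The relevant fact, though, is that $\e^{-H}>0$ whenever all gaps exceed $r_{\mathrm{hc}}$ (the potential is finite off the core and has finite range), not Lemma~\ref{lem:K_properties}.
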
 

\begin{proof} 
    Let $F,G\colon\mathbf N\to \R$ be bounded measurable functions   
    and $n\in \N_0$. On the event $\Phi(\Lambda)=n$, the points  and $y$ in $\Phi_{\Lambda^\mathrm c}$ with $x\leq 0 < L \leq y$ that are closest to the boundary of $\Lambda$ must satisfy $y-x > (n+1) r_\mathrm{hc}$, almost surely, and one easily checks $\mathcal Z_{\Lambda,n}^{\Phi_{\mathrm c}}>0$ almost surely.  
    Expected values with respect to $\mathbf P_{\Lambda,N}^{\Phi_{\Lambda^\mathrm c}}$ are denoted
    \[
        \langle F \rangle_{\Lambda,N}^{\Phi_{\Lambda^\mathrm  c}}:= \int_{\mathbf N_\Lambda} F \dd \mathbf P_{\Lambda,N}^{\Phi_{\Lambda^\mathrm  c}}.
    \]
    We check that 
    \begin{equation} \label{eq:dlr-canonical}
	\E\Bigl[ F(\Phi_\Lambda) G(\Phi_{\Lambda^\mathrm c}) \1_{\{ \Phi(\Lambda) =n\}}\Bigr] 
	  = \E \Bigl[ \langle F\rangle_{\Lambda,n}^{\Phi_{\Lambda^\mathrm c}} G(\Phi_{\Lambda^\mathrm c}) \1_{\{ \Phi(\Lambda) =n\}} \Bigr].
    \end{equation}
        For $n=0$ the statement is trivial.
        For $n\geq 1$, 
        we need to bring in the Palm distribution. 
        To that aim we notice, for bounded measurable $f\colon\mathbf N\to \R$, 
    \begin{align*}
    	\E\Bigl[ f(\Phi)\Bigr] & = \E\Bigl[ f(\Phi) \sum_{\substack{x\in \Phi\colon\\ x\leq 0}} \1_{\{\Phi((x,0))=0\}} \Bigr] \\
    	  & = \int_{-\infty}^0 \E\Bigl[ f\bigl(\sum_{j\in \Z}\delta_{x+X_j}\bigr)\1_{\{x+X_1> 0\}} \Bigr] \rho\, \dd x
    \end{align*}
    where $\rho= 1/\E_\pi[Z_1]$ is the intensity of $\Phi$ and $x$ represents the right-most point in  $\Phi$ with $x\leq 0$. 
    Thus
    \begin{align*}
    		\E\Bigl[ F(\Phi_\Lambda) G(\Phi_{\Lambda^\mathrm c}) \1_{\{ \Phi(\Lambda) =n\}}\Bigr] 
     & = \int_{-\infty}^0  \E\Bigl[ Y_{x,n} \1_{\{L \leq  x+ X_{n+1}\}}G\Bigl( \sum_{j\in \Z\setminus \{1,\ldots,n\}} \delta_{x+X_j}\Bigr) 
     \Bigr] \rho\, \dd x
    \end{align*} 
    where 
    \[
    	Y_{x,n} = \1_{\{0 < x+X_1\}} \1_{\{ x+X_n < L \}} F\Bigl( \sum_{j=1}^n \delta_{x+X_j}\Bigr). 
    \]
    We condition on $\sigma (X_j, j \in \Z\setminus \{1,\ldots, n+1\})$ (the points outside $(0,L)$). 
    That $\sigma$-algebra is also generated by $X_{n+1} = Z_1+\cdots + Z_{n+1}$ together with the spacings $Z_j$, $j\leq 0$ and $j \geq n+2$. 
    Revisiting the proof of Lemma~\ref{lem:canonical} (this time $x'_j = x+ z'_1+\cdots+ z'_j$) we find 
    \begin{multline*}
    	\E\bigl[ Y_{x,n}\mid X_j, j \in \Z\setminus \{1,\ldots, n+1\}\bigr] \\
    	 = \frac{1}{\mathrm{norm.}}\, \int_{\Delta_{N,L}} F\Bigl( \sum_{j=1}^n \delta_{x'_j}\Bigr) 
    	 \exp\Bigl( - H\Bigl( \sum_{j=1}^n \delta_{x'_j}\Big| \sum_{i\in \Z\setminus \{1,\ldots, n\}} \delta_{x+X_i}\Bigr) \Bigr) \dd x'_1\cdots \dd x'_n
    \end{multline*}
    with normalisation $\P(0 < x+ X_1, x+ X_n < L\mid X_j, j \in \Z \setminus \{1,\ldots,n+1\})$. 
    Thus, the conditional expectation of $Y_{x,n}$ is $\langle F\rangle_{n,\Lambda}^{\eta}$, with $\eta = \sum_{j\in \Z}\delta_{x+X_j}$, times a function that is measurable with respect to the $\sigma$-algebra that we condition on. 
    We may now backtrack in our computations and obtain Eq.~\eqref{eq:dlr-canonical}, for all $F,G,n$. 
    The lemma follows.
\end{proof} 

\begin{proof}[Proof of Theorem~\ref{thm:gibbs-markov}(a)]
    Lemma~\ref{lem:canonical2} says that the distribution of $\Phi$ is a canonical Gibbs state as defined by  Georgii~\cite{georgii1976canonical}. 
    The set $\mathfrak C = \mathfrak C(\beta)$ of canonical Gibbs measures is a convex set.  
    By Theorem 5.6 \cite{georgii1976canonical}, every extremal canonical Gibbs measure satisfies the DLR equations (equivalent to GNZ) for some activity $\zeta$. 
    For finite-range interactions with a hard core, it is known that for every activity $\zeta$, there is a unique Gibbs measure $\mathbf P^\zeta$. Therefore Remark 5.7 in Georgii \cite{georgii1976canonical} applies and yields that the law of $\Phi$ is a superposition
    \[
    	\P(\Phi\in A) = \int \mathsf P_{\zeta}(A)  \mu(\dd \zeta).
    \] 
    with $\mu$ a probability measure on $(0,\infty)$. 
    The Gibbs meausure $\mathsf P_\zeta$ for finite-range interactions with a hard core is known to be translationally invariant. 
    Suppose by contradiction that the measure $\mu$ is not a Dirac measure on a single activity $\zeta$. 
    Then $\mathscr L_\Phi$ is strictly convex combination of two translationally invariant measures, contradicting the ergodicity of the point process. Therefore, there exists a uniquely defined $\zeta$ such that the law of $\Phi$ is $\mathsf P_\zeta$, in particular it is Gibbs at activity $\zeta$. 
     
    Conversely, let $\zeta>0$ be an activity and $\rho = \rho(\zeta)> 1/r_\mathrm{hc}$ the intensity of the Gibbs point process at activity $\zeta$. 
    Let $g(p) = - \frac 1 k \log \lambda_0(p)$ with $\lambda_0(p)$ the principal eigenvalue of the transfer operator; it is also equal to the Gibbs free energy  
    \[
    	g(p) = - \lim_{n\to \infty} \frac  1 n \log Q_{n}(p)
    \] 
    with $Q_{n}(p)= Q_n(\beta,p)$ as in~\eqref{eq:qnp}.
    It is known that $p\mapsto g(p)$ is a strictly concave function that is continuously differentiable with $\ell(p) :=  \partial_p g(p)$ the expected spacing between consecutive points for the sequence $(Z_j)$ with transition kernel $P_p$ \cite{CassandroOlivieri, Dobrushin73, Dobrushin74, GallavottiMiracleSole}. 
    It is not too difficult to check that $\lim_{p\downarrow 0} \ell(p) = \infty$ and $\lim_{p\uparrow \infty}\ell(p) = r_\mathrm{hc}$. 
    Therefore, there exists a uniquely defined pressure $p>0$ so that $\ell(p) = 1/ \rho(\zeta)$. 
    The point process $\Phi^p$ with this pressure is a Gibbs point process with intensity $\rho(\zeta)$.
    As the map $\zeta\mapsto \rho(\zeta)$ is known to be injective \cite{DobrushinMinlos_ContiuityPressure}, it follows that $\Phi^p$ has law $\mathsf P_\zeta$.
    Geometric ergodicity of the chain of blocks of spacings has been proven in Lemma~\ref{lem:LJgeoErg} and part (a) of Theorem~\ref{thm:gibbs-markov} follows. 
\end{proof}

\subsection{Decay of correlations. Proof of Theorem~\ref{thm:gibbs-markov}(b)} \label{sec:gibbs-corr}
By part (a) of the theorem, we know that the stationary sequence of spacings is such that $\tilde M_n:= (Z_{(k-1)n+1},\ldots, Z_{nk})$, $n\in\Z$, is a stationary Markov chain with transition kernel $P_{\beta,p}$ for some $p>0$. For $k=1$, Theorem~\ref{thm:gibbs-markov} follows right away from Theorem~\ref{thm:NewMain} and the properties of the Markov chain checked in Section~\ref{sec:transfer} (with the minor change that $\R$ is replaced with $(r_\mathrm{hc},\infty)$).

For $k\geq 2$, Theorem~\ref{thm:NewMain} is not applicable right away because we modify the definition of the driving chain: Instead of the overlapping blocks of spacings $M_n=(Z_{n-k+1},\ldots, Z_n)$ from Assumption~\ref{ass:markov}, we look at the non-overlapping blocks $\tilde M_n = (Z_{nk-k+1},\ldots, Z_{kn})$, see Figure \ref{fig:non-overlapping_M}. 
However, the proofs of Theorem~\ref{thm:NewMain} and~\ref{thm:renewal} are easily adapted. 
First notice that, for every interval $I= [t-q,t]\subset (0,\infty)$, 
\[
    \E_\mu \Bigl[ \sum_{\ell=0}^\infty \1_{\{X_\ell \in [t-q,t] \}}\Bigr] 
     = \sum_{n=1}^\infty \E_\mu \Bigl[ \sum_{\ell = (n-1)k+1}^{nk} \1_{\{t-X_\ell \in [0,q] \} } \Bigr].
\]
The points $X_{nk-\ell}$, $\ell = 0,\ldots, k-1$ are reconstructed as $X_{nk-\ell} = X_{nk} - \sum_{j=1}^\ell Z_{nk-\ell}$ where $\sum_{j=1}^0 z_j:=0$. 
Define 
\[
    g((z_1,\ldots,z_k), x):= \sum_{\ell=0}^{k-1} \1_{[0,q]}\Bigl( x +z_k + z_{k-1}+ \cdots + z_\ell\Bigr)
\]
and $\tilde X_n:= X_{kn}$, then 
\[
    \E_\mu \Bigl[ \sum_{\ell=0}^\infty \1_{\{X_\ell \in [t-q,t] \}}\Bigr]  = \sum_{n=1}^\infty \E_\mu\Bigl[  \tilde g(\tilde M_n,t - \tilde X_{n}) \Bigr]
\]
and the increment $\tilde Z_n = \tilde X_n - \tilde X_{n-1} = Z_{nk-k+1}+\cdots + Z_{kn}$ is a function of $\tilde M_n = (Z_{nk-k+1},\ldots, Z_{nk})$. 
This is similar to Eq.~\eqref{eq:umurewrite} for the renewal measure of the Markov random walk, with $k$ instead of $r$. For $\vect z = (z_1,\ldots,z_k) \in (r_\mathrm{hc},\infty)^k$ and Borel sets $A\subset (r_\mathrm{hc},\infty)^k$, $B\subset \R_+$, define
\[
    \tilde K(\vect z, A\times B)
    := \int \1_A(z'_1,\ldots, z'_k) \1_B( z'_1+\cdots+ z'_k) P_{\beta,p}\bigl( \vect z , \dd z'_1\times \cdots \times \dd z'_k).
\]
We may assume without loss of generality that $\tilde K$ satisfies the minorisation condition~\eqref{eq:spread_out_minor} for $r=1$ (otherwise, replace $k$ with $kr$). We have already observed that the Markov random walk $(\tilde M_n, \tilde X_n)$ is spread out. 
The driving chain $(\tilde M_n)$ is geometrically ergodic by Lemma~\ref{lem:LJgeoErg} and Assumption~\ref{MAssu:ExpMoment} holds true by Lemma~\ref{lem:ExpMomIncrLJ}.
The arguments from Section~\ref{sec:genproof} apply and yield 
\[
    \sum_{n=1}^\infty \E_\pi\Bigl[  \tilde g(\tilde M_n,t - \tilde X_{n}) \Bigr]
     = \Or(\e^{- \eps t})
\]
as $t\to \infty$, for some $\eps>0$  that does not depend on the precise interval $[0,q]$. The proof of the exponential decay of two-point functions is then completed by arguments similar to Section~\ref{sec:NewMainProof}.  \qedhere

\begin{figure}
    \centering
    \begin{tikzpicture}[x=0.8\linewidth,y=5cm,>=Stealth]

        \draw[->] (0.1,0.7) -- (0.9,0.7);
    
        \foreach \PP in {-3,-2,-1,1,2,3}
            \draw (0.5+\PP*0.1,0.72) -- (0.5+0.1*\PP,0.68) node[below=2pt] {$X_{\PP}$};
    
        \draw (0.5,0.72) -- (0.5,0.68) node[below=2pt] {$X_{0}=0$};
    
        \foreach \PP in {-2,0,-1,1,2,3}
            \draw[->] (0.405+\PP*0.1,0.77) -- (0.495+\PP*0.1,0.77) node[midway,above] {$Z_{\PP}$};

        \node at (0.15,0.77) {$\cdots$};
        \node at (0.85,0.77) {$\cdots$};

        \draw[decorate, decoration={brace,amplitude=10pt}] (0.5,0.85)--(0.69,0.85) node[xshift=-10pt,above=10pt] {$\tilde M_{1}=(Z_{1},Z_{2})$};
        \draw[decorate, decoration={brace,amplitude=10pt}] (0.3,0.85)--(0.49,0.85) node[xshift=-30pt,above=10pt] {$\tilde M_{0}=(Z_{-1},Z_{0})$};
        
    \end{tikzpicture}
    \caption{An example configuration of the Markov random walk with $(\tilde M_n)_n = \bigl((Z_{nk-k+1},\ldots, Z_{kn})\bigr)_{n}$, where $k=2$. Notice that there is no overlap between $(\tilde M_n)$ with different $n$.} 
    \label{fig:non-overlapping_M}
\end{figure}

\section{Application to harmonic chains}\label{sec:AR1} 

Here we prove Theorem \ref{thm:harmonic}.
In Section \ref{sec:harmonic(a)} we identify the law of the translation invariant point process $\Phi$ as the limit of the Gibbs measures $\lim_{x_0\to-\infty} \lim_{n\to\infty} \mu_{n,\beta}^{x_0}$ when tested against bounded local functions.
In Section \ref{sec:harmonic(b)} the analysis in Section \ref{sec:StatMech} is adapted to show decay of correlations.

\subsection{Identification of the limit. Proof of Theorem \ref{thm:harmonic}(a)} \label{sec:harmonic(a)}
We first show that the inner limit $\lim_{n \to \infty} \int F \dd\mu_{n, \beta}^{x_0}$ converges to $\mathbb{E}_\zeta [F(\Phi^+ + x_0)]$ using specific mixing-type inequality for $P_\beta$, where $\Phi^+$ consists of points in $\Phi^0$ with non-negative indices, $\Phi^+ = \sum_{i \in \mathbb{N}_0} \delta_{x_i}$, i.e.\ it is the point process corresponding to the one-sided Markov chain $(Z_n)_{n \ge 0}$.
This is done in Proposition \ref{prop:inner_limit}.
The outer limit follows from a result of Berbee \cite[Theorem 6.3.1]{Berbee}.

Let $P_\beta$ be defined as in \eqref{eq:pharmonic}, and $\pi$ its invariant measure.
Let $(Z_n)_{n>0}$ be a Markov chain with transition kernel $P_\beta$, $(X_n)_{n\geq 0}$ be defined as $X_n - X_{n-1} = Z_n$ for $n\geq 1$ and $X_0 = 0$, $\Phi^+ = \sum_{i \geq 0} \delta_{X_i}$. 
Define
\begin{equation*}
    G_\beta(z) = \exp\Bigl( \frac{\beta}{2}(c - \frac{k_1}{2})(z-a)^2\Bigr), \quad \zeta = \mathcal{N}\Bigl(a, \bigl(\beta(c + \frac{k_1}{2})\bigr)^{-1}\Bigr).
\end{equation*}
with $c$ given in~\eqref{eq:cgammadef}. With $\Theta_{x_0}\Phi^+$ we denote the shifted measure $\sum_{i \geq 0} \delta_{x_0+X_i}$.

\begin{prop}
    \label{prop:inner_limit}
    For all $x_0<0$ and bounded, local measurable $F\colon \mathbf{N} \rightarrow \R$, we have 
    \begin{equation*}
        \lim_{n\rightarrow\infty} \int_{\R^n} F(\delta_{x_0} + \dots + \delta_{x_n}) \dd \mu_{n,\beta}^{x_0} = \E_{\zeta}[F(\Theta_{x_0}\Phi^+ )],
    \end{equation*}
    where $\P_{\zeta}$ denote the distribution of the Markov chain $(Z_n)_{n\geq 0}$ with $Z_0 \sim \zeta$.
\end{prop}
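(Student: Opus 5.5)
The plan is to rewrite $\mu_{n,\beta}^{x_0}$ in spacing variables as a Doob-transformed Gaussian chain and then use geometric ergodicity of $P_\beta$ to remove the effect of the free right end as $n\to\infty$.

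\textbf{Spacing variables and Doob transform.} Set $z_i=x_i-x_{i-1}$ and $w_i=z_i-a$. In these variables the energy~\eqref{eq:eharmonic} is
\[
U_n^{x_0}=\sum_{i=1}^n \tfrac{k_1}{2}w_i^2+\sum_{i=2}^n\tfrac{k_2}{2}(w_i+w_{i-1})^2 .
\]
It does not depend on $x_0$, so under $\mu_{n,\beta}^{x_0}$ the spacings $(z_1,\dots,z_n)$ have a law independent of $x_0$, and the points are $x_0+z_1+\dots+z_j$. I split the single-site terms symmetrically and introduce the Gaussian transfer kernel
\[
K(w,w')=\exp\bigl(-\beta[\tfrac{k_1}{4}w^2+\tfrac{k_2}{2}(w+w')^2+\tfrac{k_1}{4}w'^2]\bigr).
\]
The density of $(w_1,\dots,w_n)$ is then proportional to $e^{-\beta k_1 w_1^2/4}\prod_{i=1}^{n-1}K(w_i,w_{i+1})\,e^{-\beta k_1 w_n^2/4}$. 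Next I compute the principal eigenfunction $\varphi(w)=\exp(-\beta\alpha w^2/2)$ of $K$ with eigenvalue $\lambda$ by a Gaussian integral. The fixed-point equation for $\alpha$ is quadratic, and its solution should involve $c$ from~\eqref{eq:cgammadef}. I then check that the Doob transform $\frac{K(w,w')\varphi(w')}{\lambda\varphi(w)}$ is exactly $P_\beta$ from~\eqref{eq:pharmonic}, with mean $-\tfrac{k_2}{\gamma}w$ and variance $1/(\beta\gamma)$.

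\textbf{Finite-$n$ marginals.} For fixed $m<n$, integrating out $w_{m+1},\dots,w_n$ gives the density of $(w_1,\dots,w_m)$:
\[
\frac{1}{\mathcal Z_n}\,e^{-\beta k_1w_1^2/4}\varphi(w_1)\prod_{i=1}^{m-1}P_\beta(w_i,w_{i+1})\;h_{n-m}(w_m),
\]
where
\[
h_\ell(w):=\E_w\bigl[e^{-\beta k_1 W_\ell^2/4}/\varphi(W_\ell)\bigr]=\E_w\bigl[G_\beta(Z_\ell)^{-1}\cdot(\text{const})\bigr].
\]
Here the ratio $e^{-\beta k_1 w^2/4}/\varphi(w)$ is, up to a constant, $1/G_\beta$ (this fixes $\alpha$ against $c-\tfrac{k_1}{2}$). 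The first factor $e^{-\beta k_1w_1^2/4}\varphi(w_1)$ is, after normalisation, the density of $\zeta=\mathcal N\bigl(a,(\beta(c+k_1/2))^{-1}\bigr)$. Since $P_\beta$ is an AR(1) kernel with contraction factor $k_2/\gamma<1$ and Gaussian stationary law $\pi$, I get $h_\ell(w)\to\int (1/G_\beta)\,\dd\pi$ as $\ell\to\infty$. This limit is finite because $\pi$ has precision $2\beta c>\beta(c-k_1/2)$. The convergence is locally uniform in $w$ with a Gaussian-growth dominating bound, which I can check by explicit Gaussian computation, or via geometric ergodicity with a weighted norm. The same limit controls $\mathcal Z_n$. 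By dominated convergence, for each fixed $m$ the law of $(z_1,\dots,z_m)$ under $\mu_{n,\beta}^{x_0}$ converges in total variation to the law of $(Z_1,\dots,Z_m)$ under $\P_\zeta$.

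\textbf{Passing to local functionals; main obstacle.} Let $F$ be local with region $\Lambda$ bounded. Because spacings can be negative, the points $x_0+X_j$ with $j>m$ could in principle fall into $\Lambda$, so $F$ is not yet a function of finitely many spacings. I expect this to be the main obstacle. I would first show
\[
\sup_n \mu_{n,\beta}^{x_0}\bigl(\exists j>m: x_j\in\Lambda\bigr)\to 0 \quad (m\to\infty),
\]
and the analogous statement under $\P_\zeta$. The tool is a union bound $\sum_{j>m}\P(X_j\le \sup\Lambda-x_0)$ together with Gaussian tails of $X_j$. Under both laws $X_j$ is Gaussian with mean of order $aj$ and variance $O(j)$: under $\P_\zeta$ this follows from the AR(1) structure, and under $\mu_{n,\beta}^{x_0}$ from the marginal formula above combined with the uniform bounds on $h_\ell$. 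On the complementary event, $F$ equals a bounded function of $(z_1,\dots,z_m)$, so the total variation convergence from the previous step finishes the proof.
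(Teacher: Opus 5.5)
Your plan is correct and has the same architecture as the paper's proof. You Doob-transform the Gaussian transfer kernel into $P_\beta$, so that the left end produces $\zeta$ and the right end a boundary weight. You then prove convergence of fixed-size marginals, and finish with an $\epsilon/3$ truncation using Gaussian tails of $X_j$ under both $\mu_{n,\beta}^{x_0}$ and $\P_\zeta$ (the paper's terms (A)--(C)). The one genuine difference is the middle step. The paper rewrites everything under $\P_\pi$ with weights $G_\beta(Z_1)G_\beta(Z_n)$ and decorrelates via $G_\beta\in L^2(\pi)$ and the $L^2(\pi)$ spectral gap (Lemma~\ref{lem:mixing_ineq_G}). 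You instead show $h_\ell\to\int G_\beta\,\dd\pi$ pointwise with a dominating bound. Your route is more explicit and gives total-variation convergence of the marginals directly. The paper's needs only the spectral gap and is less tied to exact Gaussian formulas.

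Two points need correcting.

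(i) The right-end weight $e^{-\beta k_1 w^2/4}/\varphi(w)$, with $\varphi(w)=e^{-\beta c w^2/2}$, is exactly $G_\beta$, not $1/G_\beta$. It is unbounded because $c^2=k_1^2/4+k_1k_2>k_1^2/4$. Your finiteness check ($2\beta c>\beta(c-\frac{k_1}{2})$) is in fact the check for $G_\beta$. This unboundedness is why total-variation ergodicity alone does not suffice. The explicit computation you allude to does close the step. One has $h_\ell(w)=C_\ell e^{a_\ell w^2/2}$ with $a_0=\beta(c-\frac{k_1}{2})$, $a_{\ell+1}=\rho^2 a_\ell/(1-a_\ell/(\beta\gamma))$ and $\rho=k_2/\gamma$. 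This map has fixed points $0$ (attracting) and $2\beta c$ (repelling), and $a_0<2\beta c$. Hence $a_\ell\downarrow 0$ geometrically, $C_\ell$ converges, and $h_\ell\le C\,G_\beta$ for all $\ell$. Moreover $G_\beta(Z_m)\in L^1(\P_\zeta)$ by Cauchy--Schwarz with $G_\beta\in L^2(\pi)$, so dominated convergence applies.

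(ii) The uniform-in-$n$ tail bound under $\mu_{n,\beta}^{x_0}$ is your thinnest point. For $j$ close to $n$ the weight $h_{n-j}$ is essentially $G_\beta$. Bounds on $h_\ell$ therefore only turn the tail of $X_j$ into a tail under a $G_\beta(Z_j)$-tilted $\P_\zeta$, a measure of the same type as $\mu_{j,\beta}^{x_0}$. To close this you would need an extra H\"older step, using that $\E_\zeta[G_\beta(Z_j)^2]$ stays bounded for large $j$ since $2\beta(c-\frac{k_1}{2})<2\beta c$. The paper's argument is simpler and you should adopt it. Since $\beta U_n\ge\frac{\beta k_1}{2}\sum_i w_i^2$, the covariance of the centred Gaussian vector $(w_1,\dots,w_n)$ satisfies $\Sigma_n\le(\beta k_1)^{-1}I_n$. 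Hence $\mathrm{Var}(X_j)\le j/(\beta k_1)$ uniformly in $n$, and the mean of $X_j$ is exactly $ja$ by the symmetry $w\mapsto -w$.
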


To prove this proposition, we need the following two lemmas.
The first lemma rewrites the expectation value with respect to $\mu_{n,\beta}^{x_0}$ as finite-dimensional marginal of the Markov chain $(Z_n)_{n\in \N_0}$ with $\zeta$ as initial distribution.

\begin{lem}
    \label{lem:Gibbs_to_MC}
    For every measurable function $F\colon \mathbf{N} \rightarrow \R$,
    \begin{equation}
        \label{eq:E_zeta}
        \int_{\mathbb{R}^n} F(\delta_{x_0} + \dots + \delta_{x_n}) \, \dd \mu_{n,\beta}^{x_0}(x_1, \dots, x_n) = \frac{1}{\E_{\zeta}[G_\beta(Z_n)]} \E_{\zeta,\beta} \left[ F\left(\sum_{i=0}^n \delta_{x_0+X_i} \right) G_\beta(Z_n) \right]
    \end{equation}
\end{lem}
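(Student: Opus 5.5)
The plan is a direct computation. Change variables from positions to spacings, write both sides as explicit Gaussian densities in the centred spacings, and check that the exponents agree up to a constant. Since $x_0$ is fixed, the map $(x_1,\dots,x_n)\mapsto (z_1,\dots,z_n)$ with $z_i=x_i-x_{i-1}$ has unit Jacobian. The configuration $\delta_{x_0}+\cdots+\delta_{x_n}$ equals $\sum_{i=0}^n\delta_{x_0+X_i}$ with $X_i=z_1+\cdots+z_i$. It therefore suffices to show that the density of $\mu_{n,\beta}^{x_0}$ in the spacing variables is proportional to the density of the first $n$ spacings of the chain, multiplied by $G_\beta$ of the last spacing. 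The normalisation $\E_\zeta[G_\beta(Z_n)]$ is then forced, by taking $F\equiv 1$.

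With $w_i:=z_i-a$, the energy \eqref{eq:eharmonic} becomes
\[
U_n=\tfrac12\sum_{i=1}^n k_1 w_i^2+\tfrac12\sum_{i=2}^n k_2 (w_i+w_{i-1})^2 .
\]
Its quadratic form has diagonal coefficient $k_1+k_2$ at the two ends $i=1$ and $i=n$, diagonal coefficient $k_1+2k_2$ in the interior, and off-diagonal term $2k_2w_{i-1}w_i$. On the chain side, the exponent of $P_\beta(z,z')$ in \eqref{eq:pharmonic} expands as $-\tfrac{\beta}{2}\bigl[\gamma w'^2+2k_2ww'+\tfrac{k_2^2}{\gamma}w^2\bigr]$. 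The key algebraic identity comes from \eqref{eq:cgammadef}: $\gamma$ solves $\gamma^2-(k_1+2k_2)\gamma+k_2^2=0$, hence $k_2^2/\gamma=k_1+2k_2-\gamma=\tfrac{k_1}{2}+k_2-c$.

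Multiplying the initial density $\zeta$ by the $n-1$ transition densities along the spacings, I will read off the coefficients term by term. Every cross term is $2k_2w_{i-1}w_i$, as required. Each interior diagonal coefficient is $\gamma+k_2^2/\gamma=k_1+2k_2$. The first spacing gets $(c+\tfrac{k_1}{2})+k_2^2/\gamma=k_1+k_2$, from $\zeta$ and the outgoing transition. The last spacing gets $\gamma$ from its incoming transition, and the factor $G_\beta$ contributes $-(c-\tfrac{k_1}{2})$, giving $\gamma-c+\tfrac{k_1}2=k_1+k_2$. All these coefficients match $U_n$, so the joint density equals $\exp(-\beta U_n)$ times a constant (the Gaussian prefactors), and the lemma follows.

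The main obstacle is bookkeeping of the indexing convention, and I expect this step to need care. The identity works exactly when the initial law $\zeta$ is the law of the \emph{first} spacing $x_1-x_0$. If instead an extra variable $Z_0\sim\zeta$ preceded $Z_1$ and were integrated out, the boundary coefficient of $w_1$ would come out wrong: integrating out $w_0$ adds $+k_2^2/(k_1+k_2)$ rather than the needed $-k_2$ correction. So I will fix the convention that the chain is started with its first step distributed as $\zeta$ and that the $n$ spacings of the configuration are the first $n$ states of the chain, with $G_\beta$ evaluated at the last one; this is what \eqref{eq:E_zeta} means. I will also note that integrability of $G_\beta(Z_n)$ under $\P_\zeta$ is automatic from the computation, since the product equals the integrable density $\exp(-\beta U_n)$.
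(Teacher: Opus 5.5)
Your proposal is correct and is essentially the paper's argument. The paper also shows that, in the spacing variables, $\exp(-\beta U_n^{x_0})$ is proportional to the $\zeta$-density at $z_1$ times $\prod_{i=1}^{n-1}P_\beta(z_i,z_{i+1})$ times $G_\beta(z_n)$; it packages your coefficient matching as a Doob transform of the symmetric kernel $K_\beta$ by its ground state $\varphi_\beta(z)=\exp(-\frac12\beta c(z-a)^2)$, whose leftover boundary factors $\exp(-\frac14\beta k_1(z_1-a)^2)\,\varphi_\beta(z_1)$ and $\exp(-\frac14\beta k_1(z_n-a)^2)/\varphi_\beta(z_n)$ are exactly the $\zeta$-density and $G_\beta$. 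Your indexing convention, with the first spacing $x_1-x_0$ being $\zeta$-distributed, is indeed the one this computation and Remark~\ref{rem:Gibbs_MC_pi} use, despite the phrase ``$Z_0\sim\zeta$'' in Proposition~\ref{prop:inner_limit}.
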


\begin{proof}
    The energy~\eqref{eq:eharmonic} can be rewritten with the spacings $z_i = x_i - x_{i-1}$ as 
    \[
        \sum_{i=1}^n \frac 12 k_1 (z_i-a)^2 + \sum_{i=1}^{n-1} \frac 12 k_2 (z_i + z_{i+1} - 2 a)^2.
    \]
    Let 
    \begin{equation}    \label{eq:Transfer_Gaussian}    
        K_\beta(z,z') = \exp \Bigl( - \frac \beta 4 k_1(z-a)^2 - \frac \beta 2 k_2(z+z'-2a)^2 - \frac \beta 4 k_1(z'-a)^2\Bigr).
    \end{equation}
    The Boltzmann weight $\exp(- \beta U_n^{x_0})$ is equal to 
    \[
        \e^{- \beta k_1(z_1-a)^2/4}  \Biggl(  \prod_{i=1}^{n-1} K_\beta(z_i,z_{i+1})\Biggr) \, \e^{- \beta k_1(z_n-a)^2/4}. 
    \]
    The integral operator with kernel $K_\beta$ is a variant of the \emph{Kac harmonic operator} see e.g.\ Helffer \cite[Chapter 5.2]{helffer}. It can be diagonalised explicitly. In particular, the principal eigenfunction is $\varphi_\beta(z) = \exp( - \frac 12 \beta c (z-a)^2)$ and the principal eigenvalue is $\lambda(\beta)= (2\pi/(\beta \gamma))^{1/2}$ with $c$ and $\gamma$ given in~\eqref{eq:cgammadef}. An explicit computation yields 
    \[
        \frac{1}{\lambda(\beta)} \frac{1}{\varphi_\beta(z)} K_\beta(z,z') \varphi_\beta(z') = \sqrt{\frac{\beta \gamma}{2\pi}} \exp\Bigl( - \frac 12 \beta \gamma \Bigl(z'-a + \frac{k_2}{\gamma}(z-a)\Bigr)^2  \Bigr) .
    \]
    Let us denote the right side, by some abuse of notation, $P_\beta(z,z')$ (that is, we use the same letter as for the kernel $P_\beta(z,B)$ in~\eqref{eq:pharmonic}). The Boltzmann weight $\exp(- \beta U_n^{x_0})$ becomes 
    \begin{equation*}
        \lambda(\beta)^{n-1} \e^{- \beta k_1 (z_1-a)^2/4} \, \varphi_\beta(z_1) \Biggl(\prod_{i=1}^{n-1} P_\beta(z_i,z_{i+1})\Biggr) \e^{- \beta k_1(z_n-a)^2/4} \, \frac{1}{\varphi_\beta(z_n)}
    \end{equation*}
    which is proportional to 
    \[
        \exp\Bigl( - \frac \beta 2 \Bigl(\frac {k_1}2 + c) (z_1-a)^2 \Bigr)\Bigr)\, \Biggl(\prod_{i=1}^{n-1} P_\beta(z_i,z_{i+1})\Biggr) G_\beta(z_n).
    \]
    and the proof of the lemma is easily concluded. 
\end{proof}

\begin{rem}
    \label{rem:Gibbs_MC_pi} 
    The invariant measure $\pi$ has a probability density function proportional to $\varphi_\beta(z)^2 = \exp( - \beta c(z-a)^2)$, therefore, for  every bounded measurable function $f\colon \mathbb{R}^N \to \mathbb{R}$,
    \begin{equation}
        \label{eq:E_pi}
        \mathbb{E}_\zeta [f(Z_1,\ldots,Z_n)] = \frac{\mathbb{E}_\pi [G_\beta(Z_1) f(Z_1,\ldots,Z_n) ]}{\mathbb{E}_\pi [G_\beta(Z_1)]}.
    \end{equation}
\end{rem}

The second lemma is the aforementioned mixing-type inequality. It exploits that $P_\beta$ has a spectral gap in $L^2(\pi)$ and that $G_\beta$, though unbounded, is square-integrable with respect to $\pi$.

\begin{lem}
    \label{lem:mixing_ineq_G}
    Let $k<n,\,f\colon \mathbb{R}^k \to \mathbb{R}$ be a bounded measurable function, then there exist $C, C' > 0$, such that
    \begin{equation*}
        \bigl|\mathbb{E}_\pi[f(Z_1, \dots, Z_k) G_\beta(Z_1) G_\beta(U_n)] - \mathbb{E}_\pi[f(Z_1, \dots, Z_k) G_\beta(Z_1)]\, \mathbb{E}_\pi[G_\beta(Z_n)] \bigr| \leq C \e^{-C'(n-k)}.
    \end{equation*}
\end{lem}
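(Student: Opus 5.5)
We read $G_\beta(U_n)$ as $G_\beta(Z_n)$ and exploit the explicit Gaussian AR(1) structure from Remark~\ref{rem:AR1}. Under $\P_\pi$ the chain $(Z_n)$ is stationary and reversible with Gaussian marginal $\pi=\mathcal N(a,1/(2\beta c))$. The kernel $P_\beta$ is self-adjoint on $L^2(\pi)$. Its spectrum on $L^2(\pi)$ is the Hermite (Mehler) spectrum $\{\rho^j\colon j\in\N_0\}$ with $\rho=-k_2/\gamma$, and $|\rho|<1$ because $\gamma>k_2$. Hence on the orthogonal complement of constants we have
\[
\norm{P_\beta^m g-\pi(g)}_{L^2(\pi)}\leq |\rho|^m\norm{g-\pi(g)}_{L^2(\pi)} .
\]

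Before using this we check that $G_\beta\in L^2(\pi)$. The density of $\pi$ is proportional to $\exp(-\beta c(z-a)^2)$, while $G_\beta^2=\exp(\beta(c-k_1/2)(z-a)^2)$. Their product is proportional to $\exp(-\beta\frac{k_1}{2}(z-a)^2)$, which is integrable. Similarly, $G_\beta\in L^1(\pi)$.

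Next we condition on $\F_k=\sigma(Z_1,\ldots,Z_k)$. By the Markov property,
\[
\E_\pi[G_\beta(Z_n)\mid\F_k]=(P_\beta^{n-k}G_\beta)(Z_k).
\]
Writing $h:=P_\beta^{n-k}G_\beta-\pi(G_\beta)$, the quantity to bound equals $\E_\pi[f(Z_1,\ldots,Z_k)G_\beta(Z_1)\,h(Z_k)]$, where we used $\E_\pi[G_\beta(Z_n)]=\pi(G_\beta)$. By Cauchy--Schwarz this is at most
\[
\norm{f}_\infty\,\E_\pi[G_\beta(Z_1)^2]^{1/2}\,\E_\pi[h(Z_k)^2]^{1/2}.
\]
Stationarity gives $\E_\pi[G_\beta(Z_1)^2]=\norm{G_\beta}_{L^2(\pi)}^2$ and $\E_\pi[h(Z_k)^2]=\norm{h}_{L^2(\pi)}^2$. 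The spectral gap estimate above then yields $\norm{h}_{L^2(\pi)}\leq |\rho|^{n-k}\norm{G_\beta-\pi(G_\beta)}_{L^2(\pi)}$. Altogether,
\[
C=\norm{f}_\infty\norm{G_\beta}_{L^2(\pi)}^2,\qquad C'=-\log|\rho| .
\]
When $k_2/\gamma=0$ the bound is trivially true for every $C'$.

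The only step needing care is the spectral gap. The cleanest way to verify it is to note that, in the variable $(z-a)\sqrt{2\beta c}$, the kernel is the Mehler kernel. Indeed, the AR(1) recursion $Z_{n+1}-a=\rho(Z_n-a)+\xi_n/\sqrt{\beta\gamma}$ maps the Hermite polynomial $H_j$ (orthogonal for $\pi$) to $\rho^jH_j$. Stationarity of $\pi$ requires $1/(2\beta c)=\rho^2/(2\beta c)+1/(\beta\gamma)$, which reduces to $\gamma^2-k_2^2=2c\gamma$. This identity follows directly from~\eqref{eq:cgammadef}. Completeness of the Hermite polynomials in $L^2(\pi)$ then gives the gap $1-|\rho|$. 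Alternatively, one can cite the fact that a reversible geometrically ergodic chain has an $L^2(\pi)$ spectral gap.
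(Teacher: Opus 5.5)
Your proof is correct and follows the same argument as the paper: $G_\beta\in L^2(\pi)$, the Markov property at time $k$, Cauchy--Schwarz, and an $L^2(\pi)$ spectral gap for $P_\beta$, leading to the same constant $C=\|f\|_\infty\|G_\beta\|_{L^2(\pi)}^2$. The only difference is that you prove the gap by the explicit Hermite/Mehler diagonalisation, which gives the sharp rate $|\rho|=k_2/\gamma$, whereas the paper appeals to Krein--Rutman (via unitary equivalence with the transfer operator) or to reversibility plus geometric ergodicity; incidentally, your value $\exp(-\beta\frac{k_1}{2}(z-a)^2)$ for $G_\beta^2$ times the density is the correct one, while the paper writes $k_1/4$.
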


\begin{proof} 
The function $G_\beta$ is unbounded because $c>k_1/2$, however it is square-integrable with respect to the stationary measure $\pi$ of $P_\beta$. Indeed the stationary measure has probability density proportional to $\varphi_\beta(z)^2 = \exp( - \beta c (z-a)^2)$ with $\varphi_\beta$ the principal eigenfunction of the transfer operator with kernel~\eqref{eq:Transfer_Gaussian}. Thus $G_\beta(z)^2 \varphi_\beta(z)^2= \exp( - \beta k_1 (z-a)^2/4)$ and it follows that $G_\beta\in L^2(\pi)$. 

In addition, $P_\beta$ has a spectral gap in $L^2(\pi)$, i.e., there exists $\delta\in (0,1)$ so that for every $f\in L^2(\pi)$ with $\int f\dd \pi =0$, $|| P f||_{L^2(\pi)} \leq (1-\delta) ||f||_{L^2(\pi)}$. This can be justified in two ways: The analytic argument is that by the Krein-Rutman theorem, the transfer operator has a spectral gap (and it is self-adjoint) in $L^2(\mathrm{Leb})$  with $\mathrm{Leb}$ the Lebesgue measure (this is similar to the proof of Lemma~6.2). The operator $P_\beta$ in $L^2(\pi)$ is unitarily equivalent to the transfer operator and therefore inherits the spectral gap. 

A direct probabilistic argument is that $P_\beta$ is geometrically ergodic because it is autoregressive with Gaussian noise, see Meyn and Tweedie \cite[Chapter 15.2.2]{MeynTweedie}. In addition, one easily checks that $\pi$ is a reversible measure. For reversible chains, geometric ergodicity is equivalent to the existence of a spectral gap in $L^2(\pi)$ \cite[Theorem 1, xxx)]{GLR}  and the statement follows.

To conclude, let $\bar G_\beta = G_\beta - \int G_\beta \dd \pi$. 
By the Markov property, we have
\begin{align*}
&|\mathbb{E}_\pi[f(Z_1, \dots, Z_k) G_\beta(Z_1) G_\beta(Z_n)] - \mathbb{E}_\pi[f(Z_1, \dots, Z_k) G_\beta(Z_1)]\, \mathbb{E}_\pi[G_\beta(Z_n)]| \\
&\quad = \Bigl| \E_\pi \bigl[ f(Z_1, \dots, Z_k) G_\beta(Z_1) \E_{Z_k} [ \bar G_\beta(Z_{n-k}) ] \bigr] \Bigr| \\
&\quad \leq \norm{f}_{\infty} \E_\pi \bigl[ \bigl| G_\beta(Z_1) P_\beta^{n-k} \bar G_\beta(Z_{k})  \bigr|\bigr]\\
&\quad \leq \norm{f}_{\infty} \norm{G_\beta}_{L^2(\pi)}\norm{P_\beta^{n-k} \bar G_\beta}_{L^2(\pi)} \\
&\quad \leq  \norm{f}_{\infty} \norm{G_\beta}_{L^2(\pi)}^ 2 (1-\delta)^{n-k} . \qedhere
\end{align*}
\end{proof}

We are now ready to prove Proposition \ref{prop:inner_limit}.

\begin{proof}[Proof of Proposition \ref{prop:inner_limit}]
Let us write $F_k = F(\sum_{i=0}^k \delta_{x_i})$ as the truncated version of $F$. 
We apply an $\epsilon/3$ argument.  For all $k\leq n$,
\begin{align*}
& \Bigl| \int F_n \, \dd\mu_{n, \beta}^{x_0} - \E_{\zeta} \left[ F ( \Theta_{x_0} \Phi^+  ) \right] \Bigr|
\leq  \underbrace{\Bigl| \int F_n \, \dd\mu_{n, \beta}^{x_0} - \int F_k \, \dd\mu_{n, \beta}^{x_0}\Bigr|}_\textrm{(A)} \\
& \quad + \underbrace{\Biggl|\int F_k \, \dd\mu_{n, \beta}^{x_0} - \E_{\zeta} \left[ F \left( \sum_{i=0}^k \delta_{X_i + x_0} \right) \right]\Biggr| }_\textrm{(B)} 
+ \underbrace{\Biggl|\E_{\beta} \left[ F \left( \sum_{i=0}^k \delta_{X_i + x_0} \right) \right] - \E_{\zeta} \left[ F ( \Theta_{x_0} \Phi^+  ) \right]\Biggr|}_\textrm{(C)}.
\end{align*}
(Remember $\Theta_{x_0} \Phi^+ = \sum_{i\geq 0}\delta_{x_0+ X_i}$.)
We now treat the the terms $(A)$, $(B)$ and $(C)$  separately. To lighten notation we suppress the $\beta$-dependence from the notation. 
\medskip 

\noindent \textbf{Term (B):} By Lemma \ref{lem:Gibbs_to_MC} and Remark \ref{rem:Gibbs_MC_pi}, we have
\begin{align*}
\int F_k \, \dd\mu_{n}^{x_0} &= \frac{\E_{\zeta} \left[ F \left( \sum_{i=0}^k \delta_{X_i + x_0} \right) G(Z_n) \right]}{\E_{\zeta} \left[ G(Z_n) \right]} = \frac{\E_{\pi} \left[ F \left( \sum_{i=0}^k \delta_{X_i + x_0} \right) G(Z_n) G(Z_1) \right]}{\E_{\pi} \left[ G(Z_n) G(Z_1) \right]}.
\end{align*}
By Lemma \ref{lem:mixing_ineq_G} this converges, as $n$ goes to infinity, to
\begin{align*}
\frac{\E_\pi  \left[ F \left( \sum_{i=0}^k \delta_{X_i + x_0} \right) G(Z_1) \right]}{\E_{\pi} \left[ G(Z_1) \right]} = \E_{\zeta} \left[ F \left( \sum_{i=0}^k \delta_{X_i + x_0} \right) \right].
\end{align*}
Therefore, for every fixed $k$, the term (B) goes to zero as $k\to \infty$. 
\medskip 

\noindent \textbf{Term (C):}
As $F$ is local, there exists a bounded interval $\Gamma = [-M,M]$ so that $F$ depends only on the points in $\Gamma$, i.e., $F(\sum_{i\geq 0} \delta_{x_i}) = F(\sum_{i\geq 0} \1_\Gamma(x_i) \delta_{x_i})$. In particular, $F(\sum_{i\geq 0} \delta_{x_i}) = F(\delta_{x_1}+\cdots +\delta_{x_k})$ whenever $\{x_i\colon i \geq k+1\}$ is contained in $\Gamma^\mathrm c$. As a consequence, 
\begin{align} \label{eq:C1}
 \mathrm{(C)}   
     & \leq 2 ||F||_\infty \P_\zeta \Bigl( \exists i \geq k+1\colon X_i +x_0 \in \Gamma\Bigr)
      \leq 2 \norm{F}_\infty \sum_{i=k+1}^\infty \mathbb P_{\zeta}(X_i + x_0 \leq M). 
\end{align}
Each $X_i$ is a Gaussian random variable with mean $ia$. With the Gaussian tail bound $\P(|\xi| \geq z ) \leq 2 \exp(-z^2/2)$ for $\xi\sim\mathcal N(0,1)$ and $z>0$, we get 
\begin{equation} \label{eq:Ctailbound}
    \P_\zeta (X_i + x_0 \leq M) = \P_\zeta( X_i - i a \leq M - x_0 - ia) \leq 2 \exp\Bigl( - \frac{(ia - M + x_0 )^2} {2\mathrm{Var}_\zeta(X_i)}\Bigr)
\end{equation}
for all $i$ that are sufficiently large so that $M-x_0-ia<0$.
The variance of $X_i$ grows linearly with $n$. Indeed, by Remark~\ref{rem:AR1}, we may assume $Z_0\sim \zeta$ and $Z_{n+1} - a= \rho(Z_n-a) + \sigma \xi_n$ with $\rho = - k_2/\gamma \in (-1,0)$, $\sigma^2 = (\beta\gamma)^{-1}$, and $\xi_1,\xi_2,\ldots$ independent standard normal variables also independent from $Z_0$. It follows that $X_n = Z_1+\cdots + Z_n$ satisfies 
\[
    X_n - na  = \frac{1-\rho^{n}}{1-\rho} \rho (Z_0-a) + \sum_{k=1}^n \frac{1-\rho^{n-k+1}}{1-\rho} \, \sigma \xi_k.
\]
Therefore, 
\[
    \mathrm{Var}_\zeta(X_n) = 
     \Bigl(\frac{1-\rho^n}{1-\rho}\, \rho\Bigr)^ 2 \mathrm{Var}_\zeta(Z_0) + \sum_{k=1}^n \Bigl( \frac{1- \rho^{n-k+1}}{1-\rho}\Bigr)^2 \sigma^2.  
\]
On the right side we may bound $(1- \rho^{n-k+1})^2 \leq 4$ (remember $|\rho| < 1$) and we deduce that $\mathrm{Var}_\zeta(X_n) \leq b n$ for some $b>0$ and all $n\in \N$. Turning back to~\eqref{eq:Ctailbound} we find that at fixed $M$ and $x_0$, the probability that $X_i \leq M - x_0$ goes to zero exponentially fast as $i\to \infty$ and then plugging into~\eqref{eq:C1} we find that the term (C) goes to zero as $k \to\infty$. 

\medskip 

\noindent \textbf{Term (A):} The argument is similar to the argument for (C), again we exploit that the underlying measures are Gaussian and we bound variances. To this end, we observe that with $(X_1+x_0,\dots,X_n+x_0) \sim \mu_{n,\beta}^{x_0}$, the vector of spacings $(Z_1,\ldots,Z_n)$ is a random vector with multivariate normal distribution. The mean is $(a,\ldots, a)$ and the covariance matrix $\Sigma_n$ satisfies 
\[
    \frac 12 \langle \vect z', \Sigma_n^{-1} \vect z'\rangle = \sum_{i=1}^n \frac \beta 2 k_1 (z'_i)^2 + \sum_{i=1}^{n-1} \frac \beta 2 k_2 (z'_i + z'_{i+1})^2 \geq \sum_{i=1}^n \frac \beta 2 k_1 (z'_i)^2
\]
for all $\vect z'\in \R^n$, with $\langle \cdot,\cdot \rangle$ the standard scalar product on $\R^n$. It follows that $\Sigma_n^{-1} \geq \beta k_1 I_n$, with $I_n$ the $n\times n$ identity matrix, and then, for all $\vect y \in \R^n$, 
\[
   \langle \vect y, I_n \vect y\rangle =  \langle \Sigma_n^{1/2} \vect y, \Sigma_n^{-1}\Sigma_n^{1/2}  \vect y\rangle \geq \beta k_1 \langle \Sigma_n^{1/2} \vect y, \Sigma_n^{1/2} \vect y\rangle = \beta k_1 \langle \vect y, \Sigma_n \vect y\rangle. 
\]
hence 
\[
    \Sigma_n \leq \frac{1}{\beta k_1} I_n.
\]
Therefore, for every linear function of the spacings, the variance under $\mu_n^{x_0}$ is bounded by the variance it would have for independent normal spacings $Z_i \sim \mathcal N(a,(\beta k_1)^{-1})$. In particular, the variance of $X_i$ under $\mu_n^{x_0}$ is bounded by $i /(\beta k_1)$. Thus, the variance of $X_i$ grows at most linearly with $i$ and the bound is uniform in $n$. 

Armed with this bound we deduce, using Gaussian tail bounds and arguments similar to part (C), that part (A) goes to zero as $k\to \infty$, uniformly in $n$. 
\medskip 

To conclude, given $\eps>0$, first pick $k$ large enough so that (A) and (C) are smaller than $\eps/3$, for all $n$, and then for all $n$ large enough part (B) is smaller than $\eps/3$. This completes the proof of Proposition \ref{prop:inner_limit}.
\end{proof}

We now turn our attention to the outer limit.

\begin{prop}
    \label{prop:outer_limit}
    For any bounded local measurable function $F\colon \mathbf{N} \to \mathbb{R}$,
    \begin{equation*}
        \lim_{x_0\to-\infty} \mathbb{E}_{\zeta, \beta}[F(\Theta_{x_0}\Phi^+)] = \mathbb{E}_{\beta}[F(\Phi)].
    \end{equation*}
\end{prop}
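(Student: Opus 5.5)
Since $F$ is local, say depending only on points in $\Gamma=[-M,M]$, the plan is to use the Palm inversion formula to express $\E_\beta[F(\Phi)]$ through the stationary spacing sequence, and then to compare the one-sided walk started far to the left with that representation.

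\textbf{Step 1: Palm inversion.} The Palm inversion formula, used already in the proof of Lemma~\ref{lem:canonical2} and valid here by reversibility via Berbee's framework, gives
\[
\E[F(\Phi)] = \rho\int_{-\infty}^0 \E_\pi\Bigl[F\Bigl(\sum_{j\in\Z}\delta_{x+X_j}\Bigr)\1_{\{x+X_1>0\}}\Bigr]\dd x ,
\]
where the rightmost point $x\le 0$ need not be the $0$-th point when spacings may be negative. For spacings of either sign I would use Berbee's version instead: $\E[F(\Phi)]$ equals the limit of time averages.

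\textbf{Step 2: reduction to a renewal-type statement.} Write $\E_\zeta[F(\Theta_{x_0}\Phi^+)] = \E_\zeta[\tilde F(\Theta_{x_0}\Phi^+)]$. Only indices $i$ with $x_0+X_i\in\Gamma$ matter, and since $X_i\approx ia$ these are $i\approx |x_0|/a$. Let $N=\inf\{i\colon x_0+X_i>-M-L\}$ for a large buffer $L$. The strong Markov property at $N$ reduces the problem to the joint law of the overshoot $x_0+X_N$ and of $Z_N$.

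\textbf{Step 3: convergence of the overshoot.} Apply Theorem~\ref{thm:renewal}, or rather the underlying Markov renewal theorem for the Markov random walk $(Z_n,X_n)$. This applies because the AR(1) chain is geometrically ergodic, spread out, and has Gaussian exponential moments; $\zeta$ is absolutely continuous with respect to $\pi$ with square-integrable density $G_\beta/\E_\pi G_\beta$, which replaces the initial law $\pi$ or $\nu$ by $\zeta$. The renewal theorem then yields that the law of the configuration seen from level $-x_0$ converges to its stationary (Palm-inverted) version. Equivalently, by Berbee \cite[Theorem 6.3.1]{Berbee}, for a random walk with stationary ergodic spread-out increments the point process seen from a far point converges in total variation on bounded windows to the stationary $\Phi$, provided the initial law is absolutely continuous with respect to the Palm law. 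Since $\P_\zeta\ll\P_\pi$ on path space (with density $G_\beta(Z_0)/\E_\pi G_\beta$), I would invoke this directly.

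\textbf{Step 4: left-boundary effect.} Points with negative index, present in $\Phi$ but absent from $\Phi^+$, must be shown to hit $\Gamma$ with vanishing probability. A reversed walk started at the far-right point near $-x_0$ has Gaussian variance $O(i)$, as in term (C), so the probability that any $X_{-i}$ returns to $\Gamma$ is summable. The same estimate controls the points of $\Phi^+$ with small index, which stay near $x_0\to-\infty$.

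\textbf{Main obstacle.} The hard step is Step 3. Berbee's theorem requires a coupling/spread-out hypothesis and an absolute-continuity condition for the initial law, and these have to be verified. If invoking it is not clean, I would prove the total-variation convergence on $\Gamma$ directly from the regeneration structure of Section~\ref{sec:splitMX}. The delay before the first regeneration has exponential moments by~\eqref{eq:mwtail}, after adapting that bound to $\zeta$ by Cauchy–Schwarz with $G_\beta\in L^2(\pi)$. The spread-out embedded walk then couples with its stationary version via Stone's theorem.
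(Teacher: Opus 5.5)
Your Step~3, in the form you say you would actually use, is exactly the paper's argument: Berbee's \cite[Theorem 6.3.1]{Berbee} for the Palm-started walk, transferred to the initial law $\zeta$ through $\zeta\ll\pi$ (density $G_\beta/\E_\pi G_\beta$). The paper shifts $F$ so that it depends only on points in $\R_+$, checks weak Bernoulli and Berbee's condition (5.1.3) via his Theorem~6.4.5 for aperiodic positive Harris chains, obtains total-variation convergence for $\pi$-a.e.\ starting state, and integrates against $\zeta$ by dominated convergence. Berbee's statement already compares $\Theta_{x_0}\Phi^+|_{\R_+}$ with $\Phi|_{\R_+}$ in total variation, so your Steps~1, 2 and~4 are not needed. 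Also, Theorem~\ref{thm:renewal} alone would not have given convergence of the configuration, since it only controls the first moment measure.
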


\noindent 
The proof is based on \cite[Theorem 6.3.1]{Berbee}.
\begin{proof}
    Let us fix $\beta>0$ and drop it from our notation.
    Let $\Gamma$ be a bounded interval so that $F$ depends only on the points in $\Gamma$, see term $(C)$ in the proof of \ref{prop:inner_limit}.
    Without loss of generality, we may assume that $\Gamma$ is a subset of $\R_+$.
    Indeed, any local function $F$ can be shifted such that this is true: there exists a finite $s_F \geq 0$ such that $F\circ \Theta_{-s_F}$ depends only on points in $\{x+s_F \mid x \in \Gamma\} \subset \R_+$.
    This finite shift does not affect the limit in the left-hand-side and $\Phi$ in the right-hand-side is translation invariant.
    Hence it suffices to look at local functions with support in $\R_+$.

    Denote $\mathbf{N}_+$ the space of locally finite counting measures on $\R_+$ with a corresponding $\sigma$-algebra $\mathscr{M}_+ = \sigma(\bigl\{\{ \eta\in\mathbf{N}_+\, \colon \eta(B)=k \} \vert B\in \B_{\R_+}, k\in \N_0\bigr\})$.
    We observe that the Markov chain $(Z_n)_{n\geq 0}$ is ergodic, has strictly positive drift and is spread out, see Section \ref{sec:harmonic(b)}.
    Moreover, this implies that it is also aperiodic and positive Harris recurrent \cite[Chapter 13]{MeynTweedie}.
    Other conditions needed to apply \cite[Theorem 6.3.1]{Berbee}, i.e.\ $(Z_n)_{n\in\Z}$ is weakly Bernoulli and satisfies \cite[(5.1.3)]{Berbee}, are verified in the proof of \cite[Theorem 6.4.5]{Berbee} for aperiodic, positive Harris chains.
    Hence, we can apply \cite[Theorem 6.3.1]{Berbee} and it follows that for $\pi$-a.a.\ $m$,
    \begin{equation*}
        \lim_{t\to-\infty} \sup_{A\in\mathscr M_+} \bigl\vert\P_m(\Theta_{t}\Phi^+\vert_{\R_+} \in A ) - \P(\Phi\vert_{\R_+} \in A ) \bigr\vert =0,
    \end{equation*}
    where $\eta\vert_{\R_+} := \eta(\cdot \cap \R_+)$.
    Equivalently, we have for $\pi$-a.a.\ $m$,
    \begin{equation*}
        \lim_{t\to-\infty}\sup_{\substack{G\colon \mathbf{N}_+ \to \R\, \text{meas.}\\ \norm{G}_{\infty} \leq 1}} \Bigl| \E_m[G(\Theta_{t}\Phi^+\vert_{\R_+})] - \E[G(\Phi\vert_{\R_+})] \Bigr| = 0.
    \end{equation*}
    For $F$ local with support $\Gamma \subset \R_+$, we have $F(\eta) = F(\eta\vert_{\R_+})$.
    We may take $G\colon\mathbf{N}_+\to \R, G(\eta)=F(\eta)/\norm{F}_\infty$, where measurability of $G$ with respect to $\mathscr M_+$ follows from locality of $F$.

    The proposition now follows from the dominated convergence theorem (or uniform integrability of $\E_m[F(\Theta_{t}\Phi^+)]$).
\end{proof}

\subsection{Decay of correlations. Proof of Theorem \ref{thm:harmonic}(b)} \label{sec:harmonic(b)} 
To prove Theorem \ref{thm:harmonic}(b), we note that by Remark \ref{rem:AR1}, the corresponding Markov chain is an autoregressive process with Gaussian noise.
It follows that it is Harris recurrent and geometrically ergodic, see Meyn and Tweedie \cite[Chapter 15.5]{MeynTweedie}.
Examining \eqref{eq:pharmonic} reveals that it is absolutely continuous with respect to the Lebesgue measure, and it has positive drift $a$.
Thus, to apply Theorem \ref{thm:NewMain}, we only need to check the Assumption \ref{MAssu:ExpMoment}.

To this end, we adapt the proof of Lemma \ref{lem:ExpMomIncrLJ}.
Recall the transfer operator $\mathcal{K}_\beta$ on $L^2(\R)$ with integral kernel \eqref{eq:Transfer_Gaussian}.
It is easy to see that this operator is (strongly) positive and Hilbert-Schmidt.

Although there is no pressure term, the analysis is almost the same: writing $\varphi_{0;\beta}$ and $\lambda_0(\beta)$ for the principal eigenfunction and eigenvalue, respectively, we have
\begin{align}
    &\E_\pi[\e^{\theta |X_{n}|}] = \int  \e^{\theta |\sum_{i=1}^n z_i|} \varphi_{0;\beta}(z_1) \left( \prod_{i=1}^{n-1} \frac{1}{\lambda_0(\beta)} K_\beta (z_i, z_{i+1}) \right) \varphi_{0;\beta}(z_n) \dd z_1 \dd z_2\cdots \dd z_n \nonumber \\
    & \quad \leq \int  \e^{\theta \sum_{i=1}^n |z_i| + \theta\sum_{i=2}^{n-1} |z_i|} \varphi_{0;\beta}(z_1) \left( \prod_{i=1}^{n-1} \frac{1}{\lambda_0(\beta)} K_\beta (z_i, z_{i+1}) \right) \varphi_{0;\beta}(z_n) \dd z_1 \dd z_2\cdots \dd z_n \nonumber \\
    & \quad = \int \varphi_{0;\beta}(z_1) \left( \prod_{i=1}^{n-1} \frac{1}{\lambda_0(\beta)} K_{\beta;\theta} (z_i, z_{i+1}) \right) \varphi_{0;\beta}(z_n) \dd z_1 \dd z_2 \cdots \dd z_n,
\end{align}
where $K_{\beta;\theta} (x,y)= K_\beta(x,y) + \exp(\theta(|x|+|y|))$.
For sufficiently small $\theta>0$, the operator $K_\theta$ is strongly positive and Hilbert-Schmidt, hence we can conclude, as in the last section, that Assumption~\ref{MAssu:ExpMoment} holds.

Alternatively, we could use the autoregressive nature of the process.
Denote $\rho_{\text{AR}} = -\frac{k_2}{\gamma}$ and $\tilde \xi_k = \frac{1}{\sqrt{\beta \gamma}} \xi_k$.
By recursively applying the autoregressive relation \eqref{eq:AR1}, we have the identity
\begin{equation}
    X_n = \frac{1-\rho_{\text{AR}}^{n}}{1-\rho_{\text{AR}}} \rho_{\text{AR}} (Z_0-a) + \sum_{k=1}^{n} \frac{1-\rho_{\text{AR}}^{n-k+1}}{1-\rho_{\text{AR}}} \tilde\xi_k + na. 
\end{equation}
Thus,
\begin{align*}
    &\E_\pi \bigl[\e^{\epsilon \abs{X_n}}\bigr] = \E_\pi \left[ \exp\left(\epsilon \left| \frac{1-\rho_{\text{AR}}^{n}}{1-\rho_{\text{AR}}} \rho_{\text{AR}} (Z_0-a) + \sum_{k=1}^n \frac{1-\rho_{\text{AR}}^{n-k+1}}{1-\rho_{\text{AR}}}\tilde\xi_k + na \right|\right) \right] \\
    & \qquad\leq \E_\pi \left[ \exp\left(\epsilon \left| \frac{1-\rho_{\text{AR}}^{n}}{1-\rho_{\text{AR}}}\right| |Z_0-a| + \sum_{k=1}^n \left|\frac{1-\rho_{\text{AR}}^{n-k+1}}{1-\rho_{\text{AR}}}\right| |\tilde\xi_k| + na\right) \right] \\
    & \qquad\leq \e^{\epsilon na}\E_\pi \left[\exp\left(\frac{\epsilon}{1-|\rho_{\text{AR}}|}\abs{Z_0-a}\right)\right]\E_\pi \left[\exp\left(\frac{\epsilon}{1-|\rho_{\text{AR}}|}\abs{\tilde\xi_1}\right)\right]^n,
\end{align*}
where we have used
\begin{equation*}
    \left|\sum_{i=0}^{n-1} \rho_{\text{AR}}^i\right| \leq \sum_{i=0}^{n-1} |\rho_{\text{AR}}|^i \leq \frac{1}{1-|\rho_{\text{AR}}|}.
\end{equation*}
We can conclude by the fact that both $\abs{Z_0-a}$ and $\abs{\tilde\xi_1}$ are distributed with folded normal distribution (i.e., the law of the absolute value of a normal variable, see \cite{FoldedNormal1961}), and thus their moment generating function exists in a neighbourhood around the origin. 
\qed

\bigskip
\subsection*{Acknowledgements.} 
This work is supported by the Deutsche Forschungsgemeinschaft (DFG, German Research Foundation) – TRR 352 – project number 470903074. S.J.\ thanks Gerold Alsmeyer, Sascha Kissel and Elena Pulvirenti for helpful discussions on renewal theory, canonical Gibbs measures, and geometric ergodicity for Markov chains defined through transfer operators.

\bibliography{references.bib}{}
\bibliographystyle{amsplain}

\end{document}